\newcommand{\bibliofont}{\footnotesize}
\theoremstyle{definition}
\newtheorem{defn}{Definition}[section]
\newtheorem{remark}[defn]{Remark}
\newtheorem{examp}[defn]{Example}
\newtheorem{question}{Question}[section]
\theoremstyle{theorem}
\newtheorem{prop}[defn]{Proposition}
\newtheorem{cor}[defn]{Corollary}
\newtheorem{thm}[defn]{Theorem}
\newtheorem*{thm*}{Theorem}
\newtheorem*{lem*}{Lemma}
\newtheorem{lem}[defn]{Lemma}
\def\Z{\mathbb{Z}}
\def\wt{\widetilde}
\def\eps{\varepsilon}
\def\ra{\rightarrow}
\def\CP2{\mathbb{CP}^2}
\newcommand{\bigslant}[2]{{\raisebox{.2em}{$#1$}\left/\raisebox{-.2em}{$#2$}\right.}}
\begin{document}
\extrafloats{100}

\begin{center}\Large{Diagrams of $\star$-trisections}\\
\large{Jos\'e Rom\'an Aranda and Jesse Moeller}
\\
{August 2020}
\end{center}
\begin{abstract} 
In this note we provide a generalization for the definition of a trisection of a 4-manifold with boundary. We demonstrate the utility of this more general definition by finding a trisection diagram for the Cacime Surface, and also by finding a trisection-theoretic way to perform logarithmic surgery. In addition, we describe how to perform 1-surgery on closed trisections. The insight gained from this description leads us to the classification of an infinite family of genus three trisections. 
We include an appendix where we extend two classic results for relative trisections for the case when the trisection surface is closed.
\end{abstract}

\tableofcontents

\section{Introduction}
\label{section_intro}
In \cite{trisecting_four_mans}, Gay and Kirby proved that every closed smooth 4-manifold admits a trisection. 
A trisection of a closed 4-manifold $X$ is a decomposition $X=X_1\cup X_2\cup X_3$ into three 4-dimensional 1-handlebodies so that the pairwise intersections are 3-dimensional 1-handlebodies $X_i\cap X_j$, and the triple intersection is a closed surface $\Sigma=X_1\cap X_2\cap X_3$. 
The genus of the trisection is defined as the genus of $\Sigma$.
In recent years, the notion of trisections has been extended to 4-manifolds with several boundaries \cite{relative_trisections_2}, knotted surfaces in 4-manifolds \cite{bridge_trisections_4M} and finitely presented groups \cite{group_trisections}. 
See \cite{trisections_kirby} for an exposition on recent advances in the theory of trisections of 4-manifolds.

The main goal of this paper is to introduce a generalization of trisections of 4-manifolds, called $\star$-trisections, and to develop the diagramatics of this new theory. 
One dificulty of studying trisections of 4-manifolds is the rate at which the genus of the trisections grows under certain operations. 
Softening the definition of a trisection of a 4-manifold with boundary can potentially reduce this complexity. 
For example, in \cite{trisections_via_lefschetz}, a genus seven trisection of $T^2\times S^2$ was obtained by taking the double of a genus 3 trisection for $T^2\times D^2$. 
In Figure \ref{fig_trisection_T2xS2}, we use a genus one $\star$-trisection for $T^2\times D^2$ to draw a genus four trisection for $T^2\times S^2$. 

We generalize the definition of trisection by relaxing the definition of the 4-dimensional manifolds which make up the pieces of the trisection. The definition of $\star$-trisections can be found in Section \ref{section_all_trisections}. For the interested reader, in Section \ref{standard_star_pieces} we dedicate several remarks, lemmas, and figures to the exposition of these new 4-dimensional pieces which we use to build $\star$-trisections; we discuss two equivalent constructions of these pieces, 
and describe the boundary of a $\star$-trisected 4-manifold. The diagrammatics of these new $\star$-trisections are presented in Section \ref{subsection_trisection_diagrams}.


In Sections \ref{section_curve_complement} and  \ref{section_surface_surgery}, we show how to find $\star$-trisections for the complements of neighborhoods of certain embedded submanifolds. 
Motivated by \cite{trisections_via_lefschetz}, we prove a pasting lemma in Section \ref{section_pasting} which allows us to glue two $\star$-trisected 4-manifolds along connected components of their boundaries. In Section \ref{section_surface_surgery} we use this pasting lemma explicitly to produce a closed trisection diagram for the Cacime Surface. In addition, we use the pasting lemma to decribe how to trisect the Fintushel-Stern knot surgery \cite{FS_knot_surgery} and Logarithmic transforms in the spirit of \cite{round_handles}.

\subsection*{Trisections of genus three}
In \cite{genus_two_std}, Meier and Zupan classified all trisections of genus at most two. 
It is therefore natural to seek a classification for trisections of low genus. 
As a proving ground, in this paper we study an infinite family of genus three trisection diagrams. Consider three rational numbers $\frac{a}{b}$, $\frac{c}{d}$, $\frac{p}{q}$ in reduced form. Let $\alpha_1$ and $\alpha_2$ be the top and middle curves of the left diagram in Figure \ref{fig_farey_trisections_intro} and let $\alpha_3$ be the $\frac{a}{b}$ torus knot in the torus obtained by compressing along $\alpha_1$ and $\alpha_2$. 
Take $\alpha$ to be the union of these curves and define $\beta$ and $\gamma$ similarly using $\frac{c}{d}$ and $\frac{p}{q}$, respectively. Observe that 
$(\Sigma,\alpha,\beta)$ is a Heegaard diagram for $S^1\times S^2$ or $S^3$ whenever $|ad-bc|\leq 1$.
When this condition is satisfied for each pair of fractions, the tuple $(\Sigma;\alpha,\beta,\gamma)$ is a genus three trisection diagram. We call such tuple a Farey diagram $D(\frac{a}{b},\frac{c}{d},\frac{p}{q})$. 
\begin{figure}[h]
\centering
\includegraphics[scale=.35]{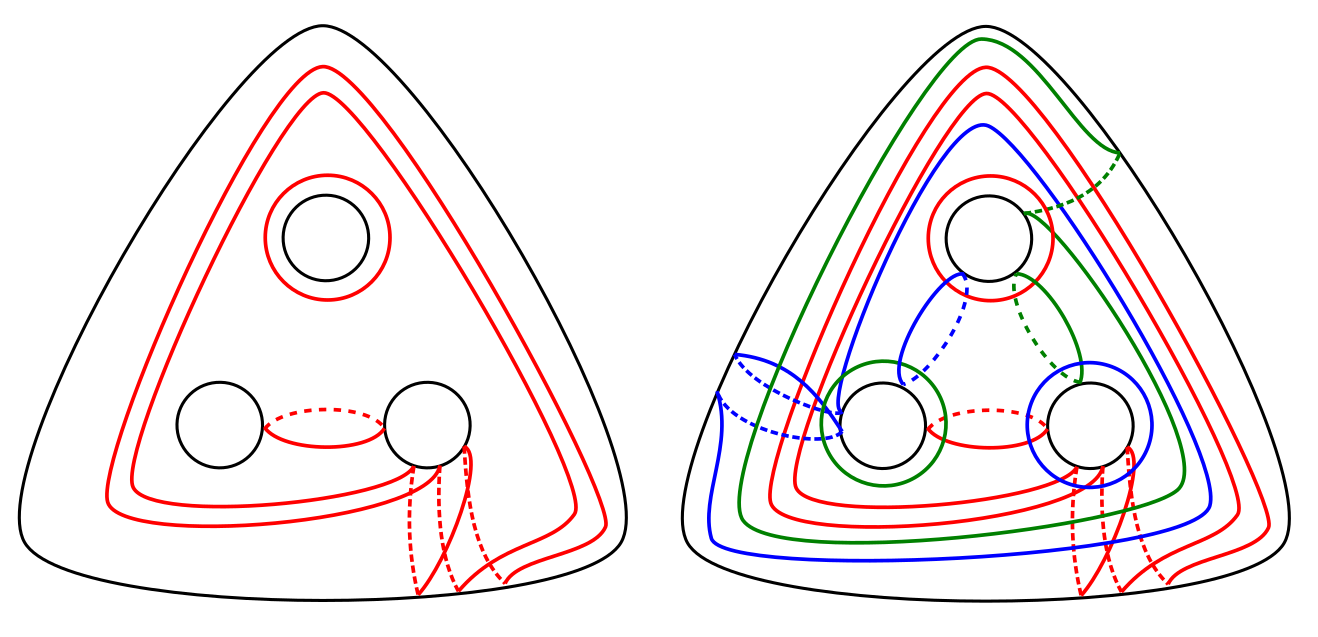}
\caption{The Farey diagram $D(\frac{1}{1}, \frac{1}{2},\frac{2}{3})$.}
\label{fig_farey_trisections_intro}
\end{figure}

The problem of understanding Farey diagrams was proposed during the first day of the 2019 Spring Trisectors Meeting at UGA. By this time, progress had already been made on this problem in \cite{spun_trisections}, where Meier showed that $D(\frac{q}{p},\frac{q}{p},\frac{q}{p})$ is the diagram of a spun lens space $L(p,q)$. Seeking a classification for genus three trisections, he conjectured the following. 
\newtheorem{conj}{Conjecture}
\begin{conj}[Meier \cite{spun_trisections}]\label{spun_conjecture}
Every irreducible 4-manifold with trisection genus three is either the spin of a lens space, or a Gluck twist on a specific 2-knot in the spin of a lens space.
\end{conj}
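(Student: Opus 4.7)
The plan is to separate the conjecture into two halves: a normalization step that reduces every genus three trisection of an irreducible 4-manifold to a Farey diagram $D(\frac{a}{b},\frac{c}{d},\frac{p}{q})$, and a computation step that identifies the underlying 4-manifold of each such diagram. The first half is the geometric heart of the argument; the second is largely algebraic once the right framework is in place.

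First, I would attempt to show that every genus three trisection diagram of an irreducible 4-manifold can be put, after handle slides and diffeomorphisms of $\Sigma$, into Farey form. The starting observation is that each pair among $(\alpha,\beta)$, $(\beta,\gamma)$, $(\alpha,\gamma)$ is a genus three Heegaard diagram of either $S^3$ or a connected sum $\#^k(S^1\times S^2)$. Using Waldhausen's uniqueness theorem for Heegaard splittings of $S^3$ together with the analogous rigidity results of Bonahon--Otal and Hass--Thompson--Scharlemann for small connected sums, the two-curve cut systems shared by each pair can be assumed to be standard. Irreducibility of $X$ then forces the third curve in each handlebody to be a torus knot of some slope on the torus obtained by compressing along the first two curves; otherwise the diagram would be stabilized or reducible, which would contradict irreducibility of $X$ together with the low trisection genus bound.

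Next, once a Farey diagram is in hand, I would compute the underlying 4-manifold using the pasting lemma of Section \ref{section_pasting}. The two compressed curves in each handlebody cut off a standard $S^1\times D^3$ piece that can be stripped away using the $\star$-trisected pieces of Section \ref{standard_star_pieces}, leaving a genus one $\star$-trisection whose gluing data are recorded by the three slopes $\frac{a}{b},\frac{c}{d},\frac{p}{q}$. This genus one situation is small enough to be handled directly: the Farey condition $|ad-bc|\leq 1$ corresponds to adjacency in the Farey tessellation, and the case when the three slopes coincide recovers Meier's spun lens space $L(p,q)$ from \cite{spun_trisections}. When the slopes lie on a common Farey triangle but are not all equal, a careful bookkeeping of how a change of slope at one vertex corresponds to a $\pm 1$-surgery on a standard sphere should identify the diagram with a Gluck twist on the spun 2-knot of the conjecture.

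The main obstacle is the first half: proving that irreducibility plus trisection genus three forces the diagram into Farey form. This is a normalization statement analogous to Meier--Zupan's theorem in genus two, but with the extra difficulty that genus three Heegaard splittings of $\#^k(S^1\times S^2)$ are not as rigid as those of $S^3$, so the three pairs $(\alpha,\beta)$, $(\beta,\gamma)$, $(\alpha,\gamma)$ cannot all be standardized simultaneously by a naive application of uniqueness. A reasonable partial attack is to first handle the case where every pair of handlebodies gives a Heegaard splitting of $S^3$, where Waldhausen's theorem applies most cleanly, and only afterwards incorporate the $S^1\times S^2$ summands; the latter introduce the additional Farey vertices and are precisely where the $\star$-trisection technology developed in this paper is most needed.
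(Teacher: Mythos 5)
The statement you are addressing is labeled a \emph{conjecture} (Meier's), and the paper does not prove it. It only \emph{verifies} it for one explicit family of genus three diagrams, the Farey diagrams $D(\frac{a}{b},\frac{c}{d},\frac{p}{q})$, via Theorems \ref{thm_farey_trisections} and \ref{thm_farey_std}. Your proposal, by contrast, tries to establish the conjecture in full generality, and the key first step --- that irreducibility plus trisection genus three forces the diagram into Farey form after handle slides and surface diffeomorphisms --- is precisely the open normalization problem that nobody, including the authors, knows how to do. You flag it yourself as ``the main obstacle,'' but then suggest it can be handled by ``a reasonable partial attack'' via Waldhausen-type rigidity. That is not a proof outline; it is a restatement of the difficulty. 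Waldhausen controls the Heegaard splittings pairwise, but there is no mechanism given to simultaneously standardize all three handlebodies, and irreducibility of $X$ is not known to imply that each third curve sits as a torus knot on the compressed genus-one surface. That implication is essentially equivalent to the conjecture itself.

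The second half of your argument also misreads what the Farey diagrams actually represent. You say that when the three slopes lie on a common Farey triangle but are not all equal, the resulting 4-manifold should be a Gluck twist on a spun 2-knot in a spun lens space. This is wrong: Theorem \ref{thm_farey_trisections} shows that Farey triplets give $\CP2\#\CP2\#\overline{\CP2}$ or $\CP2\#\overline{\CP2}\#\overline{\CP2}$, and two distinct slopes give $S^2\times S^2$ or $S^2\wt\times S^2$, and Theorem \ref{thm_farey_std} shows these diagrams are handle-slide equivalent to standard \emph{reducible} diagrams. These manifolds are connected sums, hence \emph{not} irreducible, and therefore lie outside the hypotheses of the conjecture entirely. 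The only Farey diagrams that yield irreducible manifolds are those with all three slopes equal, and those give spun lens spaces by Meier's earlier work. The paper's contribution is precisely this reducibility: within the Farey family, irreducibility forces a single slope, which forces a spun lens space, which is consistent with (but does not prove) the conjecture. Your proposal, by trying to make the distinct-slope cases produce Gluck twists, would contradict Theorem \ref{thm_farey_std} rather than complement it.

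So there are two genuine gaps: the normalization step is unresolved (and is not addressed by the paper), and the identification of the manifolds associated to Farey diagrams with distinct slopes is factually incorrect. The paper's actual route in Section \ref{section_genus_three} is different in kind from what you describe: it \emph{assumes} the diagram is a Farey diagram, decomposes $\Sigma$ along a separating curve into a thrice-punctured torus and a thrice-punctured sphere, applies the pasting lemma (Theorem \ref{pasting_lemma_star}, Corollary \ref{pasting_lemma}) to recognize $X$ as $(S^2\times D^2)\cup_\partial Y$ where $Y$ is the complement of a decomposed circle in a genus-one trisected manifold, and then uses Proposition \ref{prop_genus_one} and Corollary \ref{standard_cor} to standardize the decomposed curve and exhibit a reducing sphere. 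No claim is made about non-Farey genus three diagrams.
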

In Section \ref{section_genus_three}, we show that every Farey diagram is the result of pasting together $\star$-trisections for $S^2\times D^2$ and $X-(S^1\times B^3)$ along their boundaries, where $X\in\{ S^1\times S^3, S^4, \CP2, \overline{\CP2}\}$. 
In particular, Farey diagrams yield trisections for spun lens spaces or reducible 4-manifolds. We can further prove that these diagrams are actually standard. 
Theorem \ref{thm_farey_std}, in conjunction with Meier's results on spun lens spaces, proves Conjecture \ref{spun_conjecture} for the family of Farey trisections.
\newtheorem*{thm:farey_std}{Theorem \ref{thm_farey_std}}
\begin{thm:farey_std}
Let $\{\frac{a}{b},\frac{c}{d},\frac{p}{q}\}\subset\mathbb{Q}\cup\{\frac{1}{0}\}$ with $d(x,y)\leq 1$ for each $x,y\in \{\frac{a}{b},\frac{c}{d},\frac{p}{q}\}$. If at least two of these fractions are distinct, then $D(\frac{a}{b},\frac{c}{d},\frac{p}{q})$ is equivalent to the standard diagram for $T\# S$ where $T\in \lbrace S^4,\CP2,\overline{\CP2}\rbrace$ and $S\in\lbrace S^2\times S^2, S^2\wt \times S^2\rbrace$.
\end{thm:farey_std}

\subsection*{Classic relative trisections}
It is worthy of note that the current work on trisections of 4-manifolds with boundary is restricted to the case of the trisection surface having non-empty boundary. Besides a few remarks in the original trisections paper, not much has been said in the closed case. In order to complete the discussion of the basic theory of trisections of manifolds with boundary, in Appendix \ref{section_classic_diagrams} we offer adaptations of proofs of the main theorems in \cite{relative_trisections} and \cite{relative_trisections_2} by Castro, Gay, and Pinzon-Caicedo. 
We prove that the algorithm which recovers the monodromy $\phi:P\to P$ of the induced circular structure $\partial X=P\times_\phi S^1$ also works in the case where $P$ is a closed surface, and extend the algorithm to obtain a relative trisection from a Kirby diagram of $X$ and a page of an open book decomposition (or fibration over $S^1$) on $\partial X$ within the diagram.

\textbf{Acknowledgements.}  
The authors of this paper are grateful to the referee and Maggie Miller for their comments on the previous draft. 
The first author would like to thank Jeff Meier and the topology group of the University of Iowa for helpful conversations. The second author would like to thank the Max Planck Institute for Mathematics, and Steve Hamborg, for their hospitality.

\section{Trisections of 4-manifolds}\label{section_all_trisections}
This section will be broken into two parts. In the first part, we will review the original definition of a 4-manifold trisection given by Gay and Kirby in \cite{trisecting_four_mans}. We elaborate slightly on the definition by giving an equivalent definition involving 3-dimensional compression bodies as well as by including a description of the induced decomposition of the boundary in the relative case. This elaboration will assist in our exposition of $\star$-trisections. We formally introduce $\star$-trisections in the second part of this section.

\subsection{Original Definitions for Trisections}\label{classic}
A trisection of a closed, connected 4-manifold $X$ is a decomposition of $X$ into three 4-dimensional 1-handlebodies $X=X_1\cup X_2\cup X_3$ such that for each pair $i\neq j$ the intersection $X_i\cap X_j$ is a 3-dimensional handlebody and the common intersection $X_1\cap X_2\cap X_3$ is a closed surface. 
\begin{figure}[h]
\centering
\includegraphics[scale=.6]{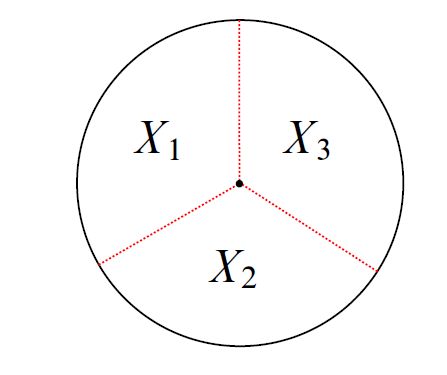}
\caption{A picture two-dimensions lower of a trisection.}
\label{Fig_standard_trisection}
\end{figure}
Here we offer an equivalent definition through the lens of viewing the 4-dimensional pieces in Figure \ref{Fig_standard_trisection} as thickened 3-dimensional handlebodies. Let $C_k$ be a 3-dimensional handlebody with boundary a closed surface of genus $k$, $F_k$. Let $Z_k=[0,1]\times C_k \approx\natural_k \left(S^1\times B^3\right)$. The boundary of $Z_k$ admits a Heegaard splitting $Y_k=\partial Z_k= Y^+_k \cup Y^-_k$ where
\begin{align} Y^+_{k}=\left([1/2,1]\times \partial F_{k}\right)\cup \left(\{1\}\times C_{k}\right) \text{ and } Y^-_{k}=\left(\{0\}\times C_{k}\right) \cup \left([0,1/2]\times F_{k}\right).
\end{align}
Given an integer $g\geq k$, let $Y_k=Y_{k,g}^+\cup Y_{k,g}^-$ be the standard genus $g$ Heegaard splitting of $Y_k$ obtained by stabilizing the genus $k$ Heegaard splitting $g-k$ times. 

\begin{defn}[Trisection of closed 4-manifold]\label{def1}
A trisection of a closed, connected 4-manifold $X$ is a splitting $X=X_1\cup X_2\cup X_3$ and integers $0\leq k,n,g$ with $n\leq k\leq g$ such that each $X_i$ is diffeomorphic to $Z_{k}$ via a diffeomorphism $\varphi_i:X_i \ra Z_{k}$ for which 
\[ \varphi_i(X_i\cap X_{i+1})=Y^+_{k,g} \text{ and } \varphi_i(X_i\cap X_{i-1})=Y^-_{k,g}.\]
\end{defn}

Notice that $Z_k$ is a 4-dimensional 1-handlebody and the pairwise intersections $X_i\cap X_j$ are 3-dimensional handlebodies of genus $g$. This definition is just a slightly more technical rewording of the original presented at the beginning of the section. In \cite{trisecting_four_mans}, Gay and Kirby generalize this definition to allow 4-manifolds which have surface bundles over $S^1$ as boundary. We will again present this definition by presenting the wedges of the trisection as thickened 3-dimensional pieces.

For integers $k,b\geq 0$, let $F_{k,b}$ be a connected orientable surface of genus $k$ with $b$ boundary components. Fix non-negative integers $b,k$ and $n$ with $n< k$ and let $C_{k,b,n}$ denote a 3-dimensional compression body with $F_{k,b}$ as the positive boundary and with $F_{k-n,b}$ as the negative boundary. 
This compression body is built by attaching $n$ 3-dimensional 2-handles to $\{1\}\times F_k\subset [0,1]\times F_k$, yielding a cobordism from $F_{k,b}$ to $F_{k-n,b}$. Now consider the 4-manifold $Z_{k,b,n}:=[0,1]\times C_{k,b,n}$. Part of $\partial Z_{k,b,n}$ is 
\begin{align}
Y_{k,b,n}:=\left( \{0\}\times C_{k,b,n}\right) \cup \left([0,1]\times F_{k,b}\right) \cup \left(\{1\}\times C_{k,b,n}\right),
\end{align}
which has a natural genus $k$ Heegaard splitting into two compression bodies 
\begin{align}
 Y^+_{k,b,n}:=\left([1/2,1]\times F_{k,b}\right)\cup \left(\{1\}\times C_{k,b,n}\right) \text{ and } Y^-_{k,b,n}:=\left(\{0\}\times C_{k,b,n}\right) \cup \left([0,1/2]\times F_{k,b}\right).
 \end{align}
Finally, given any $g\geq k$, let $Y_{k,b,n}=Y^+_{k,b,n,g}\cup Y^-_{k,b,n,g}$ be the genus $g$ Heegaard splitting obtained from the natural genus $k$ splitting by stabilizing $g-k$ times. 

\begin{defn}[Relative trisection]\label{def2}
A trisection of a connected 4-manifold $X$ with non-empty boundary is a splitting $X=X_1\cup X_2\cup X_3$ and integers $0\leq k,b,n,g$ with $n<k\leq g$ such that each $X_i$ is diffeomorphic to $Z_{k,b,n}$ via a diffeomorphism $\varphi_i:X_i \ra Z_{k,b,n}$ for which 
\[ \varphi_i(X_i\cap X_{i+1})=Y^+_{k,b,n,g} \text{ and } \varphi_i(X_i\cap X_{i-1})=Y^-_{k,b,n,g}.\]
\end{defn}
\begin{figure}[h]
\centering
\includegraphics[scale=.13]{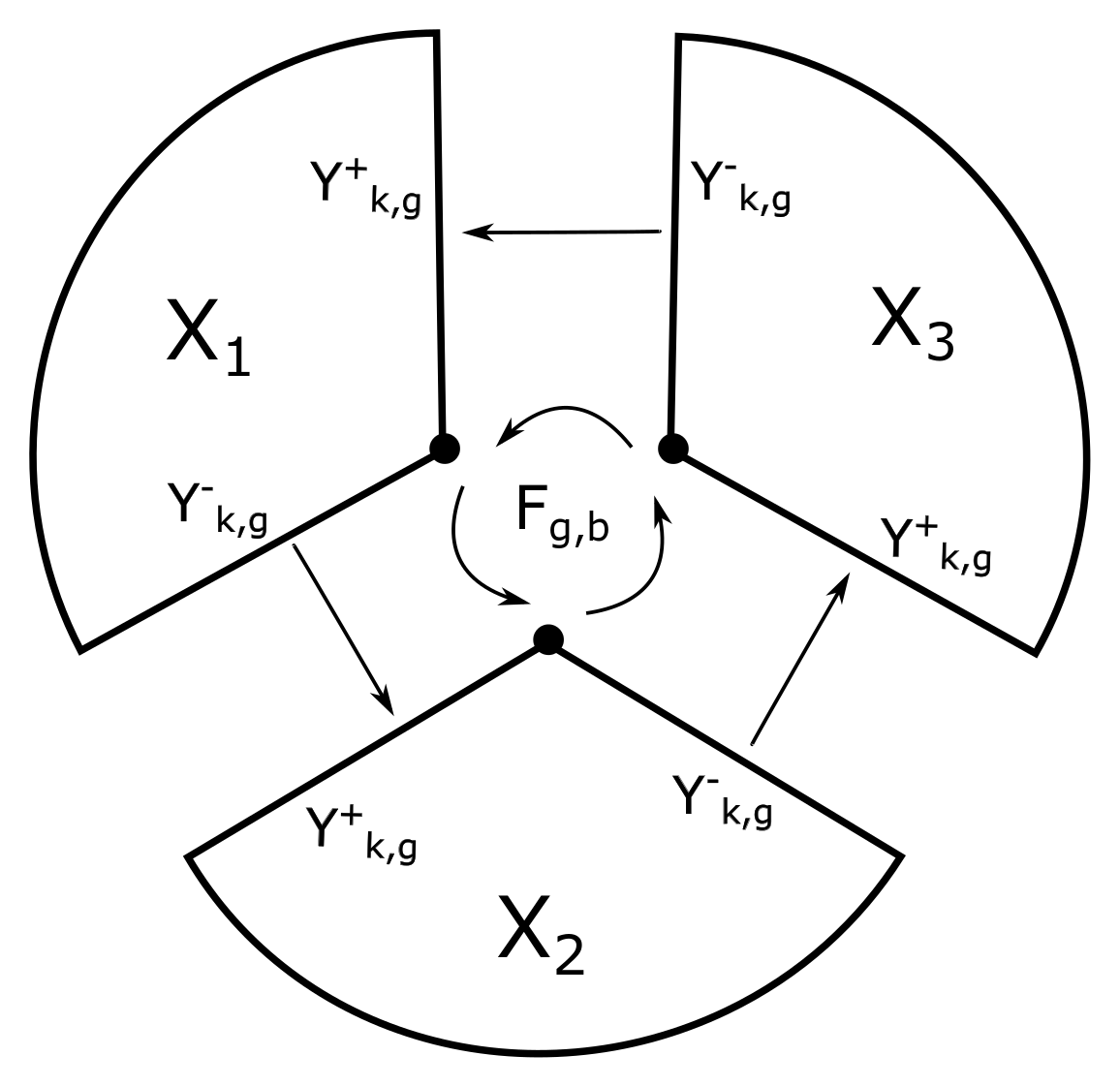}
\caption{A (relative) trisected 4-manifold is built by gluing three standard 4-dimensional pieces along submanifolds of their boundary.}
\label{gluing_maps}
\end{figure}
We can interpret a 3-dimensional handlebody $C_k$ as a cobordism from a closed surface $F_k$ to the empty set. This way, Definition \ref{def2} generalizes Definition \ref{def1}. 
For a detailed discussion on the properties of relative trisections one can see \cite{relative_trisections, relative_trisections_2, trisecting_four_mans}.
The authors of this paper like to think of trisections of 4-manifolds as decompositions into three `standard' pieces $Z_i$ glued along submanifolds of their boundary. 
In this paper we will see some interactions between relative and $\star$-trisections.
In the following subsection we will describe the standard models for the 4-manifold pieces in a $\star$-trisection. 

\subsection{$\star$-Trisections of 4-manifolds} \label{section_startrisections}
Here we will define a more general definition of 4-manifold trisection, which we call $\star$-trisection, by first defining analogous 4-dimensional wedges and then gluing these along submanifolds of their boundaries. Informally, these are obtained by gluing two copies of the 4-manifolds $Z_{k,b,n}$ in Subsection \ref{classic}, and removing properly embedded 2-disks in a controlled way.

Let $F_+=F_{k,b}$ and $F_-=\dot\bigcup_{i=1}^{s} F_{k_i,b_i}$ be two orientable surfaces with $k\geq \sum k_i$, $b=\sum b_i$ and $s\geq 0$. $F_+$ is a connected surface and $F_-$ has $s\geq 0$ connected components. 
We will always assume that $F_-$ has no 2-sphere components. Let $C$ be a 3-dimensional connected compression body with positive boundary $\partial_+C=F_+$ and negative boundary $\partial_-C=F_-$. This compression body is obtained by attaching 3-dimensional 2-handles to $F_+\times\{1\} \subset F_+\times [0,1]$, and capping-off the resulting 2-sphere components with 3-handles. This produces a cobordism $C$ from $F_+$ to $F_-$. The 2-handles above are attached along a collection of pairwise disjoint, non isotopic and possibly boundary parallel simple closed curves $\delta \subset F_+$. 

Let $\wt C$ be a compression body with positive boundary $F_{k,b_0}$. Consider $\wt C^0, \wt C^1\subset \wt C$ two sub-compression bodies spanning $\wt C$ with common part a sub-compression body $\wt C_{all}$; i.e., $\wt C$ is built using a collection of pairwise disjiont simple closed curves $\wt \delta\subset F_{k,b_0}$ such that $\wt \delta=\delta ^0\cup \delta^1$, $\delta_{all}=\delta^0\cap \delta^1$ with $\wt C^i$ determined by $\delta^i$ and $\wt C_{all}$ determined by $\delta_{all}$. 
Define 
\begin{align} \wt Z = \wt C^0\times[0,1/2] \bigcup_{\wt C_{all}\times \{1/2\}} \wt C^1\times [1/2,1]. \end{align}

Consider the submanifold of $\partial \wt Z$, 
\begin{align}
\wt Y=\left( \wt C^0 \times \{0\}\right) \cup \left( F_{k,b_0}\times [0,1]\right)\cup  \left(\wt C^{1}\times \{1\}\right).
\end{align}
Both $\partial \wt Z$ and $\wt Y$ are connected 3-manifolds. Moreover, the surface $F_{k,b_0}\times\{1/2\}$ determines a natural Heegaard splitting of $\wt Y = \wt Y_+ \cup \wt Y_-$ given by 
\begin{align}
\wt Y_-=\left( \wt C^0 \times \{0\}\right) \cup \left( F_{k,b_0}\times [0,1/2]\right) \text{ and } \wt Y_+=\left( F_{k,b_0}\times [1/2,1]\right) \cup \left(\wt C^{1}\times \{1\}\right).
\end{align}

\begin{figure}[h]
\centering
\includegraphics[scale=.07]{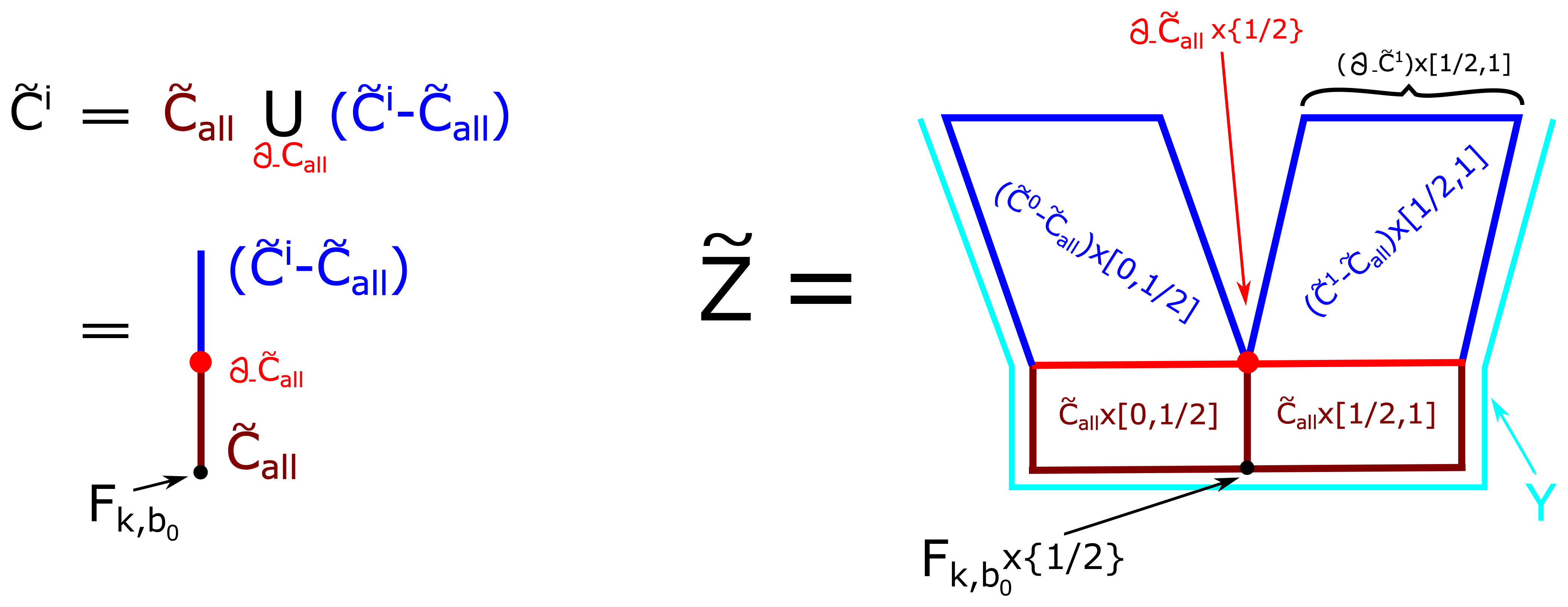}
\caption{A diagram of $\wt Z$ two dimensions down.}
\label{model_Z}
\end{figure}

Define $Z$ to be the 4-manifold obtained from $\wt Z$ by attaching $m\geq 0$ 1-handles with both feet in $\wt Y$. 
Equivalently, $Z$ is obtained by carving a collection of boundary parallel 2-dimensional disks $(D,\partial D)\subset (\wt Z, \wt Y)$.
We will prove in Lemma \ref{lem_unique_trivial_disks} that there is a unique isotopy class of boundary parallel disks in $\wt Z$ with a given boundary. Thus the boundary link $U=D\cap \wt Y$ is an unlink of unknots determining the collection $D$.
Isotope $U$ in bridge position with respect to the Heegaard splitting $\wt Y=\wt Y_- \cup \wt Y_+$; this means that $U\cap \left(\wt C^i\times\{i\}\right)$ is a collection of boundary parallel arcs on each compression body, and $U\cap \left(F_{k,b_0}\times [0,1]\right)$ is a colection of product arcs.

Set $C^i=(\wt C^i\times\{1/2\}) -\eta(U)$, $Y=\wt Y - \eta(U)$ and $Z=\wt Z-\eta(D)$. 
By construction $C^0$ and $C^1$ are compression bodies with positive boundary the surface $F_{k,b}=F_{k,b_0}-\eta(U)$ with $b:=b_0+|U\cap (F_{k,b_0}\times\{1/2\})|$. 
Furthermore, $Y\subset Z$ is given by 
\begin{align}
 Y=Y_{C^0, C^1} := \left( C^{0} \times \{0\} \right) \cup \left( F_{k,b}\times [0,1]\right) \cup \left( C^{1}\times \{1\}\right),
\end{align}
which admits a Heegaard splitting $Y_{C^0,C^1}=Y^+_{C^0, C^1} \cup Y^-_{C^0, C^1}$ as follows
\begin{align*} 
Y^+_{C^0, C^1} :=& \left( F_{k,b}\times [1/2,1]\right) \cup \left( C^{1}\times \{1\}\right)\\
Y^-_{C^0, C^1} :=& \left( C^{0}\times \{0\}\right) \cup \left( F_{k,b}\times [0,1/2]\right)
\end{align*} 
For $g\geq k$, let $Y_{C^0, C^1}=Y^+_{C^0, C^1;g}\cup Y^-_{C^0, C^1;g}$ be the splitting above stabilized $g-k$ times. 
In the next section we will see that the 4-manifold $Z$ can be built from the information of $C^0$, $C^1$ and $C_{all}$, where $C_{all}$ is the comon compression body between $C^0$ and $C^1$ obtained by embedding the loops $\wt \delta_{all}$ into $\Sigma$ and adding some new loops from $U$ (see Remark \ref{remark_constructions_standard_piece}). Thus we write $Z=Z(C^0,C^1,C_{all})$ to emphasize this dependence.

\begin{defn}[$\star$-Trisection] \label{def_star_tris}
A $\star$-trisection of a connected 4-manifold $X$ is a decomposition $X=X_1\cup X_2\cup X_3$ with connected compression bodies $C_{(i)}^0$, $C_{(i)}^1$, $C_{(i),all}$ as above and an integer $g\geq g(\partial_+C_{(i)}^j)$ for $i=1,2,3$, $j=0,1$, such that each $X_i$ is diffeomorphic to $Z(C_{(i)}^0, C_{(i)}^1, C_{(i),all})$ via a map $\varphi_i: X_i\ra Z(C_{(i)}^0, C_{(i)}^1, C_{(i),all})$ for which 
\[ \varphi_i(X_i\cap X_{i+1}) =Y^+_{C_{(i)}^0,C_{(i)}^1;g} \text{ and } \varphi_i(X_{i-1}\cap X_{i}) =Y^-_{C_{(i)}^0,C_{(i)}^1;g}.\] 
The triple intersection is a connected surface $\Sigma$ of genus $g$ with $b\geq 0$ boundary components called the $\star$-trisection surface. 
\end{defn}
\begin{figure}[h]
\centering
\includegraphics[scale=.11]{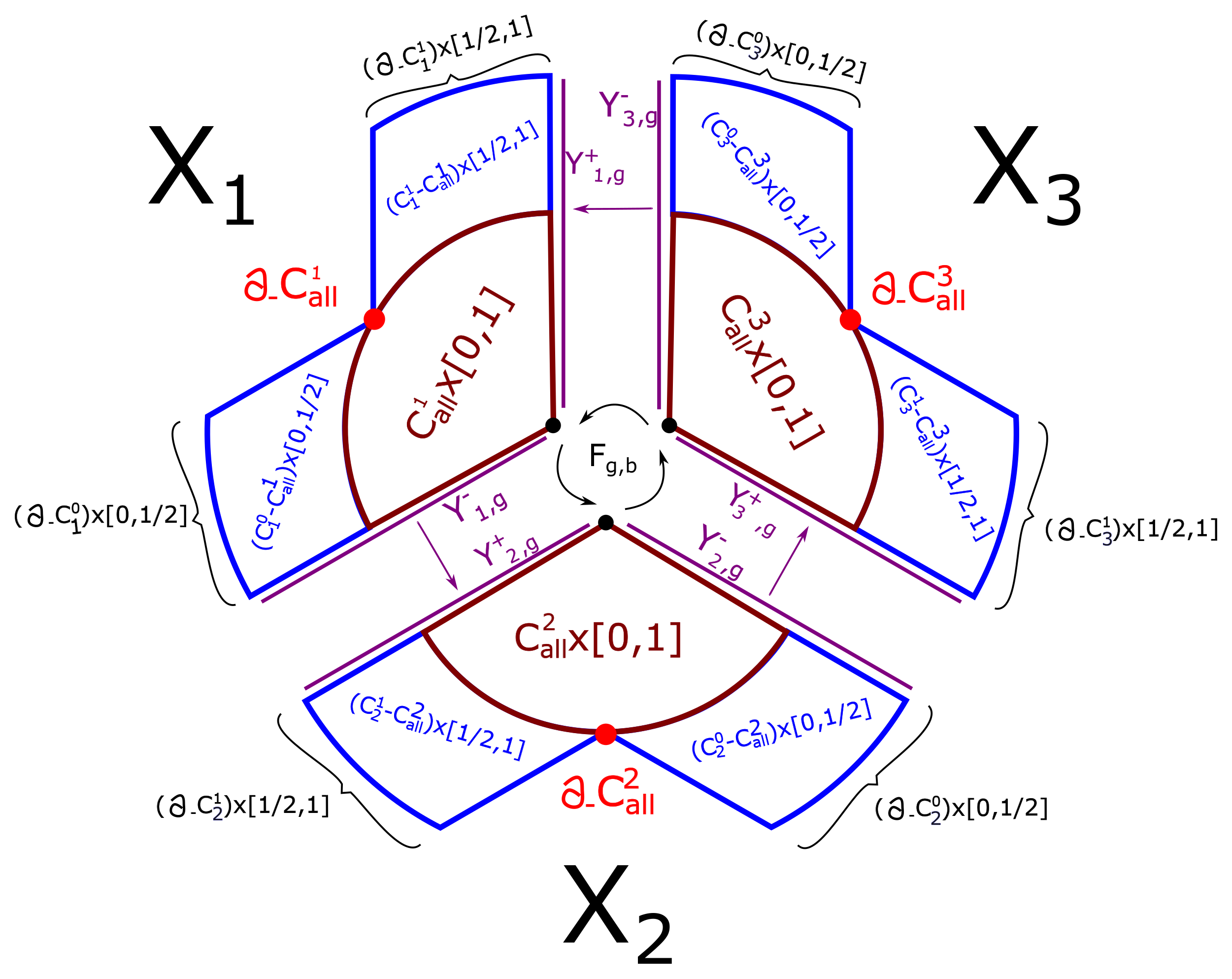}
\caption{A $\star$-trisected 4-manifold $X$ is built by gluing three `standard' 4-dimensional pieces $Z(C_{(i)}^0, C_{(i)}^1, C_{(i),all})$ ($i=1,2,3$) along submanifolds of their boundary. Notice that if for each $i$ the compression bodies $C_{(i)}^0$ and $C_{(i)}^1$ are determined by the same sets of curves, then the extra `blue fins' in the diagram disapear, giving us a foliation of the boundary of $X$ by copies of $\partial_- C_{(1),all}$.}
\label{star_gluing_maps}
\end{figure}
\begin{remark}[Classic trisections]
If all of the compression bodies $C_{(i)}^j$ have empty negative boundary, each $Y^\pm_{C_{(i)}^0,C_{(i)}^1;g}$ is a handlebody and it follows that $X$ is closed. Here, the definition above agrees with the original definition of a trisection when $X$ is closed \cite{trisecting_four_mans}. 
If for all $i$ we have that $C_{(i)}^0$ and $C_{(i)}^1$ are compression bodies determined by the same loops with connected negative boundary or  $Y^\pm_{C_{(i)}^0,C_{(i)}^1;g}$ are determined by the curves in Figure 5 of \cite{relative_trisections_2}, then this decomposition is the same as a trisection of a 4-manifold with boundary in \cite{trisecting_four_mans,relative_trisections,relative_trisections_2}. 
Such trisections induce an open book decomposition on $\partial X$ with binding a $b$-component link. 
We will refer to all the above as \textbf{classical trisections}, among them we refer to the ones with $\partial X\neq \emptyset$ as \textbf{relative trisections}. We will sometimes wish to distinguish whether or not the binding of the open book $\partial X$ is empty; in these cases we will simply write $b=0$ or $b>0$, referring to the number of boundary components on the relative trisection diagram. 
The most general trisection, or \textbf{$\star$-trisection}, is when $C_{(i)}^0$ is not the same as $C_{(i)}^1$ for some $i$. 
The following section is dedicated to present different ways of thinking about the sectors of $\star$-trisections. 
\end{remark}

\section{The Standard Pieces} 
\label{standard_star_pieces}
In this section we will describe two equivalent ways of building the `standard' 4-dimensional piece $Z(C_{(i)}^0, C_{(i)}^1, C_{(i),all})$ of a $\star$-trisection. The first construction presents these standard pieces as boundary-conneceted sums of simple 4-dimensional blocks and also establishes the uniqueness of the boundary parallel disks in $\wt Z$, which are carved to form $Z$, up to isotopy. The second construction presents these standard pieces from the handlebody perspective. Remarks \ref{remark_constructions_standard_piece} and \ref{remark_diagrams_standard_piece} summarize the conclusions of this section in the form of four equations (\ref{eq_construction_2_stab}-\ref{equation_construction_4_stab}) and a description of the types of curves needed to build a `standard piece'. 

\textbf{Notation.} Let $\wt C$ be a compression body with positive boundary $F_{k,b_0}$. Consider $\wt C^0, \wt C^1\subset \wt C$ two sub-compression bodies spanning $\wt C$ with fixed common part a sub-compression body $\wt C_{all}$; i.e., there is a collection of simple closed curves $\wt \delta\subset F_{k,b_0}$ determining $\wt C$ such that $\wt \delta=\delta ^0\cup \delta^1$, $\delta_{all}=\delta^0\cap \delta^1$ with $\wt C^i$ determined by $\delta^i$ and $\wt C_{all}$ determined by $\delta_{all}$.
Denote by $\wt F_{all}$ the negative boundary of $\wt C_{all}$. Denote by $\wt C_i=\wt C^i-int(\wt C_{all})$. $\wt C_i$ is a (possibly disconnected) compression body with positive boundary $\wt F_{all}$ determined by the curves\footnote{The curves $\wt \delta_i$ in $F_{k,b_0}$ can also be drawn in $F_{all}$ since they are disjoint from $\wt\delta_{all}$.} $\wt \delta_i = \wt \delta^i -\wt \delta_{all}$. 

We will always choose $\wt \delta_i$ such that the following condition is satisfied: when compressing each component of $\wt F_{all}$ along $\wt \delta_i$, the resulting surface will have at most one sphere component; with equality if and only if the component of $\wt C_i$ is a handlebody.
In particular we will always be able to build $\wt C_{all}$ with no 3-handles unless $\wt F_{all}$ is empty and $\wt C_{all}$ is a handlebody.
\begin{figure}[h]
\centering
\includegraphics[scale=.08]{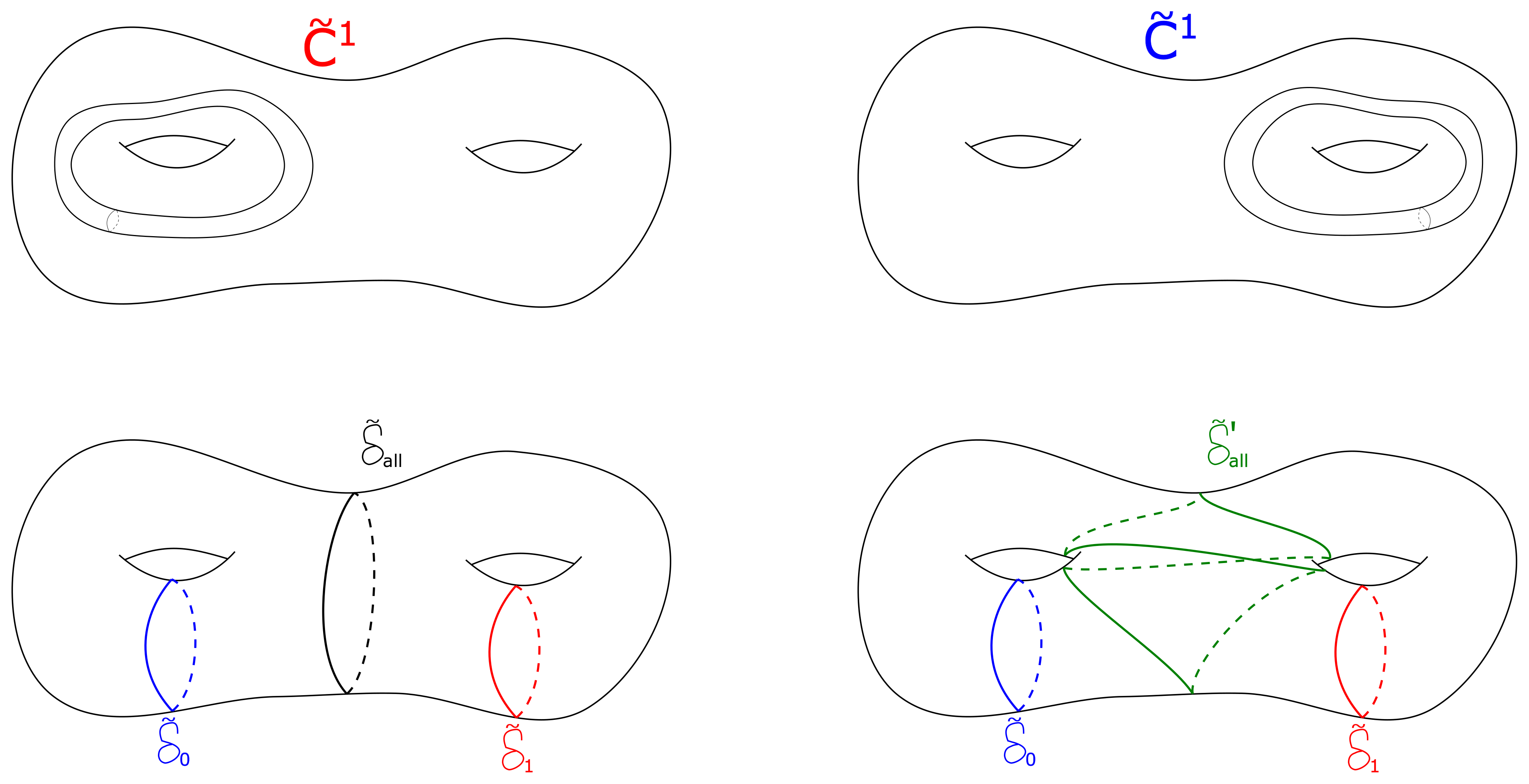}
\caption{Example of loops determining the compression bodies $\wt C^i$, $\wt C_i$, and $\wt C_{all}$. Note in this case $\wt C$ is a handlebody and $\wt\delta_0\cup \wt\delta_1 \cup \wt\delta_{all}$ is not a minimal set of meridians for $\wt C$. The bottom part shows two distinct chioces for loops in $\wt\delta_{all}$.}
\label{Fig_example_loops}
\end{figure}

\begin{remark}[Non-uniqueness of $\wt C_{all}$]
Given two compression bodies $\wt C^0, \wt C^1\subset \wt C$, the compression body $\wt C_{all}$ is not uniquely determined up to isotopy inside $\wt C$. This can be seen by taking the 3-manifold with the Heegaard splitting $W=C^0\cup_{F_{k,b_{0}}} C^1$. Separating loops in $\wt \delta_{all}$ come from separating spheres in $W$, which might not be isotopic depending on the topology of $W$. The bottom part of Figure \ref{Fig_example_loops} shows an example of distinct $\wt \delta_{all}$ sets. 
\end{remark}

\subsection{Construction 1} \label{subsection_Construction_1}
Using Figure \ref{model_Z}, we can convince ourselves that $\wt Z$ can be built from $\wt C_{all}\times [0,1]$ by gluing collars of the subcompression bodies $C_i$. More precisely, 
\begin{align}\label{eq_0}
\wt Z \text{ } 
= \text{ } & \wt C_0 \times [0,1/2] \bigcup_{\wt F_{all}\times[0,1/2]} \wt C_{all}\times [0,1] \bigcup_{\wt F_{all}\times[1/2,1]}\wt C_1 \times [1/2,1].
\end{align}
We think of the product regions $\wt C_0 \times [0,1/2]$ and $\wt C_1\times [1/2,1]$ as `fins' attached to $\wt C_{all}\times[0,1]$, with the interval directions being horizontal in Figure \ref{model_Z}. Imagine pulling $\wt F_{all}\times \{1/2\}$ `up' in order to horizontally align the fibers $\wt C_i\times \{pt\}$ of the fins (see Figure \ref{isotopy_model_Z}). This makes the interval directions for the product regions $\wt C_0 \times [0,1/2]$ and $\wt C_1\times [1/2,1]$ now vertical in the figure. 
Such isotopy only affects points in $\wt Z$ near $\wt F_{all}\times[0,1]\subset \wt C_{all}\times [0,1]$ so it can be chosen to be the identity in $\left(\wt C_{all}-\eta(\wt F_{all})\right)\times[0,1]$. Thus $\wt Z$ is diffeomorphic to the union 
\begin{align}
\wt Z\text{ }\approx \text{ }\left(\left(\wt C_0\cup_{\wt F_{all}} \wt C_1\right)\times [0,1] \right)\bigcup\wt C_{all}\times [0,1],
\end{align}
where we glue a neighborhood of the surface $\wt F_{all}\times \{0\}$ in $\left(\wt C_0\cup_{\wt F_{all}} \wt C_1\right)\times \{0\}$ with the product $\wt F_{all}\times [0,1]$ in $\wt C_{all}\times [0,1]$.

\begin{figure}[h]
\centering
\includegraphics[scale=.07]{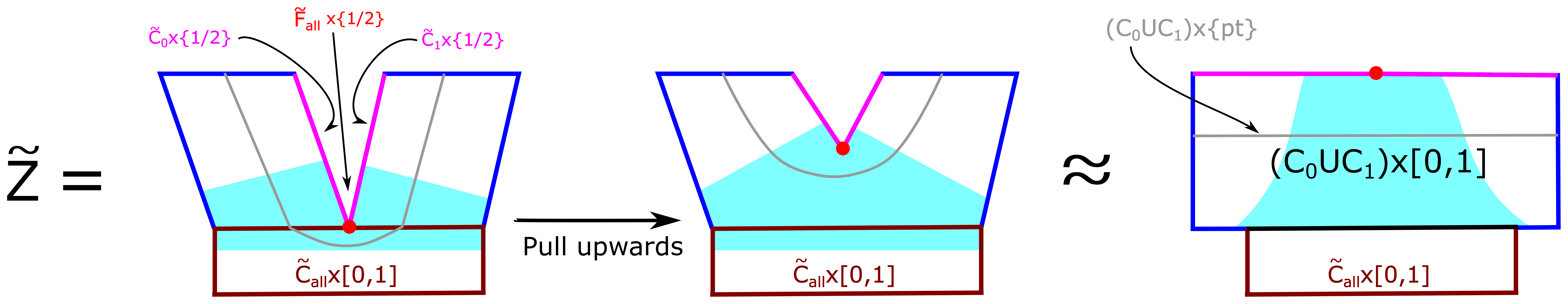}
\caption{How to pull $\wt F_{all}\times \{1/2\}$ `up'. Notice that the isotopy has support in a subset of $\wt Z$ diffeomorphic to $\wt F_{all}\times E$, where $E$ is the shaded disk.}
\label{isotopy_model_Z}
\end{figure}

Recall that a connected compression body $C$ can be built from its negative boundary by adding $l=\frac{1}{2}\left(\chi(\partial_-C)-\chi(\partial_+ C)\right)$ 1-handles to $\partial_-C\times \{1\}\subset \partial_-C\times [0,1]$. If $\partial_-C=\emptyset$, one must add $(l+1)$ 1-handles to a 3-ball (0-handle). Thus, if $\wt F_{all}\neq \emptyset$ then $\wt Z$ is obtained by attaching 1-handles to $\left(\wt C_0\cup_{\wt F_{all}} \wt C_1\right)\times [0,1]$ along $\left(\wt C_0\cup_{\wt F_{all}} \wt C_1\right)\times \{0\}$.
If $\wt F_{all}$ is empty, then $\wt C_{all}$ is a handlebody, $\wt C_0$ and $\wt C_1$ are empty, and $\wt Z$ is a 4-dimensional handlebody. We can safely conclude that 
\begin{align}\label{equation_1}
\wt Z\text{ } \approx \text{ }(\text{0-handle}) \cup\big((\wt C_0\cup_{\wt F_{all}}\wt C_1)\times [0,1]\big) \cup\left(|\delta_{all}| \text{ 1-handles}\right).
\end{align}

\begin{remark}\label{remark_eq_1}
Eventhough the 3-manifold $\wt C_0\cup_{\wt F_{all}}\wt C_1$ might be disconnected\footnote{In fact $\wt C_0\cup_{\wt F_{all}}\wt C_1$ has as many components as $|\wt F_{all}|$.}, each component of $\left(\wt C_0\cup_{\wt F_{all}}\wt C_1\right)\times[0,1]$ has connected 3-manifold boundary. Thus, up to diffeomorphism, each 1-handle in Equation \ref{equation_1} corresponds to a boundary connected sum between two components or with a copy of $S^1\times B^3$. 
\end{remark}

\begin{remark}
Suppose that all of the components of $\partial_-\wt C_0$ and $\partial_-\wt C_1$ are surfaces with boundary. By construction of $\wt\delta^0$ and $\wt\delta^1$, one can show that $\wt C_0 \cup_{\wt F_{all}} \wt C_1$ is also a 3-dimensional handlebody. Therefore, both the standard piece $\wt Z$ and $Z$ are 4-dimensional 1-handlebodies in this specific case. Classical relative trisections satisfy this condition. 
\end{remark}

The following lemma is a refinement of Equation \ref{equation_1}.
\begin{lem}\label{lem_eq_1}
$\wt Z$ is diffeomorphic to a boundary connected sum of a 4-dimensional 1-handlebody with the boundary connected sum of the trivial bundles $F\times D^2$, where $F$ runs through the components of the negative boundary of $\wt C$.
\end{lem}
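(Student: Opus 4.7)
My plan is to prove the lemma by constructing a diffeomorphism $\wt Z \cong \wt C \times I$ and then exhibiting the desired decomposition of the product $\wt C \times I$.

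First, I would verify that $\wt C \times I$ has the desired form. Since $\wt C$ is a connected 3-dimensional compression body and $\partial_-\wt C$ has no 2-sphere components, $\wt C$ admits a handle decomposition
\[
\wt C = (\partial_-\wt C) \times I \,\cup\, (\text{3-dim 1-handles}),
\]
augmented by 3-dim 0-handles only for those components of $\wt C$ which are handlebodies. Crossing with $I$ yields
\[
\wt C \times I = (\partial_-\wt C) \times D^2 \,\cup\, (\text{4-dim 0- and 1-handles}) \;\cong\; \Bigl(\natural_F F \times D^2\Bigr) \natural H,
\]
with $H$ a 4-dimensional 1-handlebody and $F$ ranging over the components of $\partial_-\wt C$.

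The main step is the identification $\wt Z \cong \wt C \times I$. Equation (\ref{eq_0}) realizes $\wt Z$ as a ``staircase'' subset of $\wt C \times I$ with fiber $[0,1/2]$ above $\wt C_0$, fiber $[1/2,1]$ above $\wt C_1$, and fiber $[0,1]$ above $\wt C_{all}$, glued along $\wt F_{all}$-collars. I would construct a diffeomorphism $\Phi : \wt C \times I \to \wt Z$ fiberwise over $\wt C$: outside a bicollar of $\wt F_{all}\subset \wt C$, let $\Phi(x,t) = (x,\phi_x(t))$ where $\phi_x$ is the affine orientation-preserving bijection from $[0,1]$ onto the fiber of $\wt Z$ above $x$ (namely $t \mapsto t/2$ over $\wt C_0$, $t \mapsto (t+1)/2$ over $\wt C_1$, and $t \mapsto t$ over $\wt C_{all}$). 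Within a bicollar $\wt F_{all}\times(-\epsilon,\epsilon)\subset \wt C$, $\phi_x$ is smoothly interpolated using a cutoff function; this smooths the corners of $\wt Z$ along $\wt F_{all}\times\{1/2\}$ and produces a smooth diffeomorphism.

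Combining the two steps gives $\wt Z \cong \wt C \times I \cong (\natural_F F \times D^2) \natural H$, completing the proof. The main technical obstacle is the construction of $\Phi$: one must choose a bicollar of $\wt F_{all}$ compatible with the compression body structures of $\wt C_0$, $\wt C_1$, and $\wt C_{all}$, verify smoothness of the interpolation, and confirm that the corner-smoothing of $\wt Z$ yields exactly the smooth structure of $\wt C \times I$. Given these checks, the proof reduces to essentially the standard product decomposition.
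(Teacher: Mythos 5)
Your high-level plan is appealing, and the final claim $\wt Z \cong \wt C \times I$ is in fact correct (the paper's argument shows both $\wt Z$ and $\wt C\times I$ are $\widehat C\times I=(\wt C - \mathrm{int}(\wt C_{all}))\times I$ with $|\wt\delta_{all}|$ 1-handles attached, and by Remark~\ref{remark_eq_1} this determines the diffeomorphism type). Step~2, decomposing $\wt C\times I$ using the handle structure of $\wt C$, is also fine. The gap is in Step~1: the fiber-preserving map $\Phi(x,t)=(x,\phi_x(t))$ cannot simultaneously be continuous and surjective. The fiber of $\wt Z$ over $x$ jumps from $[0,1/2]$ to $[0,1]$ as $x$ crosses $\wt F_{all}$ into $\wt C_{all}$, creating a concave corner along $\wt F_{all}\times\{1/2\}$. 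If $\phi_x$ varies continuously with a cutoff near $\wt F_{all}$, then $\phi_x(1)$ must tend to $1/2$ as $x\to\wt F_{all}$ from the $\wt C_0$ side, so the open wedge $\wt F_{all}\times(1/2,1]$ near the corner is missed and $\Phi$ is not onto; conversely forcing surjectivity forces a discontinuity. Any correct flattening must move the $\wt C$-coordinate as well as the $I$-coordinate. (There is also a subtlety in ``realizing $\wt Z$ as a staircase subset of $\wt C\times I$'': $\wt C_0$ and $\wt C_1$ are not disjoint in $\wt C$ --- they share a collar of $\wt F_{all}$ --- so Equation~(\ref{eq_0}) is an abstract gluing, not a literal subset.)

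What the paper does instead is apply a non-fiber-preserving isotopy (Figure~\ref{isotopy_model_Z}) that re-presents $\wt Z$ as $\bigl((\wt C_0\cup_{\wt F_{all}}\wt C_1)\times I\bigr)\cup(\wt C_{all}\times I)$, where $\wt C_0\cup_{\wt F_{all}}\wt C_1$ is the \emph{abstract} Heegaard-type union with $\wt C_0$ and $\wt C_1$ on opposite sides of $\wt F_{all}$ --- genuinely a different 3-manifold from $\widehat C$. The real geometric content of the proof is showing that $(\wt C_0\cup_{\wt F_{all}}\wt C_1)\times I$ and $\widehat C\times I$ are nonetheless diffeomorphic: both arise by attaching 4-dimensional 2-handles to $\wt F_{all}\times D^2$ along curves in $\wt F_{all}\times S^1$ with surface framing, and because $\wt\delta_0$ and $\wt\delta_1$ are disjoint in $\wt F_{all}$, their attaching regions can be slid to the same fiber. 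Your proposal bypasses this step entirely, so as written it does not give an alternative proof --- the nontrivial content has been smuggled into the unjustified ``smooth interpolation'' near $\wt F_{all}$. One could certainly imagine making a direct $\wt Z\cong\wt C\times I$ argument rigorous, but it would need to confront the corner in a way that is not just a fiberwise reparameterization.
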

\begin{proof}
We use the notation described at the begining of the subsection.
Remark \ref{remark_eq_1} allows us to only look at the connected components of $\wt C_0 \cup_{\wt F_{all}} \wt C_1$, which are in correspondance with $|\wt F_{all}|$. We then suppose that $\wt F_{all}$ is a connected non-empty surface.

The fact that $\wt C_{all}$ is the common sub-compression body between $\wt C^0$ and $\wt C^1$ implies that there are no essential simple closed curves in $\wt F_{all}$ bounding disks in both $\wt C_0$ and $\wt C_1$. 
In particular, if both collections of curves $\wt \delta_i$ are non-empty, then the result of compressing $\wt F_{all}$ along $\wt \delta_0 \cup \wt \delta_1$ contains no sphere components, and the compression body $\wt C-int(\wt C_{all})$, determined by $\wt\delta_0 \cup \wt \delta_1$ in $\wt F_{all}$, has non-empty negative boundary. In other words, we do not need 3-dimensional 3-handles to build $\wt C - int(C_{all})$ using the loops $\wt \delta_0 \cup \wt \delta_1$. 

Suppose that both $\wt \delta_0$ and $\wt \delta_1$ are non-empty. The observation above tells us that no 3-handles are needed to build each $\wt C_i$. Thus the 3-manifold $\wt C_0\cup \wt C_1$ is built from $\wt F_{all}\times [0,1]$ by attaching 2-handles along $\wt\delta_i\times\{i\}\subset \wt F_{all}\times\{i\}$ for $i=0,1$. 
This implies that $\left(\wt C_0\cup \wt C_1\right)\times [0,1]$ can be built from $\wt F_{all}\times D^2 \approx \wt F_{all}\times [0,1]^2$ by attaching 4-dimensional 2-handles along $\wt \delta_i \times \{(-1)^i\} \subset \wt F_{all}\times \{(-1)^i\}$ for $i=0,1$ with framing given by the fiber surface. Since $\wt\delta_0$ and $\wt \delta_1$ are disjoint in $\wt F_{all}$, we can isotope the attaching regions of the 2-handles in $\wt F_{all}\times S^1$ to lie in the same fiber $\wt F_{all}\times \{1\}$. Using the previous paragraph we get that $\left(\wt C_0\cup \wt C_1\right)\times [0,1]$ is diffeomorphic to $\left(\wt C-int(\wt C_{all})\right)\times [0,1]$. 
Notice that the same conclusion holds if one of $\wt \delta_i$ is empty.

To end, recall that $\widehat C=\wt C-int(\wt C_{all})$ is a compression body satisfying $\partial_+ \widehat C=\wt C_{all}$ and $\partial_- \widehat C=\partial_-\wt C$. 
If $\partial_- \widehat C$ is empty, then $\widehat C$ is a handlebody and $\wt Z\approx \widehat C\times [0,1]$ is a 1-handlebody.
On the other hand, if $\partial_- \widehat C\neq \emptyset$ then $\widehat C$ is built from $\left(\partial_- \widehat C\right)\times [0,1]$ by attaching 1-handles along $\left(\partial_- \widehat C\right)\times \{1\}$ and $\wt Z$ has the right diffeomorphism class.
\end{proof}

In order to build a standard 4-dimensional piece for a $\star$-trisection, we remove boundary parallel disks from the 4-manifold $\wt Z$. We want such disks to be defined solely by their boundary. This is the case when $\wt Z$ is a 4-ball \cite{liv_surfaces_bounding_unlink} or a 1-handlebody \cite{bridge_trisections_4M}. The following technical lemma proves uniqueness for any $\wt Z$. 

\begin{lem}\label{lem_unique_trivial_disks}
Let $\wt Z$ as above, and let $L$ be an unlink in $\partial \wt Z$. Up to isotopy fixing $L$, the unlink $L$ bounds a unique collection of simultaneously boundary parallel disks in $\wt Z$. 
\end{lem}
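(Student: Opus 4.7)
The plan is to use the decomposition of $\wt Z$ given by Lemma \ref{lem_eq_1} to reduce the problem to understanding boundary parallel disks in explicit 4-dimensional pieces. By that lemma, $\wt Z \approx H \natural (F_{i_1} \times D^2) \natural \cdots \natural (F_{i_s} \times D^2)$, where $H$ is a 4-dimensional 1-handlebody and each $F_{i_j}$ is a connected component of $\partial_-\wt C$ (with no $S^2$ components by hypothesis). Consequently, $\partial \wt Z$ is itself a connected sum of copies of $S^1 \times S^2$ (coming from $\partial H$) and pieces coming from the $\partial(F_{i_j} \times D^2)$.

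The first step is to reduce to the single-component case. Given two collections $\{D_i\}_{i=1}^k$ and $\{D_i'\}_{i=1}^k$ of simultaneously boundary parallel disks with the same boundary unlink $L$, I would put them in general position and use standard innermost circle and innermost arc of intersection arguments on these properly embedded disks to peel off intersections one component at a time, applying single-component uniqueness at each stage.

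For the single-component case, fix an unknot $K \subset \partial \wt Z$ and two boundary parallel disks $D_1, D_2 \subset \wt Z$ with $\partial D_j = K$. Each $D_j$ cobounds a 3-ball $B_j \subset \wt Z$ with a disk $D_j^\partial \subset \partial \wt Z$. The key claim is that the 2-sphere $D_1^\partial \cup D_2^\partial \subset \partial \wt Z$ bounds a 3-ball \emph{in $\wt Z$}, even when it is essential in the boundary. I would first isotope $D_1^\partial \cup D_2^\partial$ into a single prime summand of $\partial \wt Z$ (using innermost-disk arguments on its intersections with the connect-sum 2-spheres). In each $F_{i_j} \times S^1$-type summand, the prime factor is aspherical when $F_{i_j}$ has genus at least one, hence irreducible, so any embedded 2-sphere there already bounds a ball inside that factor. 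In each $S^1 \times S^2$ summand, the sphere may be essential in $\partial \wt Z$ but bounds a ball in the corresponding $S^1 \times B^3$ factor of $H$ (for instance, the essential $\{pt\} \times S^2$ bounds $\{pt\} \times B^3$). Either way, we get a 3-ball $B \subset \wt Z$ with $\partial B = D_1^\partial \cup D_2^\partial$. After arranging transversality along shared boundary disks, $B_1 \cup B \cup B_2$ assembles into a 3-ball cobounded by $D_1$ and $D_2$, which yields the desired ambient isotopy fixing $K$.

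The main obstacle will be the careful transversality and innermost-disk bookkeeping in the second half of step three: first arranging the 2-sphere $D_1^\partial \cup D_2^\partial$ to lie in a single prime factor of $\partial \wt Z$, then verifying that $B$, $B_1$, and $B_2$ glue cleanly along their shared boundary disks to produce a single 3-ball in $\wt Z$. The $S^1 \times S^2$ summands in particular demand extra care, since the bounding 3-ball only exists after leaving the boundary and entering the 4-manifold. Once this is handled, the innermost-arc argument extends the single-component uniqueness to the full unlink.
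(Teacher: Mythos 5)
Your plan diverges from the paper's at the key reduction step, and the divergence opens a genuine gap. The paper does \emph{not} try to prove directly that every embedded $2$-sphere in $\partial\wt Z$ bounds a ball in $\wt Z$ and then assemble $3$-balls by hand. Instead it quotes Lemma 8 of \cite{bridge_trisections_4M} as a black box: if $A_1,A_2$ each have unique trivial disks and satisfy the fill-every-boundary-sphere property, then $A_1\natural A_2$ again has unique trivial disks, and $1$-handlebodies satisfy both hypotheses. With Lemma \ref{lem_eq_1}, this reduces everything to checking uniqueness of trivial disks in a single $F\times D^2$ with $F$ closed of positive genus, which is handled by pushing both disks into $F\times S^1$ and doing an innermost-circle swap using irreducibility of $F\times S^1$.

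Your route instead tries to establish the fill-every-boundary-sphere property for all of $\wt Z$ at once, and the argument you sketch for it fails. You propose to ``first isotope $D_1^\partial\cup D_2^\partial$ into a single prime summand of $\partial\wt Z$'' and then argue factor by factor. But an embedded $2$-sphere in a connected sum such as $\partial\wt Z = (\#_k\, S^1\times S^2)\,\#\,(F_1\times S^1)\,\#\cdots$ need not be isotopic into any single prime piece: the connect-sum spheres themselves are counterexamples, as are non-separating spheres that span several $S^1\times S^2$ factors. Your case analysis (irreducibility for $F\times S^1$, the standard $\{pt\}\times S^2$ for $S^1\times S^2$) only covers spheres that \emph{do} sit inside one prime factor, so the argument does not yet produce the ball $B$. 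To fix this you would either need the classification of spheres in $\#_k S^1\times S^2$ (Laudenbach) together with a Haken-type minimization against the decomposing spheres, or simply invoke Lemma 8 of \cite{bridge_trisections_4M} as the paper does. There is also a secondary issue you flag but do not resolve: $D_1^\partial$ and $D_2^\partial$ generally intersect in their interiors, so $D_1^\partial\cup D_2^\partial$ is not an embedded sphere to begin with; the paper sidesteps this by moving both properly embedded disks into the boundary and performing the innermost-disk surgery there, whereas your ``assemble $B_1\cup B\cup B_2$'' step still has to be carried out after the disks are disjointed, which is non-trivial. The overall strategy is salvageable, but as written the prime-summand step is the place where the proof would break.
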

\begin{proof} 
The argument in the third paragraph of the proof of Lemma 8 of \cite{bridge_trisections_4M} proves the following statement: ``Suppose $A_1$ and $A_2$ are 4-manifolds with connected boundary satisfying (1) $A_i$ has unique trivial disks (up to isotopy relative their boundary), and (2) any embedded sphere in $\partial A_i$ bounds a properly embedded 3-ball in $A_i$. Then $A_1 \natural A_2$ has unique trivial disks.'' 
Also in Lemma 8 of \cite{bridge_trisections_4M} is proven that 1-handlebodies satisfy properties (1) and (2). Hence, using the decomposition of Lemma \ref{lem_eq_1}, it is enough to check uniqueness of trivial disks for $F \times D^2$ where $F$ is a connected closed surface of positive genus. 

Let $F\neq S^2$ be a closed surface and let $L$ be an unlink in $F\times S^1$. Suppose $D$ and $D'$ are two collections of properly embedded trivial disks in $F\times D^2$ with boundary equal to $L$. We want to show that they are isotopic relative to $L$. There exist a collection of disks $D_*$ (resp. $D'_*$) in $F\times S^1$ isotopic to $D$ (resp. $D'$) via an isotopy fixing $L$. Perturb $D_*$ and $D'_*$ to intersect transversely (away from $L$) in a finite set of loops. 
Suppose $int(D_*)\cap int(D'_*)\neq \emptyset$. Let $E\subset D_*$ be a disk with interior disjoint from $D'_*$ and boundary a loop in $int(D_*)\cap int(D'_*)$. Let $E'$ be the disk in $D'_*$ with same boundary as $E$. Since $F\times S^1$ is irreducible, the embedded 2-sphere $E\cup E'$ bounds a 3-ball $B\subset F\times S^1$. Push the interior of $B$ into $int(F\times D^2)$ to make it disjoint from $D_*$ and $D'_*$, we can do this since $D_*\cup D'_*$ lies in $F\times S^1$. Isotope $E'$ through $B$ to remove the intersection loop $E\cap E' \subset int(D_*)\cap int(D'_*)$. We can perform this type of isotopy until $int(D_*)\cap int(D'_*)=\emptyset$. Again, the sphere $D_*\cup D'_*$ bounds a collection of 3-balls in $F\times S^1$ which we can use (as before) to isotope $D_*$ onto $D'_*$ as desired.
\end{proof} 


\subsection{Construction 2} \label{subsection_Construction_2}
Recall that a 3-dimensional compression $C$ body can be built from its positive boundary $F$ by attaching 2-handles along $F\times\{1\}\subset F\times[0,1]$ and 3-handles along the resulting spherical components. 
{Thus $C\times [0,1]$ is obtained from $F\times [0,1]^2$ by attaching 4-dimensional 2-handles along loops in $F\times\{1\}\times \{1/2\}$ with framing given by the interval direction, and 4-dimensional 3-handles along the spheres obtained by compressing $F\times \{1\}\times\{1/2\}$ by the 2-handles. }

Equation \ref{eq_0} of the previous subsection implies that $\wt Z$ can be built from $\wt C_{all}\times [0,1]$ by attaching 4-dimensional 2-handles along the loops $\wt \delta_0\times\{1/4\}\subset \wt F_{all}\times\{1/4\}$ and $\wt \delta_1\times\{3/4\}\subset \wt F_{all}\times\{3/4\}$ with framing given by the surface, and attaching 4-dimensional 3-handles along any 2-sphere obtained from compressing $\wt F_{all} \times \{1/4+i/2\}$ along $\wt \delta_i\times\{1/4+i/2\}$.

\textbf{Suppose $\wt F_{all}$ is non-empty.} We can build $\wt C_{all}\times[0,1]$ as above using the surface $F_{k,b_0}$, the curves $\wt \delta_{all}$, and no 3-handles. Furthermore, we can `double' the 2-handles at the expense of adding 4-dimensional 3-handles as follows
\begin{align}\label{eq_C_all}
\wt C_{all}\times[0,1]\text{ } \approx&\text{ } F_{k,b_0}\times [0,1]^2 \cup \left( \text{2-handles } \wt \delta_{all}\times\{1\}\times\{1/4,3/4\}\right) \cup \left(\text{3-handles}\right).
\end{align}
The cores 2-spheres of the 3-handles in the previous equation are built by connecting the core disks of the 2-handles $\wt \delta_{all}\times\{1\}\times\{1/4,3/4\}$ with the annuli $\wt\delta_{all}\times\{1\}\times[1/4,3/4]$.
For $i=0,1$, $F_{k,b_0}\times\{1\}\times\{1/4+i/2\}$ compresses to a copy of $\wt F_{all}$. We can then glue copies of $\wt C_i\times [0,1]$ to $\wt C_{all}\times[0,1]$ in the form of 4-dimensional 2-handles and 3-handles as in the previous paragraph.
Therefore, via Equation \ref{eq_0} we can conclude
\begin{align}\label{eq_construction_2}
\wt Z \approx& F_{k,b_0}\times [0,1]^2 \cup \left( \text{2-handles } \wt \delta^0\times\{1\}\times\{1/4\} \text{ and } \wt \delta^1\times\{1\}\times\{3/4\}\right) \cup \left(\text{3-handles}\right),
\end{align}
 where the 3-handles are attached along the sphere components obtained from compressing $F_{k,b_0}\times\{1\}\times\{1/4+i\}$ along $\wt \delta^i\times\{1\}\times\{1/4+i/2\}$, and the spheres built from the common curves in $\wt \delta_{all}$.

\begin{remark} \label{remark_if_F_all_empty}
If $\wt F_{all}=\emptyset$, in order to build $\wt C_{all}$ you must add one final 3-dimensional 3-handle after attaching 2-handles to $F_{k,b_0}\times[0,1]$ along $\wt \delta_{all}\times\{1\}$. This final 3-handle becomes a 4-dimensional 3-handle when forming the product $\wt C_{all}\times[0,1]$  (see Figure \ref{fig_C_allxI}), and can be `doubled' at the expense of adding one 4-dimensional 4-handle (in the form of a 3-4-cancelling pair).
In this situation, $\wt C_0$ and $\wt C_1$ are both empty, $F_{k,b_0}=F_{k}$ is a closed surface, and $\wt C=\wt C_{all}$ is a 3-dimensional handlebody. Hence $\wt Z = \wt C_{all}\times[0,1]$ is a 4-dimensional 1-handlebody with boundary admitting a Heegaard splitting of the form $\wt C_{all}\cup_{F_{k}} \wt C_{all}$. 
The resulting model of $\wt Z$ is the standard 4-dimensional piece of a `classic' trisection of a closed, connected 4-manifold.
\begin{figure}[h]
\centering
\includegraphics[scale=.07]{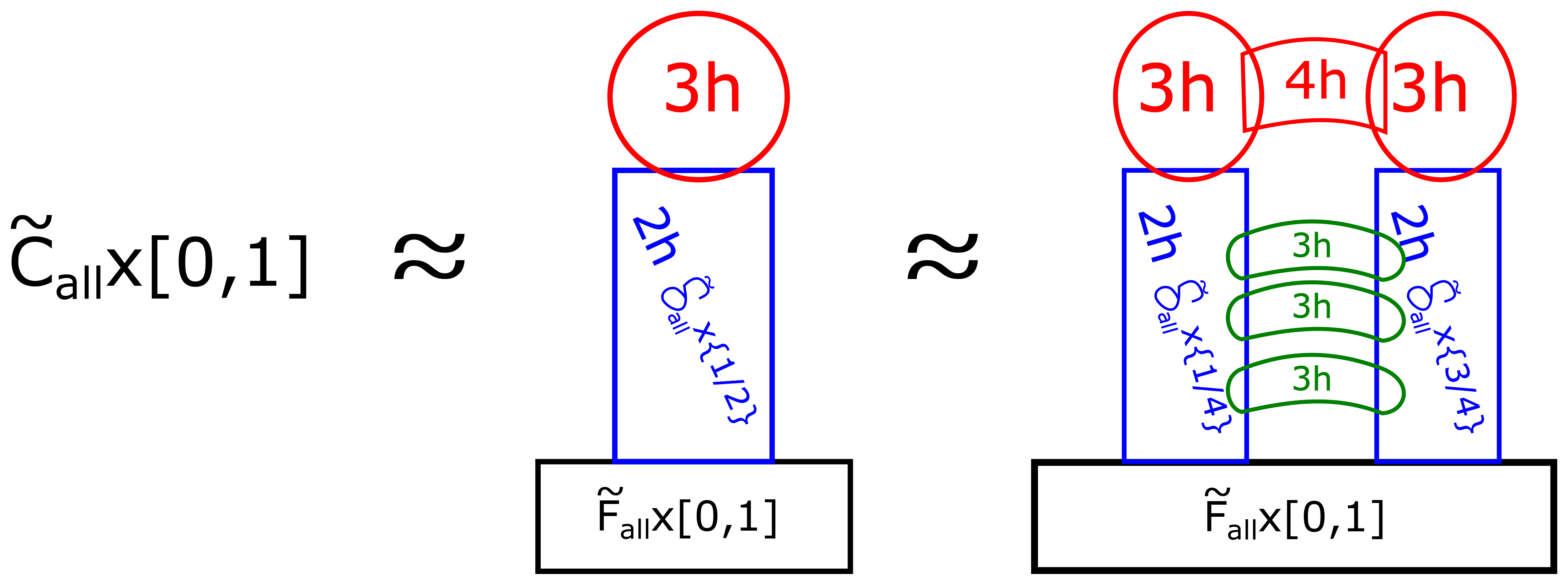}
\caption{A schematic of $\wt C_{all}\times[0,1]$ after `doubling' its 2-handles and 3-handle. In this paper we will normally find $\star$-trisections with $\wt F_{all}\neq \emptyset$. In such cases, we don't add the 3-handles and 4-handle in red in the figure and obtain the description in Equation \ref{eq_C_all}.}
\label{fig_C_allxI}
\end{figure}
\end{remark}

\subsection{The unlink $U$ and the standard piece $Z$}\label{subsection_unlink_U}

We are ready to discuss the 4-manifold $Z$ in detail. 
We keep using the same notation as before.

Let $\wt Y$ be the 3-manifold given by the Heegaard splitting $\wt Y=\wt C^1 \cup_{F_{k,b_0}} \wt C^0$, and let $U\subset \wt Y$ be an unlink of unknots.
$\wt Y$ embeds into $\partial \wt Z$ as 
\[\wt Y\approx\left( \wt C^0 \times \{0\}\right) \cup \left( F_{k,b_0}\times [0,1]\right)\cup  \left(\wt C^{1}\times \{1\}\right)\subset \partial \wt Z.\]
Lemma \ref{lem_unique_trivial_disks} states that $U \subset \partial \wt Z$ bounds a unique\footnote{Up to isotopy relative to the boundary.} collection of properly embedded disks $D\subset \wt Z$ that can be isotoped (fixing $U$) into embedded disks in $\partial \wt Z$. 
Define $Z$ to be a copy of $\wt Z$ after removing an open neighborhood of $D$. The facts that the disks are parallel to the boundary and $\wt Y$ is connected imply that $Z$ is diffeomorphic to $\wt Z \natural \left(\natural_{|U|} S^1\times B^3 \right)$.
In what follows, we will see that the 4-manifold $Z$ can be built as in Subsection \ref{subsection_Construction_2} using a new pair of compression bodies.

Isotope $U$ in $\wt Y$ to be in bridge position with respect to the given Heegaard splitting. This means that for $i=0,1$ the intersection $U\cap \wt C^i$ consists of properly embedded arcs in $\wt C^i$ which can be projected to embedded arcs in $F_{k,b_0}=\partial_+ \wt C^i$. 
We will refer to such projected arcs in $F_{k,b_0}$ as shadows of $Y\cap \wt C^i$. 
The number of arcs in $U\cap \wt C^0$ is called the number of bridge of $U$. Notice that $U\cap F_{k,b_0}$ is a set of $2|U\cap \wt C^0|$ points. 

Let $Y$ be the complement of $U$ in $Y$ and $C^i:= \wt C^i -\eta (U)$. The intersection $C^0\cap C^1$ becomes a copy of $F_{k,b_0}$ with $2|U\cap \wt C^0|$ open disks removed. Furthermore, each $C^i$ is a compression body with positive boundary the surface $F_{k,b}$, where $b=b_0+2|U\cap \wt C^0|$; and $Y=C^0\cup_{F_{k,b}}C^1$ is a Heegaard splitting.

Since $U$ is an unlink of unknots, we can think of $Y$ as the connected sum of $\wt Y$ with the complement of an unlink $U'$ in $S^3$. Using Haken's lemma we can choose the connected sum sphere $S$ to intersect $F_{k,b}$ in one loop. Thus the loops in $F_{k,b_0}\subset \wt Y$ bounding disks in $\wt C^i$ can be identified with loops in $F_{k,b}$ bounding disks in $C^i$ on one side of such separating sphere.
We can further use Haken's lemma to find pairwise disjoint spheres $S_*$, away from $S$, decomposing $S^3- \eta(U')$ into a connected sum of complements of unknots in $S^3$ in bridge position, satisfying that each component of $S_*$ intersects $F_{k,b}$ in one loop.

In conclusion, we can find sets of curves $\delta^i\subset F_{k,b}$ determining the compression body $C^i$ so that the loops $\wt \delta^i$ and $\left(S_*\cap F_{k,b}\right)$ are contained in $\delta^i$. Define $\delta_{all}=\wt \delta_{all} \cup \left( S_* \cap F_{k,b}\right)$. 
Observe that the curves in $\delta^i$ consist in the loops in $\wt \delta^i$, together with the new loops in $\left(S_*\cap F_{k,b}\right)$ and loops obtained from the complement of each unknot component of $U$. 
The proof of the following lemma (Figure \ref{fig_perturbation}) shows us how complicated these new loops can be.

\begin{lem} \label{lem_contruction_2_for_Z}
The 4-manifold $Z$ can be built as in Equation \ref{eq_construction_2} with $F_{k,b}$ and the loops $\delta^0$, $\delta^1$ and $\delta_{all}$. More precisely, 
\begin{align}\label{eq_construction_2_final}
Z\quad  \approx&\quad  F_{k,b}\times [0,1]^2 \cup \left( \text{2-handles } \delta^0\times\{1\}\times\{1/4\} \text{ and }  \delta^1\times\{1\}\times\{3/4\}\right) \cup \left(\text{3-handles}\right),
\end{align}
 where the 3-handles are attached along the sphere components obtained from compressing $F_{k,b}\times\{1\}\times\{1/4+i\}$ along $\delta^i\times\{1\}\times\{1/4+i/2\}$ for $i=0,1$, and the spheres built from the common curves in $\delta_{all}$.
\end{lem}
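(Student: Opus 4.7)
The plan is to adapt Construction 2 for $\wt Z$ (Equation \ref{eq_construction_2}) to the carved manifold $Z = \wt Z \setminus \eta(D)$, tracking how the disk carving modifies the handle decomposition. Since Lemma \ref{lem_unique_trivial_disks} guarantees that the boundary-parallel disks $D$ are unique up to isotopy relative to $U$, I am free to pick any convenient representative. I would take $U$ in bridge position with respect to the Heegaard splitting $\wt Y = \wt C^0 \cup_{F_{k,b_0}} \wt C^1$, and realize $D$ as a union of bridge disks sitting in $\wt C^0 \times \{0\}$ and $\wt C^1 \times \{1\}$, joined by product rectangles in $F_{k,b_0} \times [0,1]$ that follow the shadows of the bridge arcs of $U$.

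With such a representative, carving $\eta(D)$ has a clean local effect on the pieces appearing in Equation \ref{eq_construction_2}: it punctures $F_{k,b_0}$ at each point of $U\cap F_{k,b_0}$, producing the surface $F_{k,b}$ with $b = b_0 + 2|U\cap \wt C^0|$ boundary components, and it turns each bridge arc in $\wt C^i$ into a boundary-parallel tube whose compressing data is captured by a new simple closed curve on $F_{k,b}$. These new curves are exactly the loops added to $\delta^i$ relative to $\wt\delta^i$ in the discussion preceding the lemma, so $C^i$ is precisely the compression body on $F_{k,b}$ determined by $\delta^i$. Next, I would invoke Haken's lemma to produce pairwise disjoint spheres $S_*$ splitting $Y$ as the connect sum of $\wt Y$ with $|U|$ copies of $S^3\setminus \eta(U')$, each sphere meeting $F_{k,b}$ in a single simple closed curve that bounds disks in both $C^0$ and $C^1$. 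These curves must therefore lie in $\delta^0 \cap \delta^1$ and are placed in $\delta_{all}$. The 3-handles of Equation \ref{eq_construction_2_final} attached along the spheres built from $\delta_{all}$ then account for the 3-balls filling the separating spheres $S_*$, implementing the connect-sum decomposition at the level of the 4-manifold.

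The main obstacle is the local verification that the collection of loops added to $(\delta^0, \delta^1, \delta_{all})$ in a neighborhood of each bridged unknot component actually reconstructs, through the right-hand side of Equation \ref{eq_construction_2_final}, a copy of $S^1 \times B^3$. I would handle this by restricting to the complement $S^3\setminus \eta(U')$ of a single unknot in minimal bridge position and running Construction 2 from Subsection \ref{subsection_Construction_2} on the induced Heegaard splitting: the resulting 2-handles and 3-handles cancel down to the expected $1$-handlebody. The global statement then follows by assembling these local models with the Construction 2 decomposition of $\wt Z$, using the identification $Z \approx \wt Z \natural \left(\natural_{|U|} S^1 \times B^3 \right)$ established earlier in this subsection.
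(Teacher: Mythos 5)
Your outline matches the paper's treatment of the base case, where each component of $U$ has bridge number one: there the spheres $S_*=\partial\eta(D_*)$ meet $F_{k,b}$ in one curve each, those curves go into $\delta_{all}$, and a local check on each planar piece shows the right-hand side reproduces $S^1\times B^3$ per unknot component. So far this is essentially the paper's first paragraph.

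The gap is that you restrict the crucial local verification to ``a single unknot in minimal bridge position,'' but the lemma must hold for whatever bridge position $U$ happens to be in, since the curve systems $\delta^0,\delta^1,\delta_{all}$ are \emph{defined} from that bridge position. The uniqueness statement of Lemma~\ref{lem_unique_trivial_disks} lets you choose a nice representative of the disks $D$, not a nice bridge position for $U$; once $U$ has a $k$-bridge component with $k>1$, the curves added to $\delta^0$ and $\delta^1$ are genuinely more numerous and more tangled (see Figure~\ref{fig_perturbation} and the sentence preceding the lemma, ``The proof of the following lemma \ldots shows us how complicated these new loops can be''), and your local model does not address them. The paper closes this gap with a second paragraph that you omit: it invokes Otal and Hayashi--Shimokawa to say every bridge position of an unlink of unknots is perturbed, shows that a single perturbation corresponds to adding two $1$-$2$-cancelling pairs of $4$-dimensional handles on the right-hand side of Equation~\ref{eq_construction_2_final}, and then inductively unperturbs $U$ down to the bridge-one case without changing the diffeomorphism type of the resulting $4$-manifold. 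Without an argument of this kind -- or a direct computation for arbitrary bridge number -- the reduction to the minimal case is unjustified, and the proof as written only establishes the lemma for the special bridge position rather than the general one claimed.
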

\begin{proof} 
Suppose first that each component of $U$ has bridge number one. Since $U$ is an unlink, we can find a set of disks $D_*\subset Y$ bounded by $U$ intersecting $F_{k,b}$ in a single arc per disk in $D_*$. The spheres $S_*$ in $Y$ can be chosen to be $\partial \eta (D_*)$. 
In this case, the curves determining the compression bodies are given by $\delta^i=\wt \delta \cup \left( S_* \cap F_{k,b}\right)$. Using such curves, the right hand side of Equation \ref{eq_construction_2_final} is diffeomorphic to $\wt Z \natural \left(\natural_{|U|}S^1\times B^3\right)\approx Z$, which is what we want. One way to see this is to first note that if $c\subset \delta_{all}$ is a separating curve decomposing $F_{k,b}$ as $F_{k_1,b_1}\#_c F_{k_2,b_2}$, then the 4-manifold from the right hand side decomposes as a boundary connected sum of 4-manifolds built in the same way from the surfaces $F_{k_j,b_j}$ ($j=1,2$); and then check that the 4-manifold obtained from an annulus $F_{0,2}$ and $\delta^0=\delta^1=\delta_{all}=\emptyset$ is $S^1\times B^3$.

We now proceed with the general case. First recall that any bridge position of an unlink of unknots in a 3-manifold is always perturbed. This result was proven for $S^3$ in \cite{Otal}, and for any 3-manifold in \cite{hayashi_unknot}.
A perturbation is a local isotopy of a link in bridge position around the Heegaard surface shown in Figure \ref{fig_perturbation}. It increases the bridge number by one. Figure \ref{fig_perturbation} describes how the complement of the link changes after a perturbation, and shows us how the $\delta^i$ sets increase by one loop. The main observation is that the 4-manifolds built from the right hand side of Equation \ref{eq_construction_2_final} are diffeomorphic because the operation of removing two disks to $F_{k,b}$ and adding two curves (not in $\delta_{all}$) corresponds to add two 4-dimensional 1-2-cancelling pairs of handles (See Figure \ref{fig_perturbation}). 

Hence, for any bridge position of $U$ in $\wt Y$, after fixing a set of spheres $S_*$, we can slide the curves of $\delta^i-\delta_{all}$ over the curves in $\delta^i$ until `detecting' a pair of curves intersecting like in Figure \ref{fig_perturbation}. This pattern determines a perturbation of $U$ which we undo to decrease the bridge number, without changing the resulting 4-manifold. We can repeat this procedure until each component of $U$ has bridge number one, in which case the result is known.
Therefore, for any bridge position of $U$, Equation \ref{eq_construction_2_final} holds.
\begin{figure}[h]
\centering
\includegraphics[scale=.2]{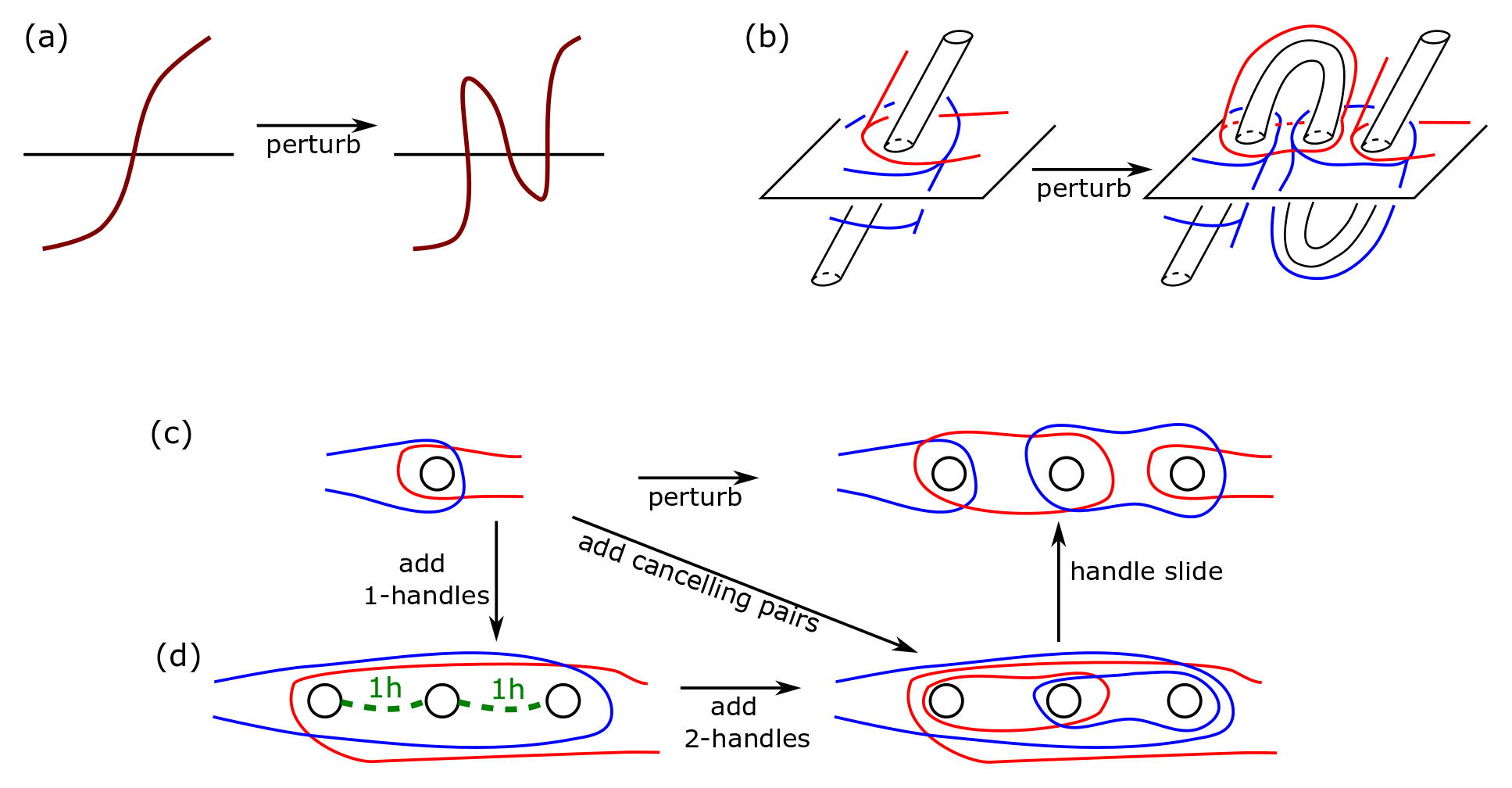}
\caption{(a) An image of a perturbation of $U$ around a point $U\cap F$. (b) How a perturbation changes the complement of $U$ in both compression bodies. (c) A perturbation changes the collections of disks $\delta^i$ by adding a curve to each of them. (d) A perturbation changes the decomposition of the 4-dimensional piece by adding two 1-2-cancelling pairs of handles.}
\label{fig_perturbation}
\end{figure}
\end{proof}

\begin{remark}[Stabilization]\label{remark_constructions_standard_piece}
For an integer $g\geq k$, we can stabilize the Heegard splitting of $Y=C^0\cup_{F_{k,b}} C^1$ ($g-k$) times to obtain the decomposition $Y=Y^-_{C^0, C^1} \cup Y^+_{C^0, C^1}$. This changes the Heegaard diagram $(F_{k,b}; \delta^0, \delta^1)$ by connected sum with ($g-k$) copies of the genus one diagram $(F_{1,0}; a_0, a_1)$, where $a_0$ and $a_1$ are loops intersecting once. 
We write the new diagram as $(\Sigma;\Delta^0, \Delta^1)$ where $\Sigma=F_{g,b}$ and $\Delta^i=\delta^i\cup \delta^i_{stab}$ and $(\delta^0_{stab}, \delta^1_{stab})$ correspond to the new dual loops. 
Note that stabilizations on the diagram correspond to adding two 1-2-cancelling pairs of handles to the 4-manifold built in Equation \ref{eq_construction_2_final}. Therefore, using Lemma \ref{lem_contruction_2_for_Z} we conclude  
\begin{align}\label{eq_construction_2_stab}
Z\quad  \approx&\quad  \Sigma\times [0,1]^2 \cup \left( \text{2-handles } \Delta^0\times\{1\}\times\{1/4\} \text{ and }  \Delta^1\times\{1\}\times\{3/4\}\right) \cup \left(\text{3-handles}\right),
\end{align}
where the 3-handles are attached along the sphere components obtained from compressing $\Sigma\times\{1\}\times\{1/4+i\}$ along $\Delta^i\times\{1\}\times\{1/4+i/2\}$ for $i=0,1$, and the spheres 
built by connecting the core disks of the 2-handles $\delta_{all}\times\{1\}\times\{1/4,3/4\}$ with the annuli $\delta_{all}\times\{1\}\times[1/4,3/4]$. Recall that we need to add extra 3-handles and 4-handles if $\wt F_{all}$ is empty (see Remark \ref{remark_if_F_all_empty}). 
Thus, if both $C^0$ and $C^1$ are handlebodies (i.e., have empty negative boundary) then one of the boundary components of the resulting 4-manifold is a copy of $S^4$, which we cap-off uniquelly with one 4-handle. This last case occurs for classical trisections of closed 4-manifolds.

Denote by $C_{all}$ the compression body built from $\Sigma\times [0,1]$ using the loops $\Delta_{all}=\Delta^0\cap \Delta^1$ in $\Sigma$, and by $F_{all}$ the negative boundary of $C_{all}$. 
By construction $F_{all}$ is homeomorphic to a copy of $\wt F_{all}$, together with the disjoint union of planar surfaces (complements of the bridge spheres for each component of $U$). 
Denote by $C_0$ (resp. $C_1$) the compression body obtained from $F_{all}$ and the curves in $\Delta_i=\Delta^i-\Delta_{all}$. 
Starting from Equation \ref{eq_construction_2_stab}, we can undo the `doubling' procedure in Subsection \ref{subsection_Construction_2} and conclude that $Z$ is diffeomorphic to the union
\begin{align}\label{eq_construction_1_stab}
Z \quad \approx& \quad C_0 \times [0,1/2] \bigcup_{F_{all}\times[0,1/2]} C_{all}\times [0,1] \bigcup_{F_{all}\times[1/2,1]} C_1 \times [1/2,1].
\end{align}
Moreover, using Figure \ref{model_Z}, we can check that $Z$ is diffeomorphic to 
\begin{align}\label{eq_construction_0_stab}
Z\quad \approx &\quad Y^-_{C^0, C^1}\times[0,1/2] \bigcup_{C_{all}\times \{1/2\}} Y^+_{C^0, C^1}\times [1/2,1].
\end{align}
From here, we can perform the isotopy described in Subsection \ref{subsection_Construction_1} and conclude that $Z$ is diffeomorphic to the union 
\begin{align}\label{equation_construction_4_stab}
Z\text{ }\approx \text{ }\left(\left(C_0\cup_{F_{all}} C_1\right)\times [0,1] \right)\bigcup C_{all}\times [0,1],
\end{align}
where we glue a neighborhood of the surface $F_{all}\times \{0\}$ in $\left( C_0\cup_{F_{all}} C_1\right)\times \{0\}$ with the product $F_{all}\times [0,1]$ in $C_{all}\times [0,1]$.
\end{remark}

\begin{remark}[Diagramatics]\label{remark_diagrams_standard_piece}
The previous equations show us several equivalent ways of building the standard 4-manifold piece $Z$ using the compression bodies $C^0$, $C^1$ and $C_{all}$ which are determined by the tuple $(\Sigma;\Delta^0, \Delta^1, \Delta_{all})$ up to isotopies and handle slides. The combinatorics of the loops $\Delta^0$ and $\Delta^1$ in $\Sigma$ is summarized in Figure \ref{Fig_standard_star_diagram}. 
The curves on each set $\Delta^i$ are pairwise disjoint and the sets decompose as $\Delta^i = \delta^i_{stab} \cup\Delta_{all}\cup \Delta_i$. The curves in $\delta^0_{stab}$ and $\delta^1_{stab}$ are in bijective correspondence and only the paired loops intersect in exactly one point while the rest are pairwise disjoint.
The curves in $\Delta_{all}$ are in both $\Delta^0$ and $\Delta^1$ and are pairwise disjoint. 
There are two types of families of loops in $\Delta_0\cup \Delta_1$: the fist type is formed of non-isotopic pairwise disjoint loops (identified with loops in $\wt\delta_i\subset F_{k,b_0}$); the loops in the second type form Heegaard splittings of the complement of an unknot in bridge position (obtained from the trivial arcs of each component of $U$).
\begin{figure}[h]
\centering
\includegraphics[scale=.5]{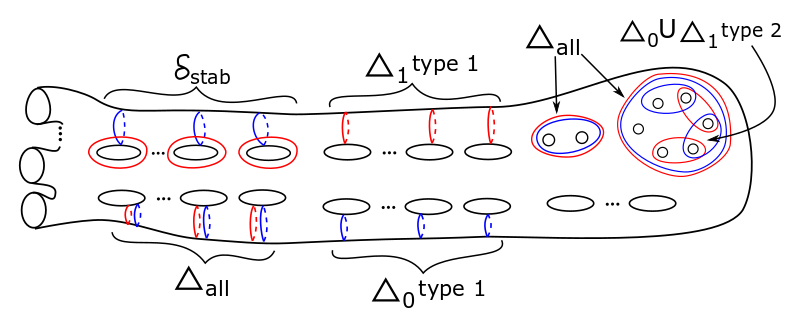}
\caption{The combinatorics of the tuple $(\Sigma; \Delta^0, \Delta^1, \Delta_{all})$. For this example, observe that $U$ has two components, one of which has bridge number one and the other bridge number three.}
\label{Fig_standard_star_diagram}
\end{figure}
\end{remark}

%
%


\subsection{The boundary of $Z$}\label{subsection_boundary_Z}
Let $C$ be a compression body built from $F\times[0,1]$ by attaching some 3-dimensional 2-handles along $F\times\{1\}$. Recall that the boundary of $C$ is given by copies of its positive boundary ($F\times \{0\}$) and its negative boundary (the non-spherical components of $F\times\{1\}$ after compression), connected by annuli of the form $\partial_0 C:= \left(\partial F\right)\times [0,1]$. 

Let $B:= \left(\partial F\right)\times\{1/2\}\times\{1/2\}$ be the core of the solid torus $\left(\partial_0 C\right)\times[0,1]$ inside $\partial\left(C\times[0,1]\right)$. The 3-manifold $\partial \left(C\times [0,1]\right) -\eta(B)$ can be built by taking two copies of $C$ and gluing their negative (and positive) boundaries together using the identity map. 
Such splitting induces a circular handle decomposition of $\partial \left( C\times [0,1]\right)$ with binding $B$, built from $F\times[0,1]$ by attaching 2-handles on both sides $F\times\{0,1\}$ and identifying the resulting non-spherical components\footnote{Recall we always attach 3-handles along the new sphere components. In particular, if $C$ is a handlebody this contruction yields the classical handle decomposition of $\# S^1\times S^2$ which we still think of circular.} (see Figure \ref{Fig_boundary_part_1}). 
Using Equation \ref{eq_construction_0_stab}, we can glue the circular handle decompositions for $\partial\left(Y^-\times[0,1/2]\right)$ and $\partial\left(Y^+\times[1/2,1]\right)$ together along $C_{all}\times\{1/2\}$ to find a circular handle decomposition of the boundary of a standard piece $Z(C^0,C^1,C_{all})$. We draw them in Figures \ref{Fig_boundary_part_1} and \ref{Fig_boundary_part_2}. 
\begin{figure}[h]
\centering
\includegraphics[scale=.1]{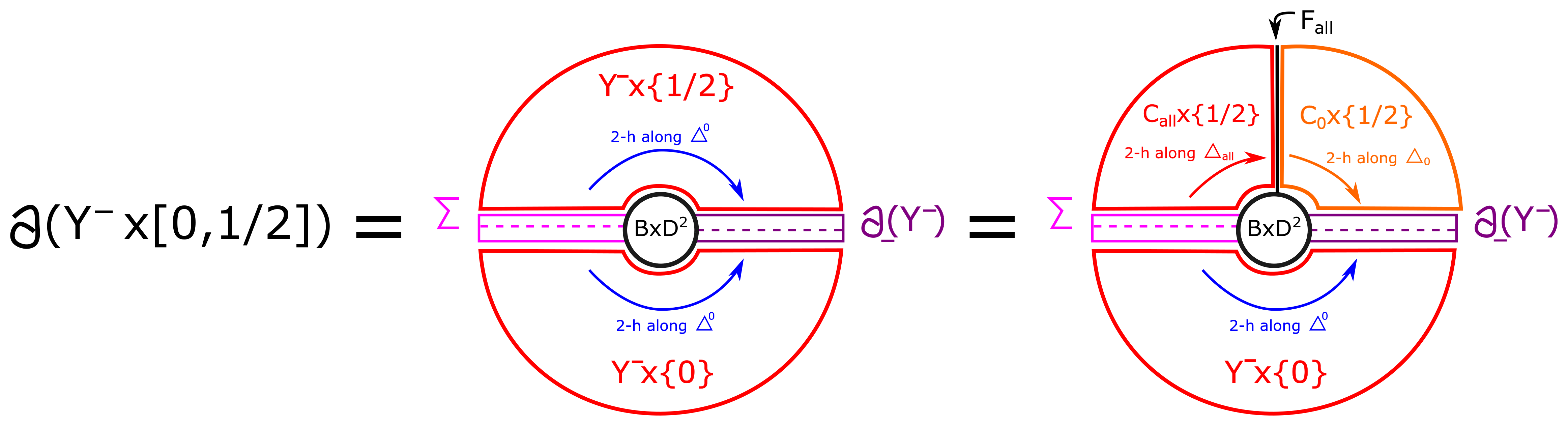}
\caption{Circular handle decomposition on $\partial \left(Y^-\times[0,1/2]\right)$. We can further decompose one copy of $Y^-$ as $C_{all}\cup_{F_{all}} C_0$.}
\label{Fig_boundary_part_1}
\end{figure}

\begin{figure}[h]
\centering
\includegraphics[scale=.075]{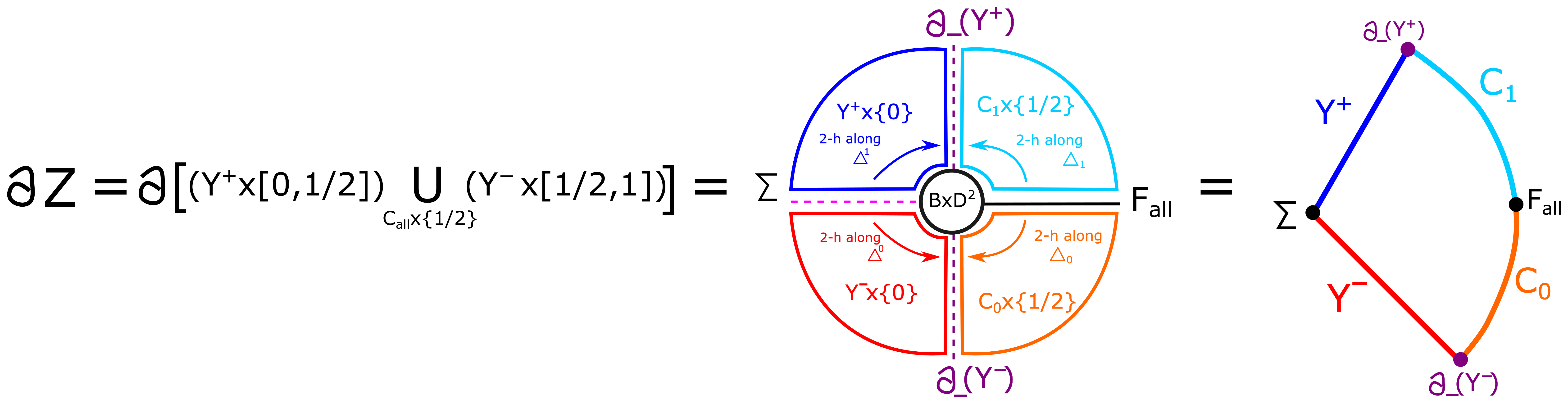}
\caption{How to build $\partial Z$ using the tuple $\left(\Sigma;\Delta^0, \Delta^1\right)$. Compare with right side of Figure \ref{model_Z}.}
\label{Fig_boundary_part_2}
\end{figure}

\begin{remark}[Boundary of $X$]\label{remark_boundary_of_X}
In a $\star$-trisected 4-manifold $X=X_1\cup X_2\cup X_3$, the pieces labeled $Y^\pm$ on the boundary of all standard pieces $X_i$ are glued together. This implies that $X_i\cap \partial X$, away from $\eta(B)$, is of the form $C_0\cup_{F_{all}} C_1$ (see Figure \ref{Fig_boundary_part_2}) which is the disjoint union of Heegaard splittings with Heegaard surfaces being the components of $F_{all}$. 
Thus the boundary of $X$ admits a circular handle decomposition with binding the boundary of the trisection surface $B=\partial \Sigma$. The decomposition on $\partial X- \eta(B)$ can be obtained by taking copies of the 3-manifolds $C_0\cup_{F_{all}}C_1$ obtained from each $X_i$ and gluing $\partial_-C_0$ from $X_i$ with $\partial_-C_1$ from $X_{i+1}$ as in Figure \ref{star_gluing_maps}.
\end{remark}


\section{Trisection diagrams}\label{subsection_trisection_diagrams}
In this section we will describe the diagramatics of $\star$-trisections. We will compare our notion of trisection diagrams with the existent literature. We end this section by discussing a diagramatic operation on $\star$-trisections with boundary called poking.

Let $X=X_1\cup X_2\cup X_3$ be a $\star$-trisected 4-manifold. The trisection surface $\Sigma=X_1\cap X_2 \cap X_3$ is a connected orientable surface of genus $g$ and $b\geq 0$ boundary components. The pairwise intersection $X_i\cap X_j$ is a compression body with positive boundary the surface $\Sigma$, denote the compresion bodies by $H_{\alpha}=X_1\cap X_3$, $H_{\beta}=X_2\cap X_3$ and $H_{\gamma}=X_1\cap X_2$. 
By definition, the 4-manifold piece $X_1$ is diffeomorphic to the 4-manifold $Z(H_{\gamma},H_{\alpha},H_{\gamma\cap \alpha})$ for a given comon subcompression body $H_{\gamma\cap\alpha}$ of $H_{\gamma}$ and $H_{\alpha}$. Let $\Delta_{\gamma\cap\alpha}$ be loops in $\Sigma$ determining the compression body $H_{\gamma,\alpha}$. We define $H_{\alpha\cap \beta}$, $\Delta_{\alpha\cap \beta}$, $H_{\beta\cap \gamma}$ and $\Delta_{\beta\cap\gamma}$ in a similar way. Let $\alpha$, $\beta$ and $\gamma$ be loops in $\Sigma$ determining the compression bodies $H_{\alpha}$, $H_{\beta}$ and $H_{\gamma}$, respectively. 

Due to Equation \ref{eq_construction_2_stab}, we can use the tuple $(\Sigma; \alpha, \beta, \gamma; \Delta_{\gamma\cap \alpha}, \Delta_{\alpha\cap \beta}, \Delta_{\beta\cap\gamma})$ to build the 4-manifold $X$ as follows: 
Let $p_\alpha^\pm$, $p_\beta^\pm$, $p_\gamma^\pm$ be six distinct points in the unit circle as in Figure \ref{Fig_building_X}. Start with $\Sigma\times D^2$ and glue copies of collars of the compresion bodies\footnote{This is equivalent to add 4-dimensional 2-handles to $\Sigma\times D^2$ along $\eps\times \{p_\eps\}$ with framing given by the surface, and 3-handles along the sphere components obtained by compressing $\Sigma\times\{pt_\eps\}$ for $\eps\in\{\alpha,\beta,\gamma\}$.} 
$H_\eps\times[p_\eps^-,p_\eps^+]$ 
along the arcs $[p_\eps^-,p_\eps^+]\times \Sigma$ for $\eps \in\{\alpha,\beta,\gamma\}$. 
For each pair $(\eps,\mu)\in\{(\gamma,\alpha), (\alpha,\beta), (\beta, \gamma)\}$, the compression bodies $H_\eps \times\{p_\eps^\pm\}$ are contained in the boundary of the resulting 4-manifold, and the `comon' curves $\Delta_{\eps\cap\mu} \times\{p_\eps^-,p_\mu^+\}$ bound disks on them. 
Attach 4-dimensional 3-handles with core spheres given by connecting such disks with the annuli $\Delta_{\eps\cap\mu}\times[p_\eps^-,p_\mu^+]$.
If one of the common compression bodies has empty negative boundary (see Remark \ref{remark_if_F_all_empty}), say $\partial_-H_{\alpha\cap\beta}=\emptyset$, after adding the 3-handles, the new boundary will have a component homeomorphic to $S^3$ in the interval $[p_\alpha^-, p_\beta^+]$. In such case, we attach a 4-dimensional 4-handle to cap-off the $S^3$ component.
The end result is our trisected 4-manifold $X$.
\begin{figure}[h]
\centering
\includegraphics[scale=.1]{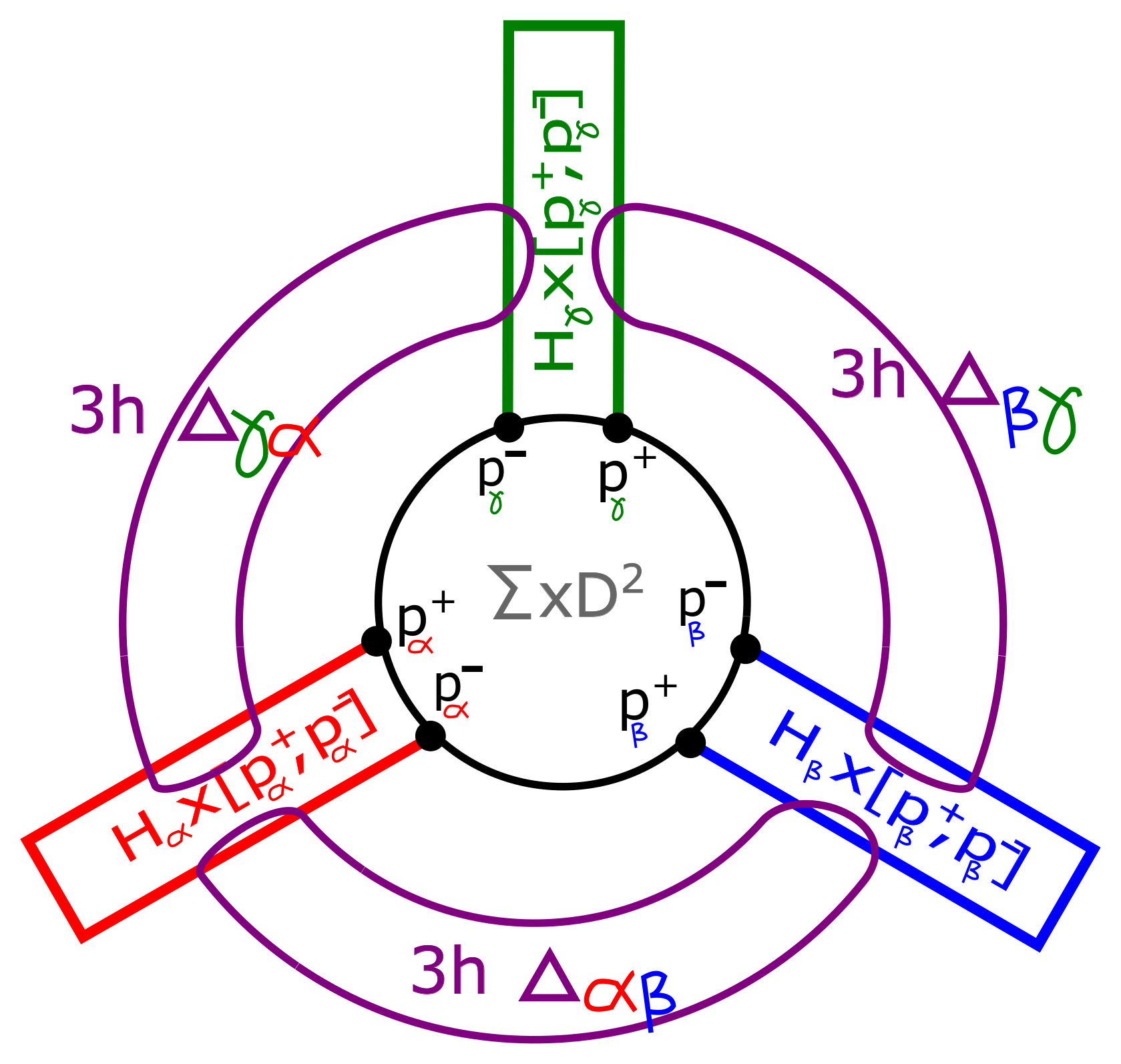}
\caption{A schematic of how to build $X$ from the tuple $(\Sigma; \alpha, \beta, \gamma; \Delta_{\gamma\cap \alpha}, \Delta_{\alpha\cap \beta}, \Delta_{\beta\cap\gamma})$. Compare this with Figure \ref{fig_C_allxI}. }
\label{Fig_building_X}
\end{figure}

\begin{remark}[Diagramatics] \label{remark_diagramatics}
We will refer to the data $(\Sigma; \alpha, \beta, \gamma; \Delta_{\gamma\cap \alpha}, \Delta_{\alpha\cap \beta}, \Delta_{\beta\cap\gamma})$ as a $\star$-trisection diagram.
The curves of a given diagram can be modified via handle slides along curves of the same label, or via homeomorphisms of the surface $\Sigma$ without changing the diffeomorphism class of the $\star$-trisected 4-manifold. 
The authors of this paper find the following interpretation of $\star$-trisection diagrams useful: A $\star$-trisection diagram is a tuple $(\Sigma;\alpha,\beta,\gamma)$ such that each pair $(\eps, \mu)\in\{(\gamma,\alpha), (\alpha,\beta), (\beta,\gamma)\}$, after adding some redundant curves to the $\eps$, $\mu$ sets, is handle slide equivalent to a standard pair $(\Delta^0, \Delta^1)$ from Remark \ref{remark_diagrams_standard_piece} with $\Delta_{\eps\cap \mu}=\Delta_{all}$.
Figure \ref{fig_levels_of_generality} shows examples of various $\star$-trisection diagrams. 
\end{remark}
\begin{figure}[h]
\centering
\includegraphics[scale=.43]{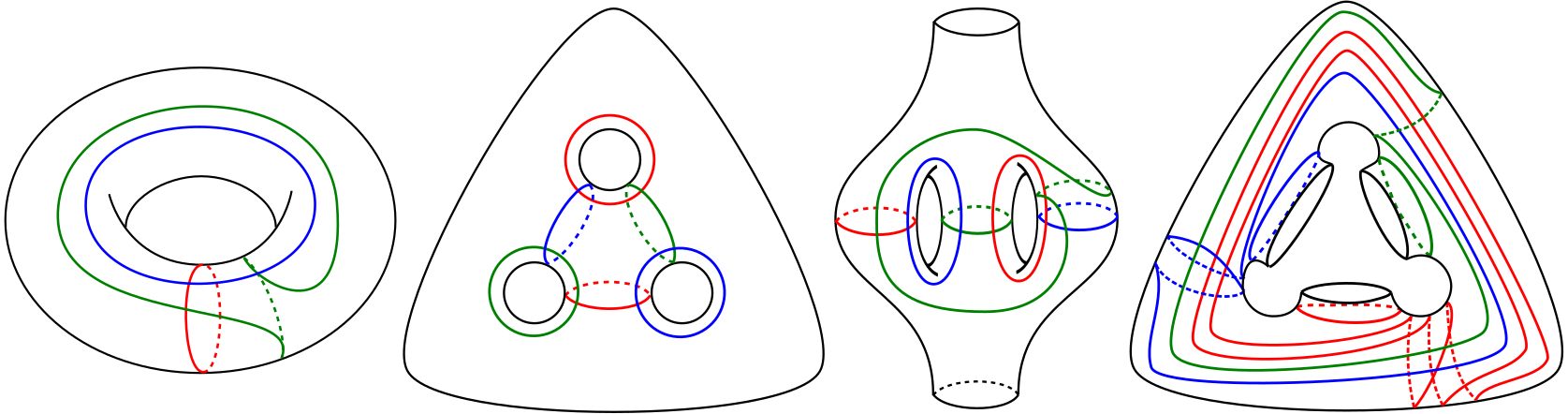}
\caption{(left to right) A trisection of $\CP2$, a relative trisection of the spin of $S^1\times D^2$, a relative trisection of a disk bundle over $S^2$ with $e=-1$, a $\star$-trisection of $\CP2\setminus\eta(S^1)$.}
\label{fig_levels_of_generality}
\end{figure}
\begin{remark}[Classic diagrams]
For \textbf{classical} trisections, each pair of curves $(\eps,\mu)$ in a relative trisection diagram is slide equivalent to a standard pair $(\Delta^0,\Delta^1)$ diffeomorphic to the one in Figure 5 of \cite{relative_trisections_2}. In particular the $\Delta^i$ sets decompose as $\Delta^i=\Delta_{all}\cup \delta^i_{stab}$, making the information of the $\Delta_{\eps\cap\mu}$ redundant. In this case the tuple $(\Sigma; \alpha, \beta,\gamma)$ is enough to build the 4-manifold $X$.
For general compression bodies $H_\eps$ and $H_\mu$ the choice of the common compression body $H_{\eps\cap \mu}$ may not be unique, forcing us to record $\Delta_{\eps\cap\mu}$ in the $\star$-trisection diagram. 
Fortunately, the choice of common curves will be easy to see for most of the $\star$-trisection diagrams that appear in this paper.
\end{remark}


\subsection{New Relative Diagrams from Old}\label{subsection_poking}

Let $\mathcal{D}=(\Sigma; \alpha, \beta, \gamma; \Delta_{\gamma\cap \alpha}, \Delta_{\alpha\cap \beta}, \Delta_{\beta\cap\gamma})$ be a $\star$-relative trisection diagram (see Remark \ref{remark_diagramatics}) for a 4-manifold $X$ with non-empty boundary. 
Let $p_\alpha, p_\beta, p_\gamma\subset \Sigma-(\alpha\cup \beta\cup \gamma)$ be three finite collections of points. Define $\mathcal{D}'=(\Sigma';\alpha', \beta', \gamma'; \Delta_{\gamma\cap \alpha}, \Delta_{\alpha\cap \beta}, \Delta_{\beta\cap\gamma})$ be a new tuple given by $\Sigma'=\Sigma-\overset{\circ}{\eta}(p_\alpha\cup p_\beta\cup p_\gamma)$ and $\eps' = \eps \cup \partial \eta (\eps)$ for $\eps =\alpha, \beta, \gamma$. 
Using Equation \ref{eq_construction_2_stab} one can show that if $\mathcal{D}'$ is a $\star$-trisection diagram, then it is a diagram for $X$. We will refer to the operation of replacing $\mathcal{D}$ by $\mathcal{D}'$ by \textbf{poking}. 

Poking a $\star$-trisection diagram adds new boundary components to the trisection surface and to the negative boundaries of the compression bodies $C_{\alpha}$, $C_\beta$ and $C_\gamma$. This operation comes handy when gluing $\star$-trisections. One condition for two $\star$-trisections to be able to be glued along their boundaries is that the surfaces $\partial_-C_\eps$ ($\eps=\alpha,\beta,\gamma$) have all non-empty boundary. 
Figure \ref{fig_poking_example_1} shows an example of how to poke a $\star$-trisection diagram in order to satisfy this condition.

In practice, one can poke a diagram by drawing one pair of curves, say $(\alpha, \beta)$, into standard position $(\Delta^0,\Delta^1)$, and then pick the poking points $p_\gamma\subset \Sigma-(\alpha \cup \beta \cup \gamma)$ whenever needed. The location of the points in $p_\gamma$ with respect to the curves in $\gamma$ is irrelevant since we think of the curves in a diagram defined up-to handle slides and we can always slide $\partial \eta (p_\gamma)$ over $\gamma$ as required. 

\begin{figure}[h]
\centering
\includegraphics[scale=.3]{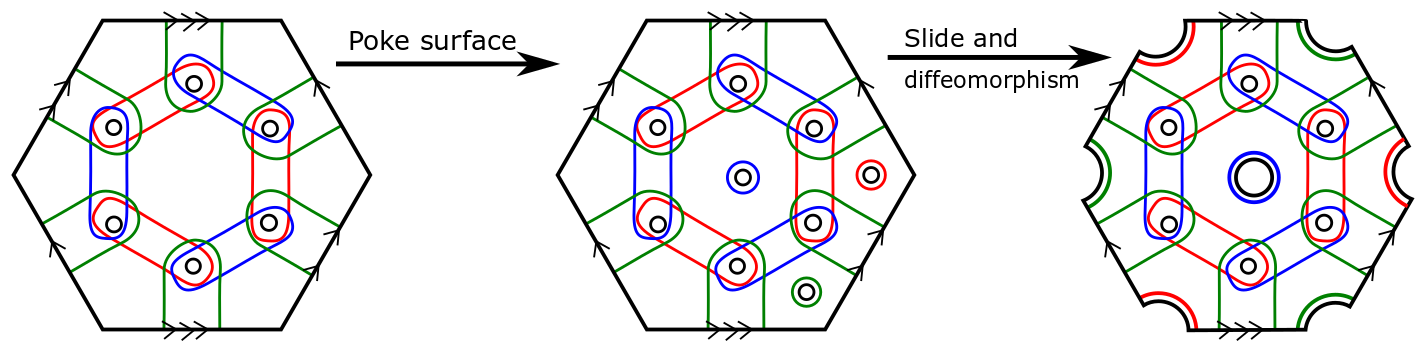}
\caption{Example of how to poke a $\star$-trisection diagram.}
\label{fig_poking_example_1}
\end{figure}


\begin{remark}[Relative Double Twists]
If we can pick points $|p_\alpha|=|p_\beta|=|p_\gamma|=1$ so that all lie in the same component of $\Sigma-(\alpha\cup \beta\cup \gamma)$, poking corresponds to connect sum the given $\star$-trisection diagram with the diagram $\tau_1$ in Figure \ref{diagrams_S2D2}. 
In this case, one could also connect sum the diagram $\tau_2$ instead and still obtain a $\star$-trisection of the same 4-manifold. 
The new diagram will change the circular handle decomposition in the boundary by taking a solid torus neighborhood of a regular circle fiber and replacing it with the Seifert fibered space $S(0,1;+1,-1)$. For relative trisection diagrams, the new monodromy is given by composing one positive and one negative Dehn twist along the two new boundary components, respectively (see Section 3.1 of \cite{OBD_from_fibrations}). 
The operation of connect summing (when possible) a relative trisection diagram with $\tau_2$ is exactly the relative double twist move defined by Castro, Islambouli, Miller and Tomova in \cite{relative_L_inv}. Using this move, the authors were able to show that any two relative trisections for the same 4-manifold with connected boundary and non-closed trisection surface ($b>0$) are related by sequence of ambient isotopies, stabilizations, relative stabilizations, relative double twists, and the inverses of these moves. 
\end{remark}

\begin{figure}[h]
\centering
\includegraphics[scale=.3]{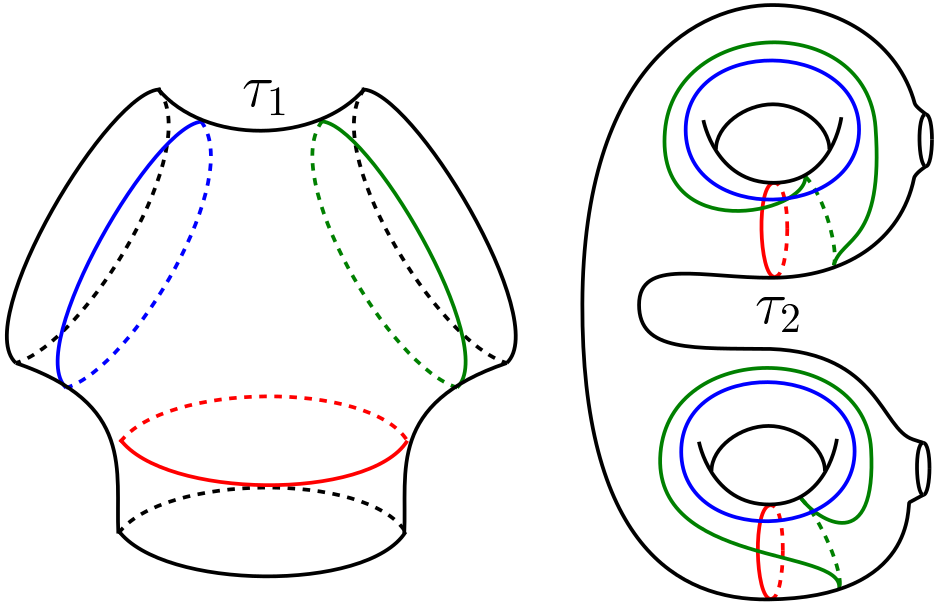}
\caption{Two relative trisection diagram for $S^2\times D^2$.}
\label{diagrams_S2D2}
\end{figure}

\section{Pasting $\star$-Trisections}\label{section_pasting}

Motivated by the work of Castro and Ozbagci \cite{pasting_rel_trisections,trisections_via_lefschetz}, we want to describe a method to paste two compatible $\star$-trisections along their boundaries.
The main technical observation that makes Theorem \ref{pasting_lemma_star} work is that when we paste two standard pieces $Z(C^0,C^1,C_{all})$ along some connected components of $C_0 \cup_{F_{all}} C_1$, the resulting 4-manifold is also a standard piece whenever all the glued surfaces in $F_{all}$ have boundary. 
The interested reader might note that our proof follows the same outline that the one of Theorem 2 of \cite{pasting_rel_trisections} applied to the setting of $\star$-trisections. 


We first review the circular handle decomposition on a $\star$-trisected 4-manifold. 
Let $X=X_1\cup X_2\cup X_3$ be a 4-manifold with a fixed $\star$-trisection. 
Let $B:=\partial \Sigma$ be the boundary of the trisection surface embedded as a link in $\partial X$; $B$ might be empty. 
Observe that $\partial X-\eta(B)$ decomposes as the union of three 3-manifolds, one per 4-dimensional piece. Remark \ref{remark_boundary_of_X} states that for each $i=1,2,3$, the 3-manifold $X_i \cap (\partial X-\eta(B))$ admits a Heegaard splitting of the form $C_{(i),0}\cup_{F_{(i),all}} C_{(i),1}$. The compression bodies in this decomposition are obtained from the data in the identification $X_i\approx Z(C_{(i)}^0,C_{(i)}^1,C_{(i),all})$ and have as many connected components as $F_{(i),all}$ (see Remark \ref{remark_constructions_standard_piece} for more details). 

\begin{thm} [Pasting Lemma for $\star$-relative trisections] \label{pasting_lemma_star}
Let $W=W_1\cup W_2 \cup W_3$ and $W'=W_1'\cup W_2' \cup W_3'$ be two $\star$-trisected 4-manifolds with non-empty boundary. Let $M\subset W$ and $M'\subset W'$ be closed 3-manifolds of the boundary of each 4-manifold. Let $f:M\ra M'$ be a homeomorphism satisfying $f(W_i\cap M)=W'_i\cap M'$ 
and preserving the Heegaard decompositions on $W_i\cap M$ and $W'_i\cap M'$ induced by the trisections for all $i$.
Suppose that each component of the surfaces 
$W_i\cap W_j \cap M$ and $W_i'\cap W_j' \cap M'$ has boundary for all $i\neq j$. Then $X=W \cup_f W'$ admits a $\star$-trisection with pieces given by $W_i\cup_f W_i'$ ($i=1,2,3$), and $\star$-trisection surface the result of gluing the trisection surfaces of $W$ and $W'$ along their boundary components intersecting with $M$ and $M'$, respectively.
\end{thm}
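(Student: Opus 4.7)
The plan is to mimic the outline of the classical pasting lemma (Theorem~2 of \cite{pasting_rel_trisections}) but to verify all steps go through for the more general standard pieces of Section~\ref{standard_star_pieces}. I would first define the candidate $\star$-trisection structure on $X=W\cup_f W'$ by setting $X_i:=W_i\cup_f W'_i$, declaring the new trisection surface to be $\Sigma_X:=\Sigma_W\cup_{f|_{B\cap M}}\Sigma_{W'}$ (glued along those components of the bindings $B$, $B'$ lying in $M$, $M'$), and letting the three compression bodies be $H_{\eps,X}:=H_{\eps,W}\cup_f H_{\eps,W'}$ for $\eps\in\{\alpha,\beta,\gamma\}$. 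Because $f$ preserves the Heegaard decompositions $W_i\cap M=C_{0,W,(i)}\cup_{F_{all,W,(i)}}C_{1,W,(i)}$ and since the components of $W_i\cap W_j\cap M$ (the $F_{all}$ surfaces inside $M$) are assumed to have nonempty boundary on the binding, $H_{\eps,X}$ is built from $\Sigma_X\times[0,1]$ by adding the 2-handles and 3-handles inherited from each side and therefore is genuinely a compression body with positive boundary $\Sigma_X$.

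The technical heart of the argument is verifying that each $X_i=W_i\cup_f W'_i$ is itself a standard piece $Z(\hat C^0_{(i)},\hat C^1_{(i)},\hat C_{all,(i)})$ in the sense of Definition~\ref{def_star_tris}. I would work with the description of Equation~\ref{equation_construction_4_stab},
\[Z(C^0,C^1,C_{all})\ \approx\ \bigl((C_0\cup_{F_{all}}C_1)\times[0,1]\bigr)\cup \bigl(C_{all}\times[0,1]\bigr),\]
apply it to both $W_i$ and $W'_i$, and observe that the gluing by $f$ identifies precisely the corresponding components of the $t=0$ slice $(C_{0}\cup_{F_{all}}C_{1})\times\{0\}$ coming from $M$ and $M'$. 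After identification, these two slabs amalgamate along a codimension-$0$ subsurface of their bases to produce a single slab $(\hat C_{0,(i)}\cup_{\hat F_{all,(i)}}\hat C_{1,(i)})\times[0,1]$; the two remaining pieces $C_{W,all,(i)}\times[0,1]$ and $C_{W',all,(i)}\times[0,1]$ attach along the common surface $\hat F_{all,(i)}$ to give $\hat C_{all,(i)}\times[0,1]$. Crucially, the assumption that each glued component of $W_i\cap W_j\cap M$ has boundary guarantees that the amalgamated $\hat C_{j,(i)}$ and $\hat C_{all,(i)}$ are still compression bodies (gluing handlebody-type pieces along closed negative boundary would produce more exotic 3-manifolds that do not fit the standard piece framework), so the right-hand side is literally an instance of the construction of Section~\ref{section_startrisections}.

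To finish, I would produce the diffeomorphisms $\varphi_{i,X}\colon X_i\to Z(\hat C^0_{(i)},\hat C^1_{(i)},\hat C_{all,(i)})$ required by Definition~\ref{def_star_tris} by concatenating the original $\varphi_{i,W}$ and $\varphi_{i,W'}$. The $Y^{\pm}$ faces used in the trisection axioms sit entirely on the pairwise interfaces $X_i\cap X_{i\pm1}$ (i.e.\ in the interior of $X$) and are disjoint from the gluing locus $M\cong M'$, so they are untouched by the pasting; hence the compatibility conditions $\varphi_{i,X}(X_i\cap X_{i\pm1})=Y^{\pm}_{\hat C^0,\hat C^1;g}$ are inherited directly from the two starting $\star$-trisections, giving a $\star$-trisection surface of the required form $\Sigma_W\cup\Sigma_{W'}$.

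I expect the main obstacle to be Step~2 (identification of $X_i$ as a standard piece), specifically the bookkeeping of what ``amalgamated'' compression bodies actually look like when only some components of $F_{all}$ participate in the gluing: one must verify that both the products $(C_0\cup C_1)\times[0,1]$ and $C_{all}\times[0,1]$ glue in a compatible product fashion. The boundary hypothesis on $W_i\cap W_j\cap M$ is the precise condition that makes this bookkeeping work; without it, gluing along a closed $F_{all}$ component would force the amalgamation to introduce $S^1$-factors (doubling of an $I$-bundle), which is not the structure of a standard piece.
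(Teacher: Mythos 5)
The high-level scaffold of your proposal matches the paper (define $X_i=W_i\cup_f W'_i$, build the new compression bodies $H_{\eps,X}=H_{\eps,W}\cup_f H_{\eps,W'}$, identify $X_i$ as a standard piece), and your step~1 is essentially the paper's second paragraph. However, there is a genuine gap in your step~2, and it is not a technicality; it is the part of the argument the paper spends the most effort on.

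Your claim that, after gluing, the two fins merge into a single slab $(\hat C_0\cup_{\hat F_{all}}\hat C_1)\times[0,1]$ and the two pieces $C_{all}\times[0,1]$, $C'_{all}\times[0,1]$ ``attach along the common surface $\hat F_{all}$ to give $\hat C_{all}\times[0,1]$'' is not geometrically correct. In Equation~\ref{equation_construction_4_stab} the fin $(C_0\cup_{F_{all}}C_1)\times[0,1]$ attaches to the base $C_{all}\times[0,1]$ at the \emph{inner} end of the fin (the end near $F_{all}\times\{0\}$), whereas the boundary piece $W_i\cap M$ lives on the \emph{outer} end of the fin. Gluing $W_i$ to $W'_i$ therefore joins the two fins at their outer ends, producing one long slab $(\text{fin})\times[0,2]$ with $C_{all}\times[0,1]$ attached at one extremity and $C'_{all}\times[0,1]$ attached at the other. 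The two base pieces sit at opposite ends of the combined fin and never meet; there is no ``common $\hat F_{all}$'' along which they attach. In particular the glued $X_i$ does not ``literally'' carry the one-fin-one-base structure of a standard piece in the way you describe, and so Definition~\ref{def_star_tris} is not verified by inspection. The last paragraph, in which $\varphi_{i,X}$ is obtained by concatenating $\varphi_{i,W}$ and $\varphi_{i,W'}$, is subject to the same objection: the new common compression body $\hat C_{all}$ is not inherited from the original data, so the required diffeomorphism cannot simply be a concatenation.

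What the paper actually does to close this gap is quite different, and you should incorporate it. First it reduces to the case where $F_{all}$ and $F'_{all}$ are connected by cutting along separating curves in $\Delta_{all}$, which decomposes $W_i$ as a boundary connected sum. Then it uses the structure of the standard pair $(\Sigma;\Delta^0,\Delta^1,\Delta_{all})$ from Remark~\ref{remark_diagrams_standard_piece} to show $(C_0\cup_{F_{all}}C_1)\times[0,1]$ is a $4$-dimensional $1$-handlebody (this is where the ``curves of one type'' dichotomy is used), and hence that $X_i$ is a $1$-handlebody $\natural_K S^1\times B^3$. Finally, it observes that $\partial X_i$ carries the Heegaard splitting $H^0\cup_{\wt\Sigma}H^1$ with $H^j=C^j\cup_f C'^j$, invokes Waldhausen's theorem to conclude this splitting is a stabilization of the standard genus-$K$ splitting, and takes that standard splitting to be $H_{all}$, producing the identification $X_i\cong Z(H^1,H^0,H_{all})$. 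Without the $1$-handlebody step and the appeal to Waldhausen, the identification of $X_i$ as a standard piece is unjustified.
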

\begin{proof} 
Let $X_i = W_i \cup_f W_i'$. We will show that $X=X_1\cup X_2\cup X_3$ is a $\star$-trisection. 

We will first focus on the pairwise intersection $X_i\cap X_j$. Fix $i\neq j$ and let $C=W_i\cap W_j$ and $C'=W_i'\cap W_j'$. By definition $C$ and $C'$ are compression bodies with positive boundary the connected $\star$-trisection surfaces $\Sigma$ and $\Sigma'$, respectively. 
Let $P_M=\partial_-C\cap M$ and $P_{M'}=f(P_M)=\partial_-C'\cap M'$ be the negative boundary components of $C$ and $C'$ lying inside the gluing regions $M$ and $M'$. By assumption each component of $P_M$ and $P_{M'}$ is a surface with boundary. 
This implies that $H=C\cup_{f} C'$ is also a compression body with positive boundary $\wt \Sigma=\Sigma\cup_f\Sigma'$. 
In particular, the triple intersection $X_1\cap X_2\cap X_3$ is a copy of $\wt \Sigma$. 
Let $\eps \subset \Sigma$ and $\eps'\subset \Sigma'$ be loops determining $C$ and $C'$. Let $a\subset P_M$ be pairwise disjoint properly embedded arcs which cut $P_M$ into disks and let $a'=f(a)$ be the corresponding arcs in $P_{M'}$. 
We can think of the arcs in $a$ (resp. $a'$) as subsets of $\Sigma$ (resp. $\Sigma'$), disjoint from $\eps$ (resp. $\eps'$).
The curves in $\wt \Sigma=\Sigma\cup_f \Sigma'$ determining $H$ are given by $\eps\cup \eps'$ and the glued arcs $a\cup_f a'$.

We now show that each piece $X_i$ is diffeomorphic to a standard piece. 
Fix $i\in \{1,2,3\}$, by definition $W_i\approx Z(C^0,C^1, C_{all})$ and $W'_i\approx Z(C'^0,C'^1,C'_{all})$ for some compression bodies. We use the notation of Remark \ref{remark_diagrams_standard_piece}; i.e., $C^0$ and $C^1$ are determined by essential loops $\Delta^0,\Delta^1\subset\Sigma$, and the common curves $\Delta_{all}=\Delta^0\cap \Delta^1$ determine the $C_{all}$. 
Let $F_{all}$ be the negative boundary of $C_{all}$. For $j=0,1$ let $C_j=C^j-int(C_{all})$ be the compression body with positive boundary the surface $F_{all}$, and let $P_j$ be the negative boundary of $C_j$. 
Recall that, due to Remark \ref{remark_boundary_of_X}, $W_i\cap \partial W$ contains a copy of $C_0\cup_{F_{all}} C_1$. 

If $F_{all}$ is disconnected, we can pick $\Delta_{all}\subset \Sigma$ having separating simple closed curves. Each such curve decomposes the diagram $(\Sigma;\Delta^1,\Delta^0, \Delta_{all})$ as a connected sum of standard diagrams in lower genus surfaces. This decomposes $W_i$ into a boundary connected sum of the corresponding standard 4-manifolds. The same argument holds for $W'_i$. Hence, it is enough to check that $W_i\cup_f W'_i$ is a standard piece when $F_{all}$ and $F'_{all}$ are connected surfaces. 

Suppose $F_{all}$ is a connected surface. In particular, the curves in $\Delta_{all}\subset \Sigma$ do not separate the surface and $C_{all}$ is built from $F_{all}\times[0,1]$ by attaching $|\Delta_{all}|$ 1-handles\footnote{No 3-handles are needed in this case since we are assuming $F_{all}$ is non empty.} along $F_{all}\times\{1\}$. 
Then, using Equation \ref{equation_construction_4_stab} we conclude that $W_i$ is diffeomorphic to the result of adding $|\Delta_{all}|$ 1-handles to $(C_0\cup_{F_{all}} C_1)\times [0,1]$. Since $F_{all}$ is connected, Remark \ref{remark_eq_1} explains that the location of the feet of the 1-handles is not necessary. 
Furthermore, since $(\Sigma; \Delta^0, \Delta^1, \Delta_{all})$ is a standard pair and $F_{all}$ is connected, it follows that $\Delta_0\cup\Delta_1$ only has curves of one type (see Remark \ref{remark_diagrams_standard_piece}). In any case, we get that $(C_0\cup_{F_{all}}C_1)\times [0,1])$ is a 1-handlebody of genus $N$. Here $N=\left( 1-\chi(F_{all})-|\Delta_0|-|\Delta_1|-2|\delta^0_{stab}|\right)$ if $\Delta_0\cup\Delta_1$ is of type 1 or $N=1$ if type 2.  
One can see this by building $W_i$ like in Equation \ref{eq_construction_2_stab} and noticing that each circle in $\Delta_0\cup \Delta_1$ cancels a 1-handles from the 1-handlebody $F_{all}\times[0,1]^2$ (recall $F_{all}$ has boundary).
On the other hand, to build $X_i$ we must glue $(C_0\cup_{F_{all}}C_1)\times[0,1]$ and $(C'_0\cup_{F'_{all}}C'_1)\times[0,1]$ 
using $f\times\{1\}$, and then add copies of $C_{all}\times[0,1]$ and $C'_{all}\times[0,1]$ as in Equation \ref{equation_construction_4_stab} (see right side of Figure \ref{isotopy_model_Z}).
Therefore, $X_i=W_i\cup_f W'_i$ is a 1-handlebody of genus $K=|\Delta_{all}|+|\Delta'_{all}|+N$. 

To end, observe that the boundary of $X_i=W_i\cup_f W'_i$ has a decomposition of the form 
$H^0\cup_{\wt \Sigma} H^1$ where $H^j=(C^j\cup_f C'^j)$. In particular, both $H^0$ and $H^1$ are handlebodies and this is a Heegaard splitting for $\partial X_i = \#_K S^1\times S^2$. Waldhaussen's Theorem \cite{waldhausen} implies that this splitting is a $g(\wt\Sigma)-K$ times stabilization of the standard genus $K$ Heegaard splitting. We can take this standard splitting to be $H_{all}$ and conclude that $X_i$ is diffeomorphic to the standard piece $Z(H^1,H^0, H_{all})$. This finishes the proof.
\end{proof}

\begin{remark} [The Diagrammatics] \label{remark_diagramatics_pasting_lemma}
Take two $\star$-trisected 4-manifolds $W$, $W'$ with a diffeomorphism $f:M\ra M'$ between closed submanifolds of their boundaries respecting the handle respective handle decompositions. Suppose that all components of $W_i\cap W_j \cap M$ have non-empty boundary. 
Let $(\Sigma; \alpha, \beta, \gamma; \Delta_{\gamma\cap \alpha}, \Delta_{\alpha\cap \beta}, \Delta_{\beta\cap\gamma})$ be a $\star$-trisection diagram for $W$ (resp. $W'$). Using Lemma \ref{pasting_lemma_star}, we get a $\star$-trisection diagram $(\wt \Sigma; \wt\alpha,\wt \beta, \wt\gamma; \Delta_{\wt\gamma\cap \wt\alpha}, \Delta_{\wt\alpha\cap \wt\beta}, \Delta_{\wt\beta\cap\wt\gamma})$ for $X=W\cup_f W'$ with trisection surface $\wt \Sigma= \Sigma\cup_f \Sigma'$. 
For $\wt\eps = \wt\alpha, \wt\beta, \wt\gamma$, the curves in the compression body $H_{\wt\eps}$ are given by:
\begin{enumerate} 
\item Loops in $\eps\subset \Sigma$ and $\eps'\subset \Sigma'$.
\item Disks obtained from pairwise disjoint arcs filling the compressed page $P_\eps$, glued along their boundaries to their images on $P_{\eps'}$ under $f$. 
\end{enumerate}
In the proof of Lemma \ref{pasting_lemma_star} we showed that the curves in $(\wt \eps,\wt \mu)$ obtained from the second item above are slide equivalent to a collection of isotopic curves and curves intersecting in one point (coming from stabilizations). 
Thus the curves in $\Delta_{\wt \eps\cap \wt \mu}$ are given by the curves in $\Delta_{\eps\cap \mu}\cup \Delta_{ \eps'\cap\mu'}$, together with this new parallel curves. 
\end{remark}

Since relative trisections with non-empty binding ($b>0$) satisfy the conditions of Lemma \ref{pasting_lemma_star}, our result extends the gluing theorems of \cite{pasting_rel_trisections} and \cite{trisections_via_lefschetz}. In this case ($b>0$), the monodromy algorithm of \cite{relative_trisections} is helpful to find the image of the filling arcs under $f$. 
In general, in order to glue two $\star$-trisections with Theorem \ref{pasting_lemma_star}, one might need to poke the given diagrams (see Subsection \ref{subsection_poking}) enough times to ensure that every connected component $\partial_- (W_i\cap W_j)$ is a surface with boundary. In particular, for relative trisections with $b=0$, it is sufficient to poke the trisection surface three times. 

\begin{cor} [Pasting Lemma for Relative Trisections with Empty Binding]\label{pasting_lemma}
Let $W=W_1\cup W_2\cup W_3$ and $W'=W'_1\cup W'_2\cup W'_3$ be two trisected 4-manifolds with non-empty connected boundary and \textbf{closed} trisection surfaces $\Sigma$ and $\Sigma'$, respectively. Let $P$ and $P'$ be the pages of the fibration over $S^1$ on $\partial W$ and $\partial W'$ induced by the trisections, respectively. Let $f:\partial W \ra \partial W'$ be a homeomorphism between the boundaries respecting the pages; i.e., $f(P)=P'$. Then the glued closed 4-manifold $X=W\cup_f W'$ admits a $(G;K)$-trisection where \[G=g(\Sigma) + g(\Sigma') +2 \text{ and } K_i=k_i+k_i'+2g(P).\]  Here $k_i$ denote the number of common curves in the trisection diagrams. 
\end{cor}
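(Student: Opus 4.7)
The plan is to reduce to Theorem \ref{pasting_lemma_star} by a preliminary poking of both $\star$-trisections so that every component of each $W_i \cap W_j \cap \partial W$ acquires non-empty boundary. Since $\Sigma$ and $\Sigma'$ are closed, the fibrations induced on $\partial W$ and $\partial W'$ have closed pages $P$ and $P'$, so every surface $W_i \cap W_j \cap \partial W$ is closed and Theorem \ref{pasting_lemma_star} does not apply directly.

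I would first perform three pokings on the diagram of $W$, one point in each sector, yielding a new $\star$-trisection for the same 4-manifold $W$ with trisection surface $\hat\Sigma$ equal to $\Sigma$ minus three open disks; the induced structure on $\partial W$ is now a genuine open book with three binding circles and page $\hat P = P$ minus three disks. Transferring these points to $\partial W'$ via $f$ produces a compatible poking of $W'$ with trisection surface $\hat\Sigma'$ and page $\hat P' = f(\hat P)$, and now each component of $W_i \cap W_j \cap \partial W$ contains at least one of the three binding circles on its boundary, satisfying the hypothesis of Theorem \ref{pasting_lemma_star}. This produces a $\star$-trisection of $X = W \cup_f W'$ with trisection surface $\wt\Sigma = \hat\Sigma \cup_f \hat\Sigma'$. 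A direct Euler-characteristic count gives $\chi(\wt\Sigma) = \chi(\hat\Sigma) + \chi(\hat\Sigma') = -2 - 2g(\Sigma) - 2g(\Sigma')$, hence $g(\wt\Sigma) = g(\Sigma) + g(\Sigma') + 2 = G$, as claimed.

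For the parameters $K_i$, Remark \ref{remark_diagramatics_pasting_lemma} splits the curves of the new diagram in $H_{\wt\eps}$ into the original loops from $\eps \cup \eps'$, which preserve the $k_i + k_i'$ previously common curves, together with circles assembled from a system of pairwise disjoint arcs filling the compressed page $\hat P$ and paired via $f$ with their images in $\hat P'$. The main obstacle will be to choose this arc system so that exactly $2g(P)$ of the resulting circles become additional common curves while the remaining ones realize precisely the two stabilizations responsible for the $+2$ in the genus formula. Following the approach of \cite{pasting_rel_trisections}, I would fix a standard cut system that reduces $\hat P$ to a disk; after isotoping by the monodromy implicit in $f$, $2g(P)$ of the resulting curve pairs can be made to coincide (producing common curves) and the remaining two pairs produce the expected stabilization pairs. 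This yields $K_i = k_i + k_i' + 2g(P)$ as asserted, completing the proof.
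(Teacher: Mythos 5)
Your approach coincides with what the paper does: the corollary is not given a separate proof in the text but follows from Theorem~\ref{pasting_lemma_star}, the poking move of Subsection~\ref{subsection_poking}, and the diagrammatics of Remark~\ref{remark_diagramatics_pasting_lemma}; the remark immediately following the corollary spells out precisely the three-poke-then-paste recipe you use, and your Euler-characteristic computation of the new genus $G=g(\Sigma)+g(\Sigma')+2$ is the correct bookkeeping for removing three disks from each surface and gluing along three circles.

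One small point of emphasis on the $K_i$ count. You frame it as an ``obstacle'' requiring a careful choice of arc system so that exactly $2g(\hat P)$ of the new circles become common. In the paper's proof of Theorem~\ref{pasting_lemma_star}, the identification of the new common curves does not hinge on a clever initial choice: one first shows $X_i = W_i\cup_f W_i'$ is a $4$-dimensional $1$-handlebody of some genus $K$ (a handle count once the pieces are assembled), and then one observes that $\partial X_i$ has the Heegaard splitting $H^0\cup_{\wt\Sigma}H^1$, which by Waldhausen's theorem must be a stabilization of the standard genus-$K$ splitting of $\#_K S^1\times S^2$; the common compression body $H_{all}$ is then taken to be that standard core. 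So the ``$2g(P)$ new common curves plus two stabilizing pairs'' picture is the outcome, but it is obtained a posteriori from Waldhausen rather than by engineering the arcs, and your computation of $K_i=k_i+k_i'+2g(P)$ should be anchored to the formula $K=|\Delta_{all}|+|\Delta_{all}'|+N$ from that proof, checking that after poking one still has $|\Delta_{all}|=k_i$, $|\Delta_{all}'|=k_i'$, and that $N=2g(P)$ because $F_{all}$ is the thrice-punctured page $\hat P$, which has $\chi(\hat P)=-1-2g(P)$. With that adjustment the argument is complete and matches the intended one.
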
 
 
\begin{remark}[Diagramatics for relative trisections with empty binding]
Suppose now that we are given two relative trisection diagrams with \textbf{closed trisection surface} for two 4-manifolds with connected boundary. Suppose that we are also given a homeomorphism $f:\partial W\ra \partial W'$ between connected components of the boundaries preserving the pages of the open book decomposition. 
To build the trisection diagram for the union $W\cup_f W'$ we need to find three poking points $p_\alpha, p_\beta, p_\gamma \subset \Sigma-(\alpha\cup \beta\cup \gamma)$. These points can be thought as lying in a page $P$ for the open book decomposition in $\partial W$. Thus we can take their image under $f$ to be points in the page $P'\subset \partial W'$. Denote by $p'_\alpha, p'_\beta, p'_\gamma \subset \Sigma'-(\alpha'\cup \beta' \cup \gamma')$ the corresponding points in $\Sigma'$. It follows that these are also poking points and the two new trisection diagrams for $W$ and $W'$ (obtained by poking) satisfy the conditions of Lemma \ref{pasting_lemma_star}. 
Hence, $W\cup_f W'$ admits a trisection diagram $(\wt \Sigma; \wt\alpha,\wt \beta, \wt\gamma)$ with trisection surface
\[\wt \Sigma = \left( \Sigma -\cup_\eps \overset{\circ}{\eta}(\eps)\right) \bigcup_{\partial \eta(\eps) = \partial \eta(\eps')} \left( \Sigma' -\cup_{\eps'} \overset{\circ}{\eta}(\eps')\right).\] 
For $\wt \eps = \wt\alpha, \wt \beta, \wt \gamma$, the curves in $\wt \eps$ are given by:
\begin{enumerate} 
\item Loops in $\eps\subset \Sigma$ and $\eps'\subset \Sigma'$.
\item Disks obtained from pairwise disjoint arcs filling the compressed page $P_\eps$, glued along their boundaries to their images on $P_{\eps'}$ under $f$. 
\item The loop $\partial \eta (p_\eps)$.
\end{enumerate}
\end{remark} 
\begin{figure}[h]
\centering
\includegraphics[scale=.04]{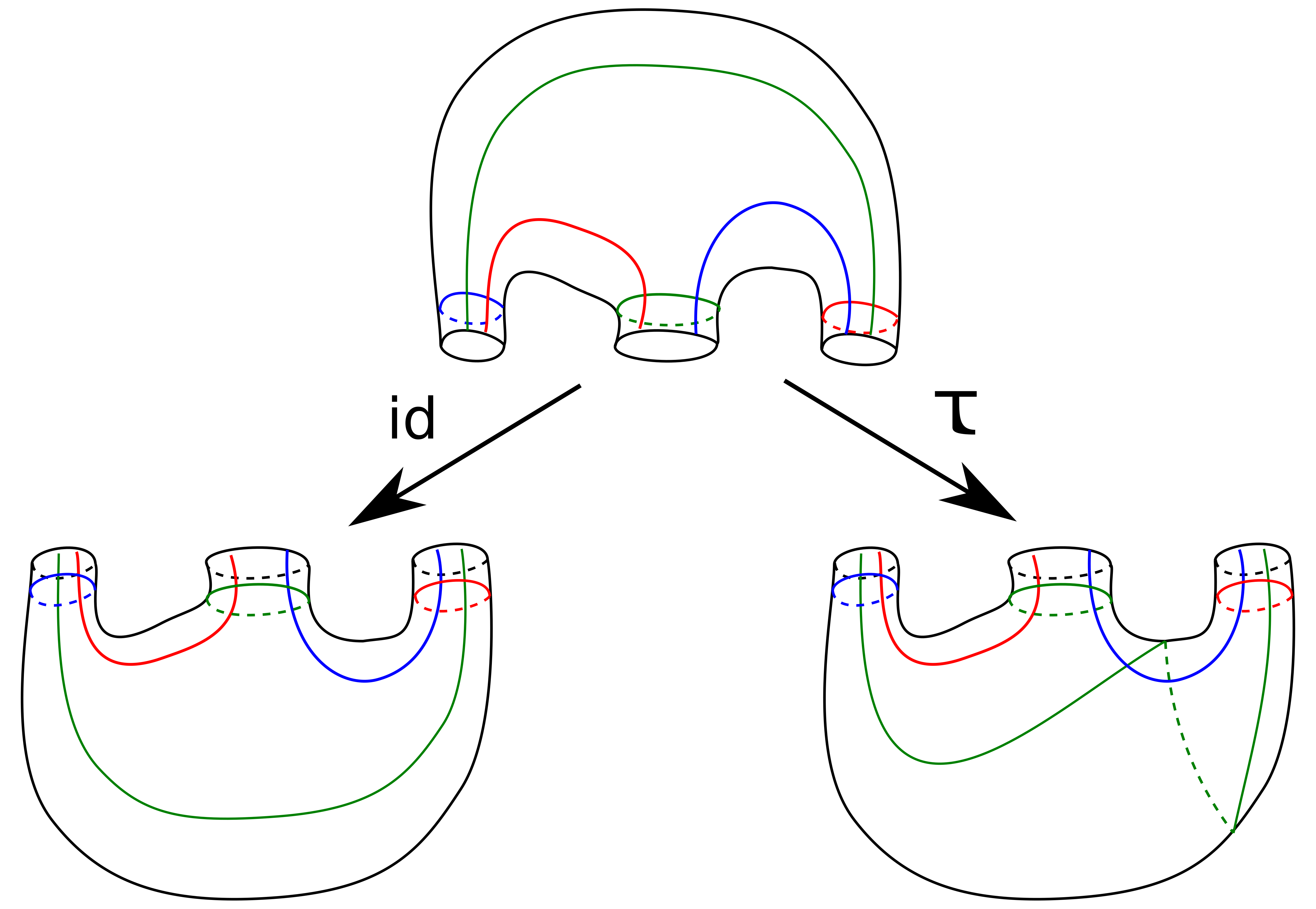}
\caption{Two distinct ways of gluing a pair of thickened spheres. The map $\tau$ twists the $S^2$ fiber once while traversing the $S^1$ direction.}
\end{figure}
\begin{figure}[h]
\centering
\includegraphics[scale=.13]{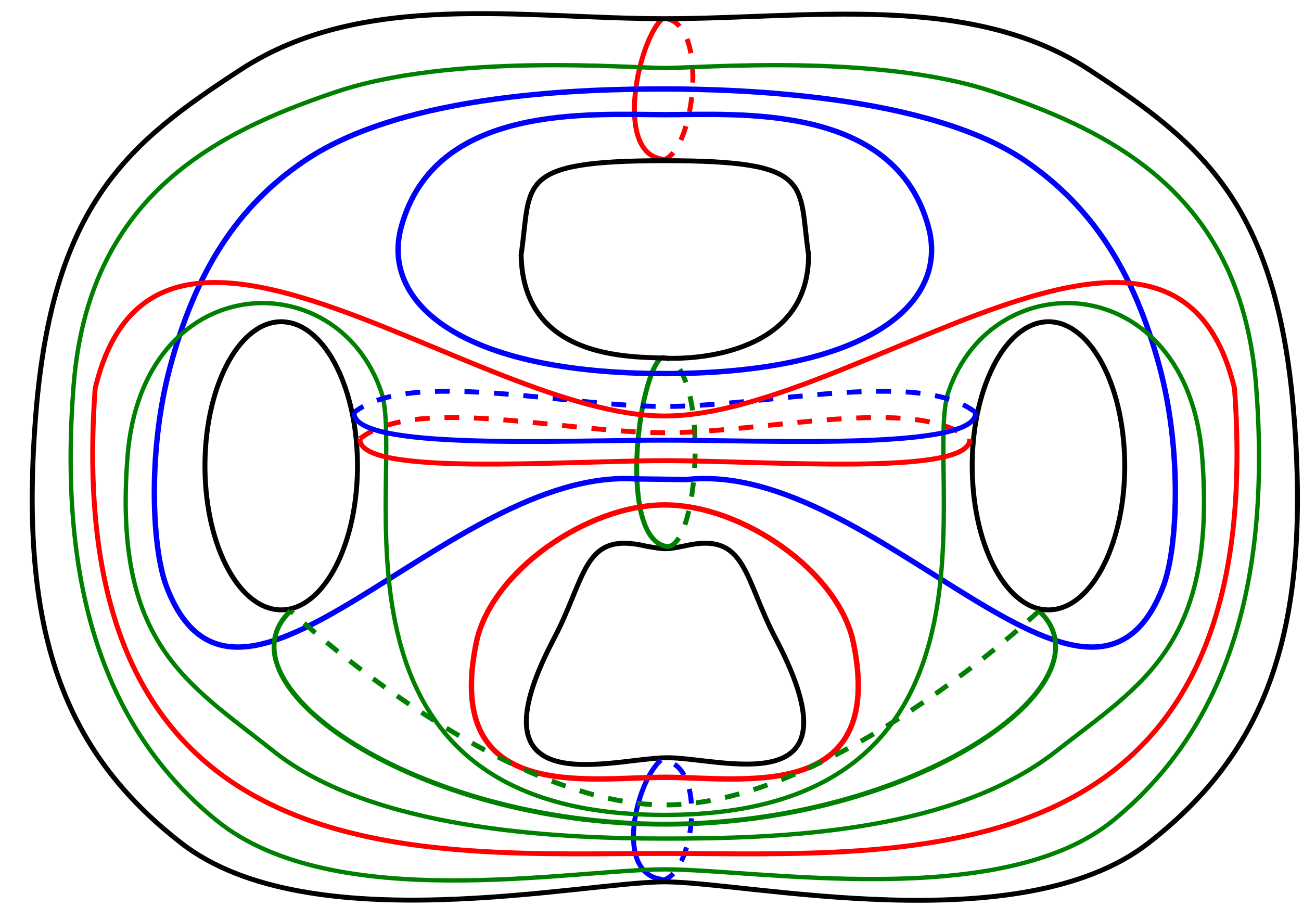}
\caption{Trisection diagram for $T^2\times S^2$. One can see two thrice punctured tori (left and right) corresponding to each copy of $T^2\times D^2$.}
\label{fig_trisection_T2xS2}
\end{figure}

\begin{examp}[Sphere bundles over $RP^2$]
During the last day of the 2019 Spring Trisectors Meeting at UGA, the pair of trisection diagrams of Figure \ref{RP2_1} was discussed. Work of Gay and Meier in \cite{doubly} shows that $B$ is a Gluck twist of $A$ along some embedded 2-sphere. We can use Theorem \ref{pasting_lemma_star} to decompose the 4-manifolds $A$ and $B$ as the union of two 4-manifolds along glued along their boundary: $A=(S^2\times D^2)\cup_f X$ and $B=(S^2\times D^2)\cup_g X$. Remark \ref{remark_complement_loop} and Figure \ref{complement_loop_3} show that $X$ is the complement of a circle in $S^1\times S^3$ representing twice the generator of first homology. It is a nice exercise to see that $X$ is diffeomorphic to the product $S^2\times M^2$ where $M^2$ is a Mobius band. To end, the pasting map $f$ does not twist the $S^2$ component (see Figure \ref{RP2_2}). Hence $A$ is a trisection for the product $S^2\times RP^2$ and $B$ is a Gluck twist of $A$ along a fiber $S^2\times \{pt\}$. Concluding that $B=S^2\wt\times RP^2$. 
\end{examp}
\begin{figure}[h]
\centering
\includegraphics[scale=.06]{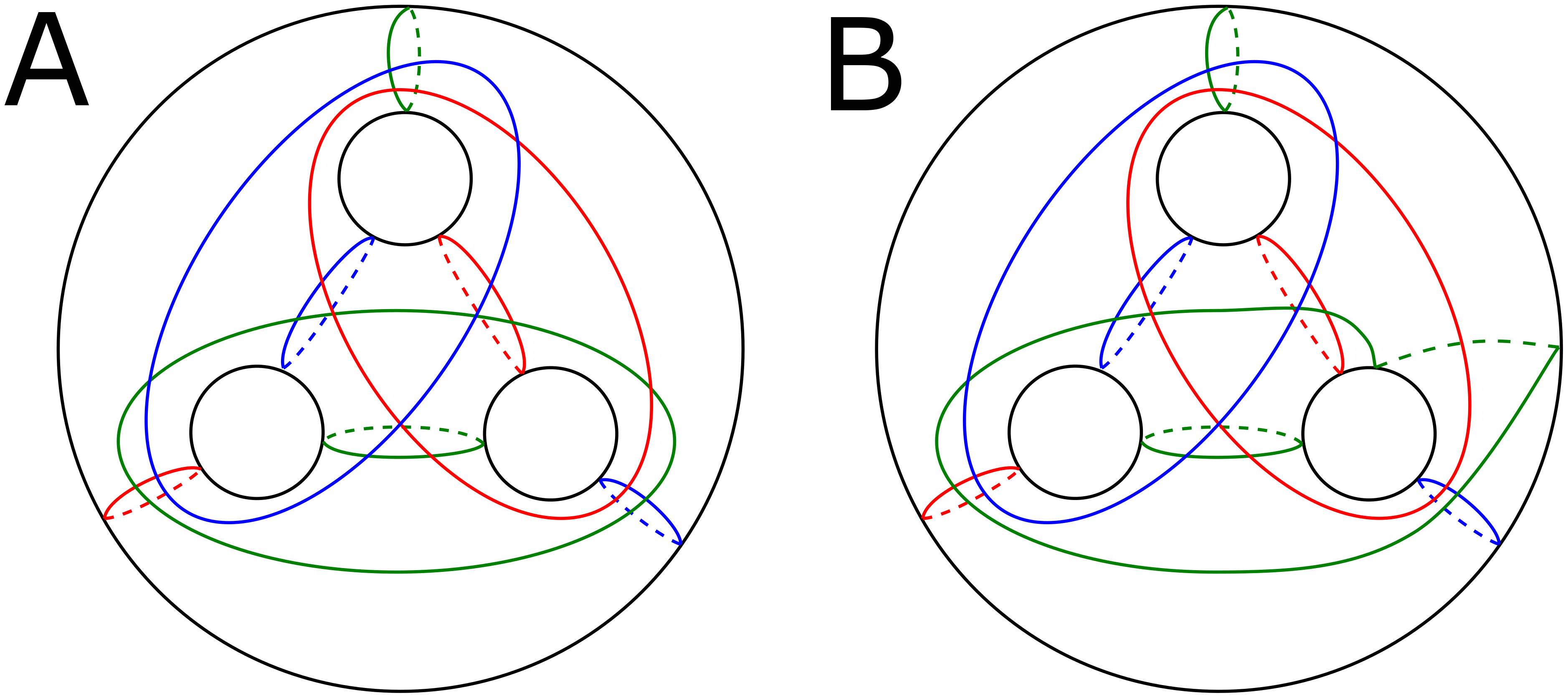}
\caption{A pair of genus 3 trisection diagrams that differ by Gluck twist.}
\label{RP2_1}
\end{figure}
\begin{figure}[h]
\centering
\includegraphics[scale=.06]{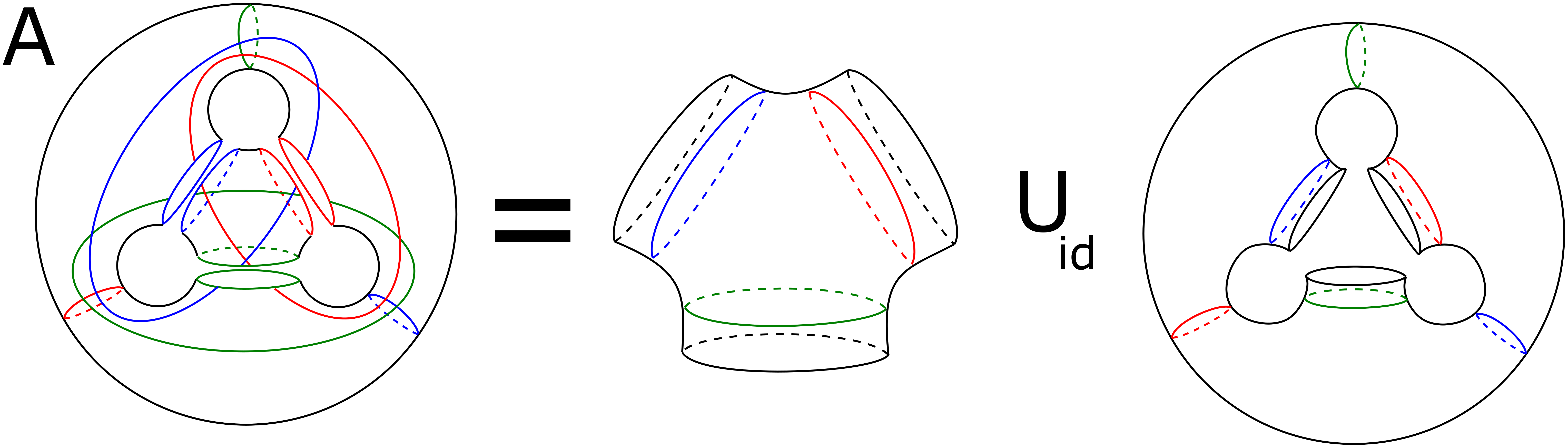}
\caption{Decomposing $A$ as $A=(S^2\times D^2)\cup_f X$. Note that $X$ is a trisection diagram for the complement of a circle in $S^1\times S^3$ representing twice the generator of first homology. (see Section \ref{loops_genus_one}).}
\label{RP2_2}
\end{figure}

\section{The Complement of a Simple Closed Curve} 
\label{section_curve_complement}
Let $X$ be a compact 4-manifold with $\star$-trisection $X=X_1\cup X_2\cup X_3$. Let $\Sigma$ be the trisection surface and consider $c\subset X$ a simple closed curve in $X$. Since $\pi_1(\Sigma)\twoheadrightarrow \pi_1(X)$, the curve $c$ is homotopic to an immersed curve $S^1\looparrowright \Sigma$. Given an immersed curve as such, we are interested in finding a $\star$-trisection for $X-\eta(c)$. To accomplish this, we decompose the immersed curve into a union of embedded arcs, push the arcs into the handlebodies, and then remove the tubular neighborhood of each arc.
\begin{defn} \label{def_decomposed_curve}
Given a $\star$-trisection $(\Sigma;\alpha_1,\alpha_2,\alpha_3)$, we say that an immersed curve $c\subset \Sigma$ is decomposed if $c$ is the union of three collections of embedded arcs $c=a_1\cup a_2\cup a_3$ with the property that $a_1\cap\alpha_1=a_2\cap\alpha_2=a_3\cap\alpha_3=\emptyset$ and that each arc in $a_i$ is connected to one arc from each of $a_{i-1}$ and $a_{i+1}$. Denote the discrete set of points $a_{i-1}\cap a_{i+1}$ by $b_i$.
\end{defn}
Starting with a decomposed curve $c$, push each arc of $a_i$ into $H_{\alpha_i}$, leaving the endpoints fixed. We claim that the decomposition $X-\eta(c) = \cup_{i=1}^3 X_i-\eta(c)$ is a $\star$-trisection. Since $c\cap X_i=a_{i-1}\cup a_i \subset \partial X_i$ is a collection of disjoint arcs in the boundary of $X_i$, the complement $\wt X_i :=X_i-\eta(c)$ is diffeomorphic to $X_i$. By construction, the arcs $a_i$ are simultaneously parallel to the boundary of $H_i$, thus $\wt X_i \cap \wt X_{i+1}$ is also a compression body. Hence we have a $\star$-trisection of $X-\eta(c)$. 

We now describe the $\star$-trisection diagram for $X-\eta(c)$ resulting from this procedure. The trisection surface $\wt \Sigma =\cap_i \wt X_i$ is a copy of $\Sigma$ with open disks removed around the endpoints of all the arcs. Start by drawing a $\star$-trisection diagram $(\Sigma; \alpha_1, \alpha_2, \alpha_3;\Delta_{\alpha_1\cap \alpha_2},\Delta_{\alpha_2\cap \alpha_3},\Delta_{\alpha_3\cap \alpha_1})$ for $X$ together with the immersed decomposed curve $c=a_1\cup a_2\cup a_3$. Let $\wt\Sigma$ be the punctured surface $\Sigma-\cup_{i=1}^3\eta(b_i)$. Then the compression body $\wt H_i=\wt X_i \cap \wt X_{i+1}$ can be built from $\wt \Sigma$ by attaching 2-handles along the following curves (see Figure \ref{complement_loop}). 
\begin{enumerate} 
\item The original curves $\alpha_i$, 
\item the boundary parallel curves $\partial\eta(b_i)$ and
\item the non-boundary parallel components of $\partial \eta(a_i)$. 
\end{enumerate} 
For each pair $(i,i+1)$, the new standard picture is given by adding extra loops in $\Delta_{\alpha_i\cap \alpha_{i+1}}$ as in Part (b) of Figure \ref{remark_complement_loop}. 
\begin{figure}[h]
\centering
\includegraphics[scale=.076]{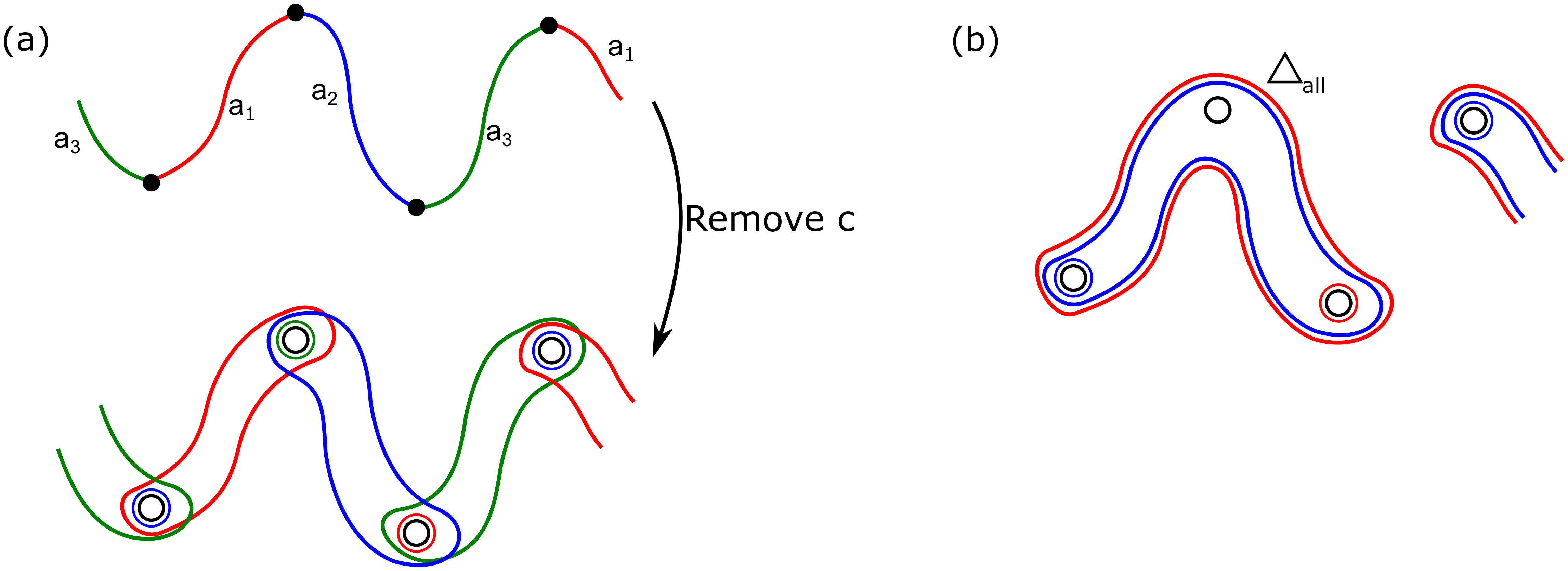}
\caption{How a $\star$-trisection diagram changes when taking the complement of a decomposed curve in $\Sigma$.}
\label{complement_loop}
\end{figure}
\begin{remark} \label{remark_complement_loop} 
If $X$ is closed and we can decompose the curve $c$ so that $|a_i|=1$, then the $\star$-trisection diagram given by the procedure above introduces an unnecessary curve which we can remove. The new $\star$-trisection diagram for the complement is depicted in Figure \ref{complement_loop_2}. To see why this is true, compress $\wt\Sigma$ along all of the original $\alpha$ curves. What remains is a thrice punctured surface. The two new $\alpha$ curves introduced by the procedure above become parallel, thus we can remove one of them.
Figure \ref{complement_loop_3} shows a concrete example of this operation.
\begin{figure}[h]
\centering
\includegraphics[scale=.4]{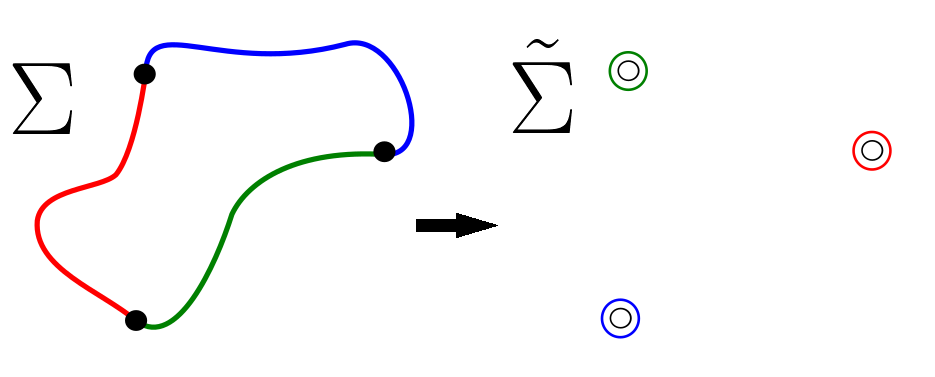}
\caption{How the $\star$-trisection diagram changes if each $|a_i|=1$ and $\partial X=\emptyset$.}
\label{complement_loop_2}
\end{figure}
\begin{figure}[h]
\centering
\includegraphics[scale=.07]{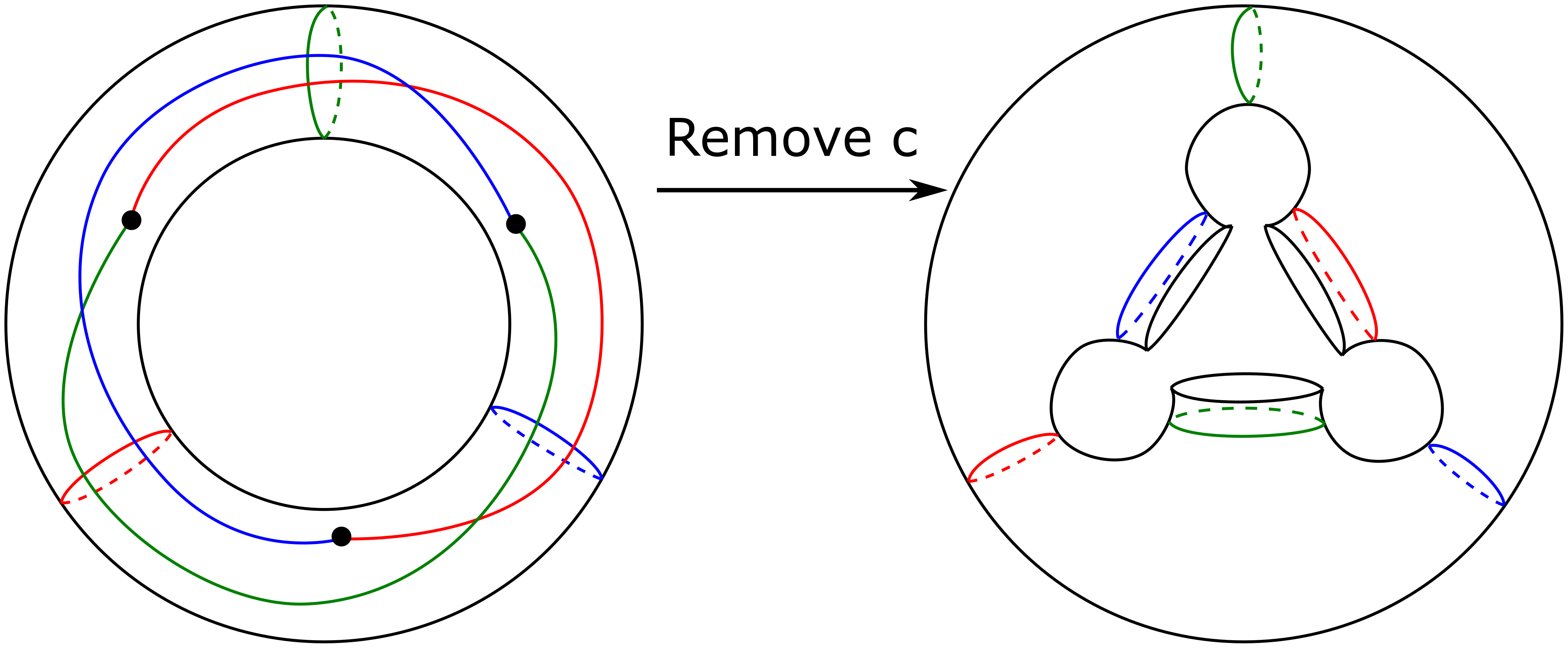}
\caption{A trisection diagram for $S^1\times S^3$ together with a decomposed curve $c$ homotopic to twice the generator $S^1\times \{pt\}$. The right picture is a $\star$-trisection diagram for the complement.}
\label{complement_loop_3}
\end{figure}
\end{remark} 

\subsection{Loops in Genus One Trisections} \label{loops_genus_one} 
Recall that all simple closed curves in a 4-manifold $X$ representing a fixed homotopy class $[c]\in\pi_1(X)$ are isotopic. It is therefore natural to wonder if, for simple $\star$-trisections, any two decomposed curves representing a given class $[c]\in\pi_1(X)$ are slide equivalent in the $\star$-trisection surface. We prove that this is the case for embedded curves in genus one classical trisections. 
The following technical proposition is key to show in Section \ref{section_genus_three} that an infinite family of genus three trisection diagrams is standard.

\begin{prop} \label{prop_genus_one}
Let $(\Sigma; \alpha, \beta, \gamma)$ be a genus one trisection diagram for a closed 4-manifold. Let $c$ be an embedded decomposed curve with $|a_i|=1$ in $\Sigma$. Let $\lbrace \mu,\lambda\rbrace$ be a basis for $\pi_1(\Sigma)$ with $[\alpha]=[\mu]$. If $[c]=[m\mu+n\lambda]$, then by sliding the arcs $a_i$ over the boundaries $b_i=a_{i+1}\cap a_{i-1}$, and sometimes sliding $a_1$ over $\alpha$, $c$ is slide equivalent to an immersed curve representing $[n\lambda]$ with $a_1$ twisting around $b_3$ a total of $m$ times.
\end{prop}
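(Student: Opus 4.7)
The plan is to prove the statement by induction on $|m|$, implicitly enlarging the setup to allow immersed decomposed curves since the slides may destroy embeddedness in the intermediate steps.

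\textbf{Base case} ($m=0$). Here $[c]=n\lambda$. The key observation is that $a_1\cap\alpha=\emptyset$, so $a_1$ lies in the annulus $A:=\Sigma\setminus\alpha$, whose fundamental group is infinite cyclic with generator (conjugate to) $\mu$. Since the total $\mu$-contribution of the three arcs to $[c]$ vanishes, one can redistribute $\mu$-winding among the arcs using boundary slides at $b_2$ and $b_3$ until the relative class of $a_1$ in $A$ (rel endpoints) is trivial. At that point $a_1$ is a simple arc near $b_3$ with no twists, which is the desired canonical form.

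\textbf{Inductive step} ($|m|\ge 1$). Assume the conclusion for $|m|-1$. First, use boundary slides at $b_3$ to bring a subarc of $a_1$ into a collar neighborhood of a push-off of $\alpha$ near $b_3$. Then perform a single slide of $a_1$ over $\alpha$, with the band for the band-sum lying in this collar and with sign chosen so that the class of the new curve $c'$ becomes $(m-\operatorname{sgn}(m))\mu+n\lambda$. The localized $\alpha$-slide, followed by a cleanup boundary slide at $b_3$, has the combined effect of reducing $|m|$ by one and introducing precisely one additional twist of $a_1$ about $b_3$ (in $\Sigma\setminus\{b_3\}$). Applying the inductive hypothesis to $c'$ produces a slide equivalence to canonical form with $|m|-1$ twists about $b_3$; composing with the slide performed in this step yields the desired canonical form with $|m|$ twists.

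\textbf{The main obstacle} is the explicit local verification that a localized $\alpha$-slide near $b_3$, combined with a cleanup boundary slide, introduces exactly one twist of $a_1$ around $b_3$ and no other change to the decomposition. I would verify this in a disk neighborhood $U$ of $b_3$: before the slide, $a_1\cap U$ is a radial arc terminating at $b_3$; the band-sum with a push-off of $\alpha$ meeting $U$ in a parallel strip replaces this arc with one whose approach to $b_3$ winds once around the puncture, which is by definition a single twist. Additional care is required to check that the preparatory and cleanup boundary slides do not introduce spurious twists at $b_1$ or $b_2$ and do not produce multiple twists at $b_3$; this follows from the fact that each such slide can be arranged to be supported away from the other boundary points.
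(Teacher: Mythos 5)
The proposal follows the same inductive skeleton as the paper's proof (reduce $|m|$ by one via a slide of $a_1$ over $\alpha$, accumulating one twist per step), but the crucial local verification is missing and the geometric picture you offer in its place does not hold up. The paper does \emph{not} slide $a_1$ over $\alpha$ from the initial position where $w_1=w_2=\emptyset$. It first performs a preparatory sequence of slides: it moves $b_1$ along $a_3$ until $w_2$ reads a prefix $\lambda^j\mu$ of $w_3$, commutes $\mu$ past $\lambda^j$ by sliding $a_2$ over $b_2$, and then isotopes so that $w_1=\mu$ and $w_2=\lambda^j$. Only after this preparation is $a_1$ slid over $\alpha$. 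What produces the twist around $b_3$ is precisely the cancellation of the copy of $\mu$ already present in $w_1$ against the $\mu^{-1}$ contributed by $\alpha$; the ``slack'' from that cancellation is what wraps around the puncture, and simultaneously the invariant $w_1=w_2=\emptyset$ is restored so the reduction step can be iterated. Your version applies the $\alpha$-slide while $w_1$ is still empty, so it produces $w_1=\mu^{\mp1}$ rather than cancelling anything. Band-summing $a_1$ with a push-off of $\alpha$ in a collar near $b_3$ makes $a_1'$ traverse the full non-separating curve $\alpha$ once; it does not make $a_1'$ wind around the small boundary circle $\partial\eta(b_3)$, because both ends of the strip $\alpha\cap U$ exit the disk $U$ and hence $\alpha$ does not enclose $b_3$. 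Traversing $\alpha$ and winding around $b_3$ are distinct phenomena on the punctured surface, and the disk-neighborhood verification in your ``main obstacle'' paragraph conflates them. The follow-up ``cleanup boundary slide at $b_3$'' would indeed insert a twist, but such a slide is homotopically trivial in $\Sigma$ and there is nothing forcing exactly one of them, so the combined claim ``reduces $|m|$ by one and adds exactly one twist'' is unsupported.

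There are also secondary issues. The base case $m=0$ for an embedded curve is vacuous: an embedded $a_1$ already has zero twists, so nothing needs to be slid, and the annulus argument you give (redistributing $\mu$-winding until the relative class of $a_1$ in $A$ is trivial) addresses the homotopy class in $\Sigma$ rather than the twist count in the punctured surface, which are different data. The version you actually need for the inductive hypothesis is an immersed-curve statement; you mention enlarging the setup but never formulate it, and without it the hypothesis cannot be applied to $c'$. The paper sidesteps all of this by maintaining a concrete inductive invariant (after normalization, $w_1=w_2=\emptyset$ with $a_1,a_2$ in a disk $D$ disjoint from $\lambda\cup\mu$ and $a_3$ an embedded arc carrying $w_3$), so the object fed into each reduction step is always of the same controlled type and the twist count is unambiguous.
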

\begin{proof} 

Throughout this argument $\beta$ and $\gamma$ will be pushed around as needed. If $\gamma$ or $\beta$ are in the way of sliding $a_i$ over $b_i$ for $i=1,2$, simply include them in the slide as in the left of Figure \ref{starting_position}.  Since $c$ is embedded and $a_1\cap \alpha=\emptyset$, the $n$ intersections of $c$ with $\alpha$ occur on $a_2\cup a_3$. Isotope $c$ such that $a_2\cap\alpha=\emptyset$. Since $a_1$ and $a_2$ both miss $\alpha$, we may isotope them such that $a_1$ is a small segment leaving $b_2$, $a_2$ is a small segment leaving $b_3$ with $a_1\cup a_2$ being contained in a small disk $D$ disjoint from $\lambda\cup\mu$. This way, all of the intersections of $c$ with $\lambda\cup\mu$ occur on $a_3$. Therefore, since $c$ is embedded in $\Sigma$, $a_3-D$ is a properly embedded arc in $\Sigma-D$. Thus we may assume from the beginning that the trisection as well as the embedded decomposed curve are isotopic to the model in the right of Figure \ref{starting_position} where we have suppressed the  $\beta$ and $\gamma$ curves. At this point in the argument, the $\beta$ and $\gamma$ curves cannot be assumed to lie in any kind of special position with respect to $c$.

\begin{figure}[!h]
\centering
\includegraphics[scale=.5]{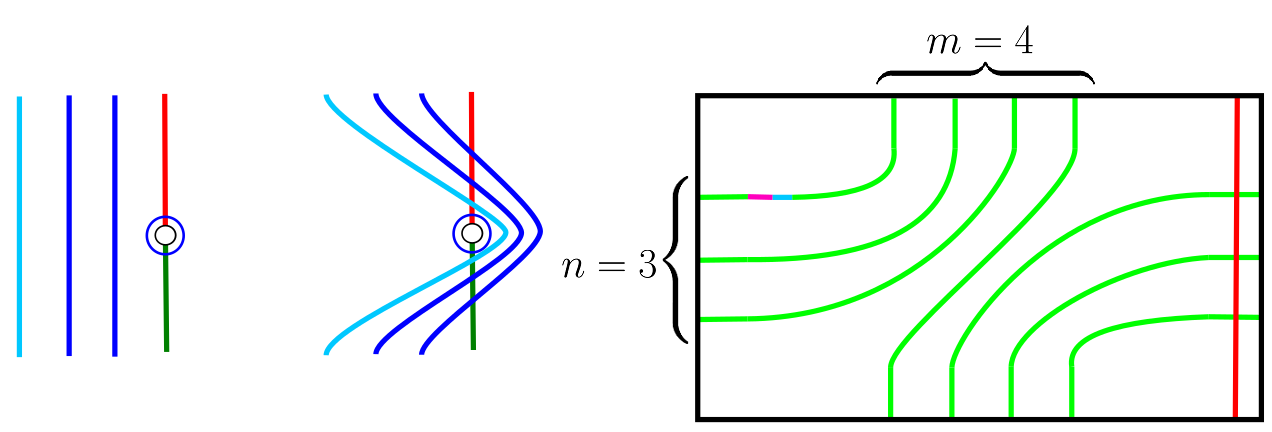}
\caption{(left) How the $\beta$ and $\gamma$ curves behave as we manipulate $c$. (right) The initial position of embedded $c$ with $[c]=[4\mu+3\lambda]$.}
\label{starting_position}
\end{figure}
Let $w$ be the word in the alphabet $\lbrace \mu,\lambda \rbrace$ which represents $[c]$ and decompose $w$ into three subwords $w_1,w_2,w_3$ such that $w=w_1w_2w_3$ and each $w_i$ records the intersections of $a_i$ with $\mu$ and $\lambda$. Initially, as described above, we have that $w_1$ and $w_2$ are empty words and $w_3$ is a certain permutation of the multiset $\lbrace m\mu,n\lambda\rbrace$ which allows $a_3$ to be an embedded arc. We claim that a decomposed $c$ representing $[m\mu+n\lambda]$, $m>0$, in such a way that $w_1$ and $w_2$ are empty words, can be slid to be a representative of $[(m-1)\mu+n\lambda]$ with $w_1$ and $w_2$ being empty words.

Suppose that $c$ represents $[m\mu+n\lambda]$, $m>0$ with $w_1$ and $w_2$ empty. Then since $m>0$, let $j\geq 0$ such that $\lambda^j\mu$ is a prefix of $w_3$. The endpoint of $a_2$ connected to $a_3$ is $b_1$. We can make $b_1$ ``move past" $\alpha$ and $a_1$ by performing the local move in Figure \ref{local_move1}.

\begin{figure}[!h]
\centering
\includegraphics[scale=.4]{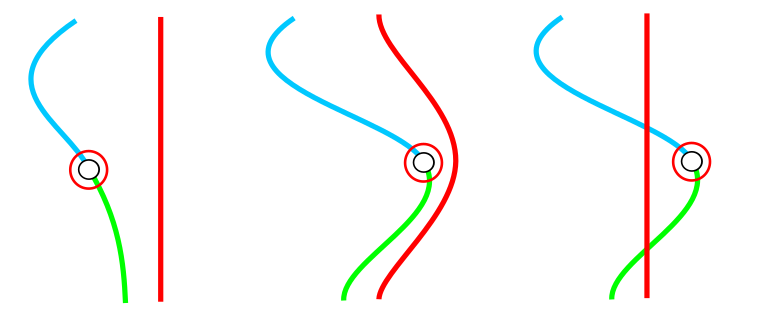}
\caption{How $b_1$ moves past $\alpha$ or $a_1$ in this argument.}
\label{local_move1}
\end{figure}
 We can therefore move $b_1$ along $a_3$ until $w_2$ reads $\lambda^j\mu$, Figure \ref{local_move2}. Using the fact that this is a genus one trisection, and the fact that $\lambda^j\mu$ is embedded, by sliding $a_2$ over $b_2$ when necessary as in Figure \ref{local_move3} we can commute $\mu$ past $\lambda^j$. After an isotopy of $a_1$ and $a_2$, reading from left to right in the two leftmost frames in \ref{local_move4}, we achieve $w_1=\mu$ and $w_2=\lambda^j$. From this position, we can slide $a_1$ against $\alpha$ to yield $w_1=\emptyset$ at the expense of adding a single twist of $a_1$ around $b_3$. This introduces a bigon between $a_1$ and $\lambda$ which is easily resolved. This slide and subsequent isotopy are shown in the last 2 frames.
\begin{figure}[!h]
\centering
\includegraphics[scale=.4]{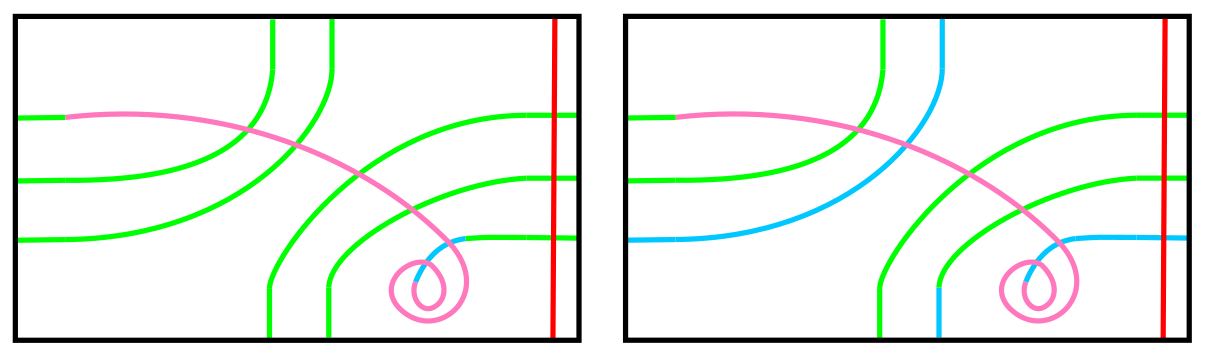}
\caption{In this example $w_2$ consumes $\lambda\mu$ from $w_3$, accomplished by extending $b_2$.}
\label{local_move2}
\end{figure} 
Now it is possible to commute $\mu$ past $\lambda^j$ by sliding $a_2$ over $b_2$ when necessary.

\begin{figure}[!h]
\centering
\includegraphics[scale=.4]{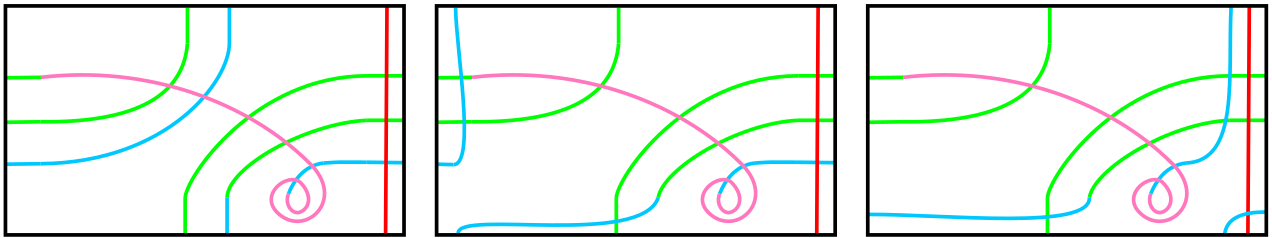}
\caption{A depiction of an isotopy of $a_2$ which corresponds to the rewriting $\lambda\mu\to\mu\lambda$ in $w_2$.}
\label{local_move3}
\end{figure} 

\begin{figure}[!h]
\centering
\includegraphics[scale=.4]{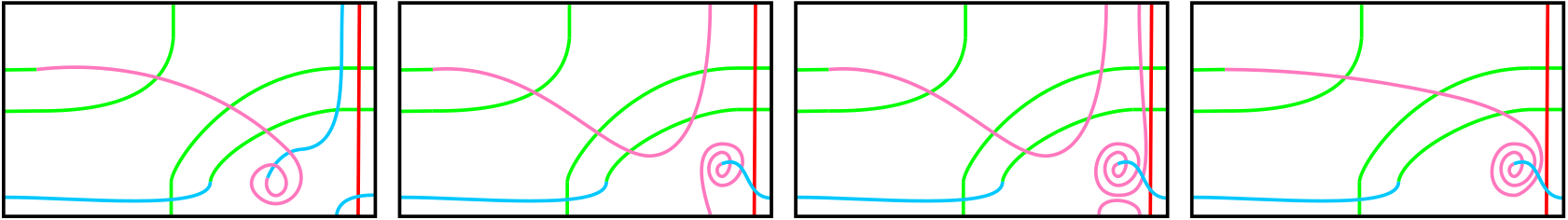}
\caption{Eliminating $\mu$ from $w$ by sliding $a_1$ over $\alpha$.}
\label{local_move4}
\end{figure}  After completing this slide to eliminate $\mu$, we can shrink $a_2$, removing $\lambda^j$ from $w_2$ and appending $\lambda^j$ to the front of $w_3$. The decomposed curve at this stage is a representative of $[(m-1)\mu+n\lambda]$ with $w_1=w_2$ empty, so the claim is proved. By repeating this process, we can slide $c$ to the model representative of $[n\lambda]$ below with $w_1=w_2$ being empty and $w_3=\lambda^l$.

\begin{figure}[!h]
\centering
\includegraphics[scale=.4]{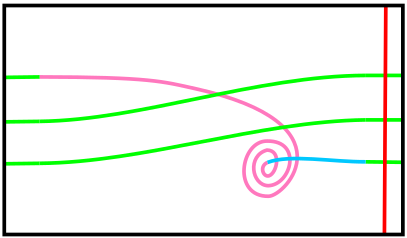}
\caption{The final product.}
\label{final_product}
\end{figure} 
\end{proof} 
Continuing the argument in Proposition \ref{prop_genus_one}. Suppose that $(\Sigma,\alpha,\beta,\gamma)$ is a genus one trisection for a simply connected 4-manifold with $[\lambda]=[\beta]$. Then we can say even more: Since $w=\lambda^n$, we may extend $a_2$ so that $w_2=\lambda^n$ and $w_1=w_3$ are empty words. Putting $\beta$ back into the picture, it is now clear that we can slide $\lambda$ off as well, see Figure \ref{final_product2}. This proves the following. 

\begin{cor}
\label{standard_cor}
Let $(\Sigma; \alpha, \beta, \gamma)$ be a genus one trisection diagram for a simply connected closed 4-manifold. Let $c$ be an embedded decomposed curve in $\Sigma$. Let $\lbrace \mu,\lambda\rbrace$ be a basis for $\pi_1(X)$ with $[\alpha]=[\mu]$ and $[\beta]=[\lambda]$. If $c=m\mu+n\lambda$, then by sliding the arcs $a_i$ over the boundaries $b_i=a_{i+1}\cap a_{i-1}$, sometimes sliding $a_1$ over $\alpha$, and sometimes sliding $a_2$ over $\beta$, $c$ is slide equivalent to an immersed curve representing $1\in\pi_1(\Sigma,b_2)$ with $a_1$ twisting around $b_3$ a total of $m$ times and $a_2$ twisting around $b_1$ a total of $n-1$ times.
\end{cor}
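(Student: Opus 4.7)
The plan is to bootstrap on the proof of Proposition \ref{prop_genus_one}. Applied directly to the embedded decomposed curve $c$ with $[c]=m\mu+n\lambda$, that proposition slide-reduces $c$ to an immersed representative of $[n\lambda]$ in which $a_1$ wraps $b_3$ a total of $m$ times, $w_1=w_2=\emptyset$, and $w_3=\lambda^n$ (the configuration of Figure \ref{final_product}). Only $[\alpha]=[\mu]$ was used for this, so the hypothesis $[\beta]=[\lambda]$ coming from simple connectivity of $X$ is still available. The symmetry of the genus one trisection diagram under the cyclic permutation $(\alpha,\beta,\gamma)\mapsto(\beta,\gamma,\alpha)$ and $(a_1,a_2,a_3)\mapsto(a_2,a_3,a_1)$ now suggests running exactly the same argument with roles swapped.

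Concretely, first I would extend $a_2$ along $a_3$, moving one occurrence of $\lambda$ at a time from $w_3$ into $w_2$; after $n$ such extensions and the obvious isotopies we reach $w_2=\lambda^n$ with $w_1=w_3$ both empty. In this configuration, $a_2$ stands with respect to $\beta$ exactly as $a_1$ stood with respect to $\alpha$ at the inductive step of Proposition \ref{prop_genus_one}. I would then repeat the proposition's key local move verbatim but cyclically shifted: each slide of $a_2$ across $\beta$ trades one $\lambda$ from $w_2$ for a single twist of $a_2$ around $b_1$, with an accompanying bigon between $a_2$ and $\mu$ that is removed by isotopy, just as in Figure \ref{local_move4}. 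The fact that $[\beta]=\lambda$ is precisely what legitimizes the $\beta$-slide as a rewriting of the word $w_2$, and this is where simple connectivity of $X$ enters.

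After $n$ iterations the word $w_2$ is the empty word, so $c$ is slide equivalent to a based loop at $b_2$ representing $1\in\pi_1(\Sigma,b_2)$; the twists of $a_1$ around $b_3$ and of $a_2$ around $b_1$ survive as features of the immersion. The one delicate point is the twist count around $b_1$: naively each of the $n$ slides contributes one twist, but the final slide produces a bigon that can be removed by an isotopy through $b_2$, absorbing one twist and yielding the claimed $n-1$. The main obstacle is this bookkeeping together with ensuring that the auxiliary curves $\gamma$ and $\beta$ (once reintroduced) do not interfere with the slides; as in Proposition \ref{prop_genus_one}, they are carried along using the local move of Figure \ref{starting_position}. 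The final picture agrees with Figure \ref{final_product2}, proving the corollary.
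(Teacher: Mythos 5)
Your proposal reproduces the paper's argument: apply Proposition \ref{prop_genus_one} to reduce $c$ to a representative of $[n\lambda]$ with $m$ twists at $b_3$ and $w_1=w_2=\emptyset$, extend $a_2$ so that $w_2=\lambda^n$ with $w_1=w_3$ empty, then cyclically repeat the $\alpha$-slide move with $\beta$ to eliminate $w_2$ while accumulating twists at $b_1$, exactly as in the text preceding the corollary and in Figure \ref{final_product2}. The only speculative detail is your bigon explanation for the $n-1$ count, but this is a bookkeeping point the paper itself leaves to the figure, and the overall strategy is the same.
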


\begin{figure}[!h]
\centering
\includegraphics[scale=.3]{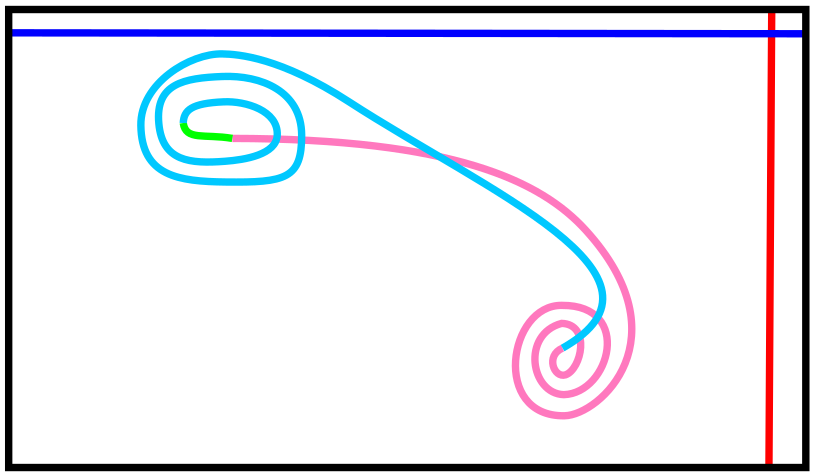}
\caption{ An immersed curve representing $1\in\pi_1(\Sigma,b_2)$ with twists around boundary points.}
\label{final_product2}
\end{figure}

\section{Trisections of Genus 3}
\label{section_genus_three}
For two irreducible fractions $\frac{a}{b}, \frac{c}{d} \in \mathbb{Q}\cup\{\frac{1}{0}\}$, define $d(\frac{a}{b}, \frac{c}{d}):=\det\big( \begin{smallmatrix}a&c\\b&d\end{smallmatrix}\big)$. 
Given an ordered triple of rational numbers $\frac{a}{b},\frac{c}{d},\frac{p}{q}\in \mathbb{Q}\cup\{\frac{1}{0}\}$, we can consider the diagram $D(\frac{a}{b},\frac{c}{d},\frac{p}{q})$ as in the right side of Figure \ref{fig_farey_trisections}. Here one curve of each $\alpha$, $\beta$, $\gamma$ set has slope $\frac{a}{b}$, $\frac{c}{d}$, $\frac{p}{q}$, respectively, in the torus obtained by compressing the genus three surface along the two central curves of the same color as in the left side of Figure \ref{fig_proof_farey_trisections}. 
The diagram $D(\frac{a}{b},\frac{c}{d},\frac{p}{q})$ is a trisection diagram for some closed smooth 4-manifold if and only if each pair $x,y\in \{\frac{a}{b},\frac{c}{d},\frac{p}{q}\}$ satisfies the inequality $|d(x,y)|\leq 1$. 
If the three numbers in the triplet are all distinct with $d(x,y)=\pm 1$ for all $x\neq y\in \{\frac{a}{b},\frac{c}{d},\frac{p}{q}\}$, then we call $\{\frac{a}{b},\frac{c}{d},\frac{p}{q}\}$ a \textbf{Farey triplet}. In this case, $\{\frac{a}{b},\frac{c}{d},\frac{p}{q}\}$ corresponds to a triangle in the Farey graph. 
\begin{figure}[h]
\centering
\includegraphics[scale=.5]{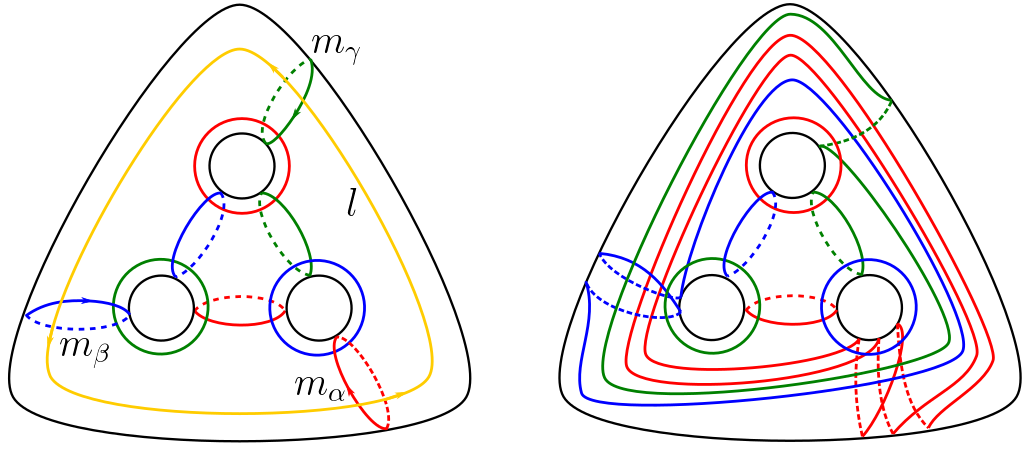}
\caption{(left) The longitude (l) and meridian (m) for each of the three tori. (right) The diagram $D(\frac{2}{3}, \frac{1}{1},\frac{1}{2})$.}
\label{fig_farey_trisections}
\end{figure}

The question we will discuss now is what 4-manifolds the diagrams $D(\frac{a}{b},\frac{c}{d},\frac{p}{q})$ represent. Meier proved in \cite{spun_trisections} that $D(\frac{q}{p},\frac{q}{p},\frac{q}{p})$ is the diagram of a spun lens space $L(p,q)$. He conjectured that the only 4-manifolds admitting genus three trisections are spun lens spaces and certain connected sums of combinations of $S^1\times S^3$, $S^2\times S^2$, $\CP2$, and $\overline{\CP2}$. We will call the latter combinations standard manifolds. 
Note that $D(\frac{a}{b},\frac{c}{d},\frac{p}{q})$ is simply connected whenever $\{\frac{a}{b},\frac{c}{d},\frac{p}{q}\}$ contains two or three distinct numbers. Thus, such diagrams must represent standard manifolds if we expect the conjecture to be true. We prove that this is indeed the situation. 

\begin{thm} \label{thm_farey_trisections}
Let $\frac{a}{b},\frac{c}{d},\frac{p}{q}\in \mathbb{Q}\cup \{\frac{1}{0}\}$ satisfying $|d(x,y)|\leq 1$ for all $x, y \in \{\frac{a}{b},\frac{c}{d},\frac{p}{q}\}$. Then $D(\frac{a}{b},\frac{c}{d},\frac{q}{p})$ describes a trisection diagram for
\begin{enumerate} 
\item either $\CP2\#\CP2\#\overline{\CP2}$ or $\CP2\#\overline{\CP2}\#\overline{\CP2}$ if $\{\frac{a}{b},\frac{c}{d},\frac{p}{q}\}$ is a Farey triplet,
\item either $S^2\times S^2$ or $S^2\wt \times S^2$ if $\{\frac{a}{b},\frac{c}{d},\frac{p}{q}\}=\{x,y\}$ with $d(x,y)=\pm 1$,
\item a spun lens space if $\{\frac{a}{b},\frac{c}{d},\frac{p}{q}\}=\{x\}$.
\end{enumerate}
\end{thm}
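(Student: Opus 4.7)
The plan is to invoke the pasting lemma (Theorem \ref{pasting_lemma_star}) exactly as announced in the introduction: to decompose $D(\frac{a}{b}, \frac{c}{d}, \frac{p}{q})$ as the pasting of a $\star$-trisection for $S^2 \times D^2$ with a $\star$-trisection for $X \setminus \nu(c)$, where $X \in \{S^1 \times S^3, S^4, \CP2, \overline{\CP2}\}$ and $c$ is a simple closed curve, the choice of $X$ and of $c$ being dictated by the combinatorics of the three fractions.

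First, I would isolate the ``standard'' portion of the diagram coming from the top and middle curves of each color. Cutting $\Sigma$ along a suitable separating curve through this region, and verifying that the induced sub-diagram matches one of the $\star$-trisection diagrams for $S^2 \times D^2$ of Figure \ref{diagrams_S2D2}, produces one of the two pieces. The residual sub-diagram lives on a once-punctured torus, carries the three torus-knot slopes $\frac{a}{b}, \frac{c}{d}, \frac{p}{q}$, and, via Remark \ref{remark_complement_loop}, is identified as a $\star$-trisection diagram for $X \setminus \nu(c)$ where $(\Sigma_1; \frac{a}{b}, \frac{c}{d}, \frac{p}{q})$ is a genus-one trisection diagram for the closed $4$-manifold $X$. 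The hypothesis $|d(x,y)| \leq 1$ is exactly what makes each pair into a Heegaard diagram for $S^3$ or $S^1 \times S^2$, so this triple \emph{is} a genus-one trisection diagram; a direct identification of genus-one trisections then gives $X = S^1 \times S^3$ when the three fractions coincide (case 3), $X = S^4$ when exactly two fractions are distinct with $|d(x,y)| = 1$ (case 2), and $X \in \{\CP2, \overline{\CP2}\}$ with sign determined by the orientation of the Farey triangle in case 1.

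With this decomposition in hand, Theorem \ref{pasting_lemma_star} realizes the 4-manifold of $D(\frac{a}{b}, \frac{c}{d}, \frac{p}{q})$ as $(X \setminus \nu(c)) \cup_{S^2 \times S^1} (S^2 \times D^2)$. In cases 1 and 2, the loop $c$ is nullhomotopic in $X$ (since $c$ sits in the genus-one trisection sector and $\pi_1$ of a simply connected genus-one trisection is trivial), so the pasting amounts to $1$-surgery on a trivial loop, giving $X \# (S^2 \times S^2)$ or $X \# (S^2 \tilde{\times} S^2)$ according to whether the gluing is untwisted or Gluck-twisted on the $S^2$-fiber. The classical identity $\CP2 \# (S^2 \tilde{\times} S^2) \cong \CP2 \# (S^2 \times S^2) \cong \CP2 \# \CP2 \# \overline{\CP2}$ (and its conjugate) then collapses the four potential outcomes in case 1 into the two listed manifolds, while in case 2 it gives exactly $S^2 \times S^2$ or $S^2 \tilde{\times} S^2$. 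In case 3, $c$ is an essential generator of $\pi_1(S^1 \times S^3) = \Z$ and the framing data encoded by $\frac{q}{p}$ converts the pasting into the spin of the lens space $L(p,q)$, recovering Meier's computation from \cite{spun_trisections}.

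The main obstacles I expect are bookkeeping ones. Identifying the residual genus-one sub-diagram with the $\star$-trisection for $X \setminus \nu(c)$ produced by the construction of Section \ref{section_curve_complement} requires matching curves up to handle slides, and Proposition \ref{prop_genus_one} is tailored to exactly this matching. Determining $X$ as $\CP2$ vs.\ $\overline{\CP2}$ in case 1 reduces to computing the sign of the relevant Farey determinant against a consistently chosen orientation of the torus. Finally, pinning down whether the pasting map acts trivially or as a Gluck twist on the $S^2$ factor requires tracking the framing inherited from the separating sphere across the two pieces, which is where care must be taken; fortunately, once the connect-sum description is established, the identity $\CP2 \# (S^2 \times S^2) \cong \CP2 \# (S^2 \tilde{\times} S^2)$ absorbs any residual ambiguity in cases where $X \in \{\CP2, \overline{\CP2}\}$.
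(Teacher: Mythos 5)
Your proposal follows essentially the same route as the paper's proof: decompose the genus-three Farey diagram into a piece for $S^2\times D^2$ and a residual piece identified (via Remark \ref{remark_complement_loop}) with the complement of a decomposed loop in a genus-one trisection for $X\in\{S^1\times S^3, S^4, \CP2, \overline{\CP2}\}$, then invoke the Pasting Lemma and simple connectivity of $X$ to push $c$ into a ball. One bookkeeping error: the decomposition is not along a single separating curve into once-punctured pieces. As the paper makes explicit (and as the hypotheses of Theorem \ref{pasting_lemma_star} require), each of the three 3-manifold sectors $W_i\cap W_j\cap M$ must contribute a boundary circle to the gluing surface, so the surface is cut along three circles into a \emph{thrice}-punctured sphere and a \emph{thrice}-punctured torus; a single separating curve would not place you in the setting of the Pasting Lemma.

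Where you genuinely diverge is the final identification. The paper concludes the ``either/or'' determination by computing the intersection form $Q_X$ from the diagram via \cite{homology_trisection}. You instead observe that the gluing is $1$-surgery on a nullhomotopic loop, yielding $X\#(S^2\times S^2)$ or $X\#(S^2\widetilde\times S^2)$, and then invoke the classical cancellation $\CP2\#(S^2\widetilde\times S^2)\cong\CP2\#(S^2\times S^2)$ (and its conjugate) to collapse the ambiguity in case 1, while in case 2 the two bundles stay distinct. This is a clean alternative to the intersection-form computation and in fact explains more transparently \emph{why} only two outcomes occur in case 1. It does shift the residual work to determining the sign of $X$ ($\CP2$ vs.\ $\overline{\CP2}$) from the orientation of the Farey triangle, which you acknowledge but don't carry out; the paper sidesteps this by computing $Q_X$ directly. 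Both routes are valid.
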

To find out the specific 4-manifold the above diagrams represent, it is enough to compute its intersection matrix using \cite{homology_trisection} or \cite{torsion_trisection}. 
\begin{proof} 
The third part was done by Meier in \cite{spun_trisections}. Denote by $X$ the 4-manifold represented by the diagram $D(\frac{a}{b},\frac{c}{d},\frac{p}{q})$. Notice that we can decompose the genus three surface in Figure \ref{fig_farey_trisections} into a thrice punctured sphere and a thrice puntured torus glued together along their boundaries, see Figure \ref{fig_proof_farey_trisections}. Theorem \ref{pasting_lemma} implies that $X$ decomposes as the union $X=(S^2\times D^2) \cup_\partial Y$ for some 4-manifold $Y$.
\begin{figure}[h]
\centering
\includegraphics[scale=.5]{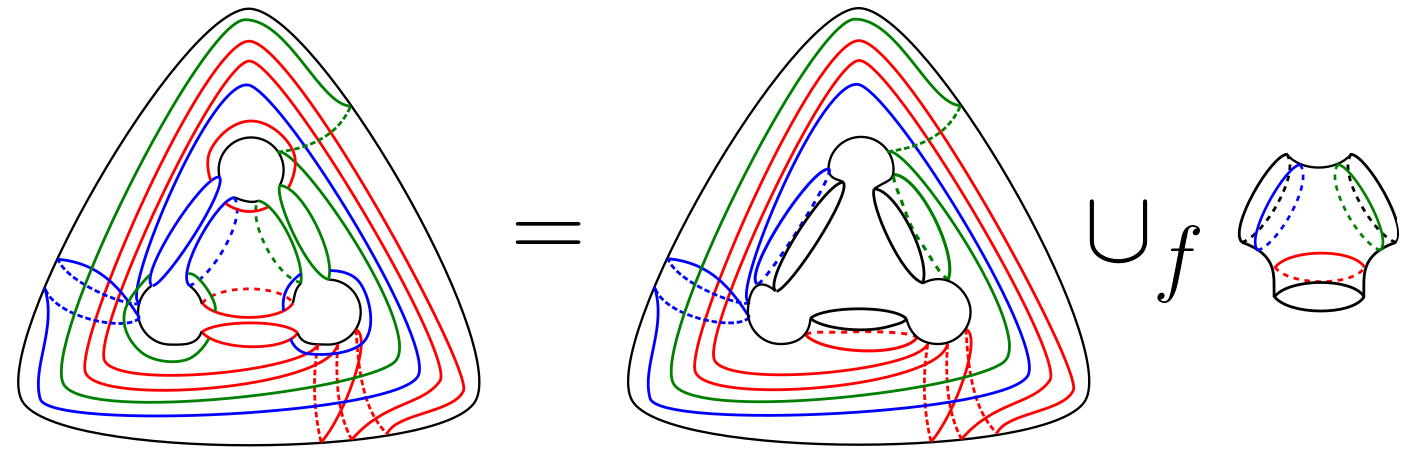}
\caption{Decomposing $X$ as $(S^2\times D^2) \cup_\partial Y$ glued via some map $f:S^1\times S^2 \ra \partial Y$.}
\label{fig_proof_farey_trisections}
\end{figure}

Remark \ref{remark_complement_loop} implies that the $\star$-trisection diagram corresponding to $Y$ can be viewed as the result of taking the complement of a decomposed curve in a genus one trisection. So if $\{\frac{a}{b},\frac{c}{d},\frac{p}{q}\}$ is a Farey triplet, the diagram for $Y$ implies that $Y$ is the complement of a curve $c$ in $\CP2$ (or $\overline{\CP2}$). Since $\CP2$ is simply connected, we can isotope $c$ to lie inside a small 4-ball and so $Y=\CP2\# (S^2\times D^2)$. Hence $X$ is the connected sum of $\CP2$ with a sphere bundle over the sphere. 
Suppose now $\{\frac{a}{b},\frac{c}{d},\frac{p}{q}\}=\{x,y\}$ with $d(x,y)=\pm1$, then Remark \ref{remark_complement_loop} implies that $Y$ is the complement of a decomposed loop in a genus one trisection for $S^4$. Thus $Y = S^4-(S^1\times B^3)$ and $X$ is a copy of $S^2\times S^2$ or $S^2\wt \times S^2$. 

We have shown the 4-manifolds associated to the trisection diagrams $D(\frac{a}{b},\frac{c}{d},\frac{p}{q})$ are diffeomorphic to standard ones. To figure out which ones specifically, it is sufficient to compute the intersection form $Q_X$.
The intersection form of $X$, computed from $D(\frac{a}{b},\frac{c}{d},\frac{p}{q})$ using \cite{homology_trisection}, is given by $$Q_X=
\begin{bmatrix}
 \frac{b d}{a d-b c} & -1 & \frac{b (c q-d p)}{b c-a d} \\
 -1 & 0 & 0 \\
 \frac{b (c q-d p)}{b c-a d} & 0 & \frac{(b p-a q) (c q-d p)}{b c-a d} \\
\end{bmatrix}.$$
In the case that $\frac{c}{d}=\frac{p}{q}$, notice the third column and third row become zero and we are left with the intersection form $Q_X=
\begin{bmatrix}
 \frac{b d}{a d-b c} & -1  \\
 -1 & 0 
\end{bmatrix}$ which is equivalent to the intersection form for $S^2\times S^2$ when $bd$ is even and to $S^2\wt \times S^2$ when $bd$ is odd. If all rationals are distinct, then without loss of generality suppose that $d(\frac{a}{b},\frac{c}{d})=1$. Because $\{\frac{a}{b},\frac{c}{d},\frac{p}{q}\}$ is a Farey triple, we know that $\frac{p}{q}=\frac{a\pm c}{b \pm d}$. This gives $$Q_X=
\begin{bmatrix}
 bd & -1 & b \\
 -1 & 0 & 0 \\
 b & 0 & \mp 1 \\
\end{bmatrix}.$$
By inspection, one determines that $Q_X$ is equivalent to $\langle 1\rangle\oplus \langle \mp 1\rangle\oplus \langle -1\rangle$.
\end{proof} 

\subsection{Farey Trisections are Standard}
We will now demonstrate that in cases 1 or 2 of Theorem \ref{thm_farey_trisections}, the diagrams $D(\frac{a}{b},\frac{c}{d},\frac{p}{q})$ are actually reducible and thus standard.
\begin{thm}\label{thm_farey_trisections_std}\label{thm_farey_std}
Let $\{\frac{a}{b},\frac{c}{d},\frac{p}{q}\}\subset\mathbb{Q}\cup\{\frac{1}{0}\}$ with $d(x,y)\leq 1$ for each $x,y\in \{\frac{a}{b},\frac{c}{d},\frac{p}{q}\}$. If at least two of these fractions are distinct, then $D(\frac{a}{b},\frac{c}{d},\frac{p}{q})$ is handle slide equivalent to the standard diagram for $T\# S$ where $T\in \lbrace S^4,\CP2,\overline{\CP2}\rbrace$ and $S\in\lbrace S^2\times S^2, S^2\wt \times S^2\rbrace$.
\end{thm}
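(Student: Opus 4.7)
The plan is to apply the decomposition from the proof of Theorem \ref{thm_farey_trisections}: the Farey diagram $D(\frac{a}{b},\frac{c}{d},\frac{p}{q})$ is the $\star$-trisection diagram obtained by applying the complement construction of Remark \ref{remark_complement_loop} to an embedded decomposed curve $c$ with $|a_i|=1$ in a genus one trisection diagram $(\Sigma_1;\alpha_1,\beta_1,\gamma_1)$ of slopes $\frac{a}{b},\frac{c}{d},\frac{p}{q}$, followed by pasting a standard trisection of $S^2\times D^2$ via Theorem \ref{pasting_lemma_star}. When at least two of the three fractions are distinct, the closed 4-manifold $T$ underlying $(\Sigma_1;\alpha_1,\beta_1,\gamma_1)$ is one of $S^4$, $\CP2$, or $\overline{\CP2}$, hence simply connected; and the Farey condition $|ad-bc|=1$ allows me to choose two of the slope curves, say $\alpha_1$ and $\beta_1$, to serve as a basis for $\pi_1(\Sigma_1)\cong\Z^2$.

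The engine of the reduction is Corollary \ref{standard_cor}. It permits sliding the arcs $a_i$ of $c$ over the endpoints $b_i$ and over $\alpha_1,\beta_1$ until $c$ represents the trivial class in $\pi_1(\Sigma_1,b_2)$, supported in a small disk of $\Sigma_1$ modulo twists about the boundary points. I would translate each of these slides on the genus one surface into an honest handle slide on the $\star$-trisection diagram for $Y$ via Remark \ref{remark_complement_loop}: slides of $a_i$ over $b_j$ become slides of the pushoff curves (the slope curves of the Farey diagram) over boundary-parallel curves around the punctures, while slides of $a_i$ over $\alpha_1$ or $\beta_1$ become genuine slides of one slope curve in $\Sigma$ over another.

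Once $c$ is in the standardized position, I would argue that the $\star$-trisection diagram for $Y$ splits as a boundary connected sum of the genus one trisection diagram for $T\setminus B^4$ with the standard $\star$-trisection diagram for the complement of an unknot in $B^4$; pasting $S^2\times D^2$ back in via Theorem \ref{pasting_lemma_star} then turns the second summand into the standard genus two trisection of $S^2\times S^2$ or $S^2\wt\times S^2$, producing the standard genus three diagram for $T\# S$. To pin down $T$ and $S$, I would appeal directly to the intersection form computation carried out in the proof of Theorem \ref{thm_farey_trisections}: the sign of the $\mp 1$ entry determines $T\in\{\CP2,\overline{\CP2}\}$ in the Farey triplet case, and the parity of $bd$ determines $S\in\{S^2\times S^2, S^2\wt\times S^2\}$ in the two-distinct-fractions case.

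The main obstacle is the faithful translation of the arc slides into handle slides that respect the common-compression-body data $\Delta_{\eps\cap\mu}$ of Remark \ref{remark_diagramatics}. The delicate cases are slides that drag an arc across the third color's arc $a_3$, since the pushoff of $a_3$ and the boundary-parallel curves around the $b_i$ interact with all three compression bodies simultaneously; I also need to verify that, after the reduction, the supporting disk of $c$ can be chosen disjoint from $\gamma_1$ as well as from $\alpha_1\cup\beta_1$, which I expect to follow by decomposing the torus along all three slope curves and placing the disk in the resulting complementary region.
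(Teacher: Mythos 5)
Your proposal tracks the paper's strategy closely: decompose the diagram into the thrice-punctured torus (the complement of the decomposed $\lambda$-curve in a genus-one trisection) and the thrice-punctured sphere, standardize the decomposed curve using Corollary~\ref{standard_cor}, and then reassemble. The concerns you flag — translating arc slides into bona fide handle slides compatible with the common-compression-body data, and making the supporting disk disjoint from $\gamma_1$ — are legitimate but resolvable along the lines the paper does (for the latter, the paper slides $\gamma$ against $b_3$ using that $c'$ is null-homotopic, so this part of your plan is on the right track).

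The genuine gap is the assertion that pasting $S^2\times D^2$ back ``turns the second summand into the \emph{standard} genus two trisection of $S^2\times S^2$ or $S^2\wt\times S^2$.'' Nothing in the reduction makes this manifest. After applying Corollary~\ref{standard_cor}, the curve $c'$ carries residual twisting around the puncture points $b_i$ — that twisting is precisely what encodes the gluing framing and is not obviously removable by further slides (compare Question~\ref{question_Gluck}, which is left open in the paper precisely because these twisted diagrams are not visibly standard). So once you split off a separating curve $\delta$ and glue the genus-two piece $Q\cup_f P$, you obtain \emph{some} genus-two trisection diagram of a closed, simply connected 4-manifold with full-rank intersection form, but not one that is recognizably the standard diagram. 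The paper closes this gap by invoking the Meier--Zupan classification of genus-two trisections \cite{genus_two_std}, which says any genus-two trisection of $S^2\times S^2$ or $S^2\wt\times S^2$ is standard; that theorem, not a direct diagram manipulation, is what delivers ``handle slide equivalent to the standard diagram.'' Your plan either needs to cite that classification at this step or supply a direct (and, given the twists, nontrivial) sequence of slides normalizing $Q\cup_f P$.
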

\begin{proof} Decompose $D(\frac{a}{b},\frac{c}{d},\frac{p}{q})$ into two pieces as suggested by Figure \ref{fig_proof_farey_trisections}; let $D'$ denote the thrice punctured torus component of this decomposition and let $P$ denote the thrice punctured sphere component. Notice that this $\star$-trisection $D'$ is a diagram for the complement of an embedded curve in a simply connected 4-manifold. Specifically, $D'$ is the result of taking the complement of the $c=\lambda$ curve in a genus 1 trisection with curves $\alpha=a\lambda+b\mu$, $\beta=c\lambda+d\mu$, and $\gamma=p\lambda+q\mu$ where the curve $c$ has been decomposed as suggested by the left side of Figure \ref{fig_proof_farey_trisections}. By Corollary \ref{standard_cor}, the decomposed curve $c$ is slide equivalent to an immersed decomposed curve $c'$ representing the trivial curve where some of the arcs twist around the boundary points, as in the last frame of Figure \ref{final_product2}. In particular, $\alpha$ and $\beta$ are disjoint from $c'$. Using the fact that $c'$ represents the trivial loop, we can slide $\gamma$ against $b_3$ until $\gamma$ is disjoint from $c'$ also. Thus there is a curve $\delta$ separating $c'$ from $\alpha,\beta,\gamma$. By surgering $D'$ along $\delta$ we get two components. Let $Q$ be the component coming from the side of $\delta$ containing $c'$ and let $T$ be the torus component containing $\alpha,\beta,\gamma$. Notice that $S=Q\cup_f P$, with the attaching map $f$ given by the $a_i$, is a genus two trisection of a closed 4-manifold with an intersection form of full rank. By the work of Meier and Zupan \cite{genus_two_std}, $S$ is a trisection diagram for $S^2\times S^2$ or $S^2\wt\times S^2$. The component $T$ is a genus one trisection of a simply connected closed 4-manifold: $S^4,\CP2$ or $\overline{\CP2}$.
\end{proof}
\begin{remark}[Spun lens spaces]
In \cite{spun_trisections}, Meier asked if the diagrams $D(\frac{p}{q},\frac{p}{q},\frac{p}{q})$ and $D(\frac{1}{q},\frac{1}{q},\frac{1}{q})$ depict diffeomorphic trisections. 
With this in mind, we can proceed as in Theorem \ref{thm_farey_trisections_std} and see that Proposition \ref{prop_genus_one} implies that the diagrams $D(\frac{p}{q},\frac{p}{q},\frac{p}{q})$ and $D(\frac{1}{q},\frac{1}{q},\frac{1}{q})$ are handle slide equivalent to diagrams which are identical outside of the regular neighborhood of a $\gamma$ curve, say $\gamma_0$. 
In this annulus $\nu(\gamma_0)$, the diagrams differ by their $\alpha$ curves, where one twists $q$ times around this annulus and the other twists once (see Figure \ref{fig_spun_lens_space}). 
This motivates the following question about uniqueness of trisection diagrams for 1-surgeries.
\end{remark}
\begin{question} \label{question_Gluck}
Let $c$ be an embedded loop in a 4-manifold $X$ represented by a (possibly immersed) decomposed curve in the genus $g$ trisection surface. If $|a_i|=1$ in the decomposition of the curve, is the resulting genus $g+2$ trisection diagram for $\left(X-\eta(c)\right)\cup(S^2\times D^2)$, with a specific choice of framing in $\Z/2\Z$, unique up to handle slides and diffeomorphisms of the surface? 
In particular, are the trisections given by the diagrams in Figure \ref{fig_spun_lens_space} diffeomorphic?
\begin{figure}[h]
\centering
\includegraphics[scale=.07]{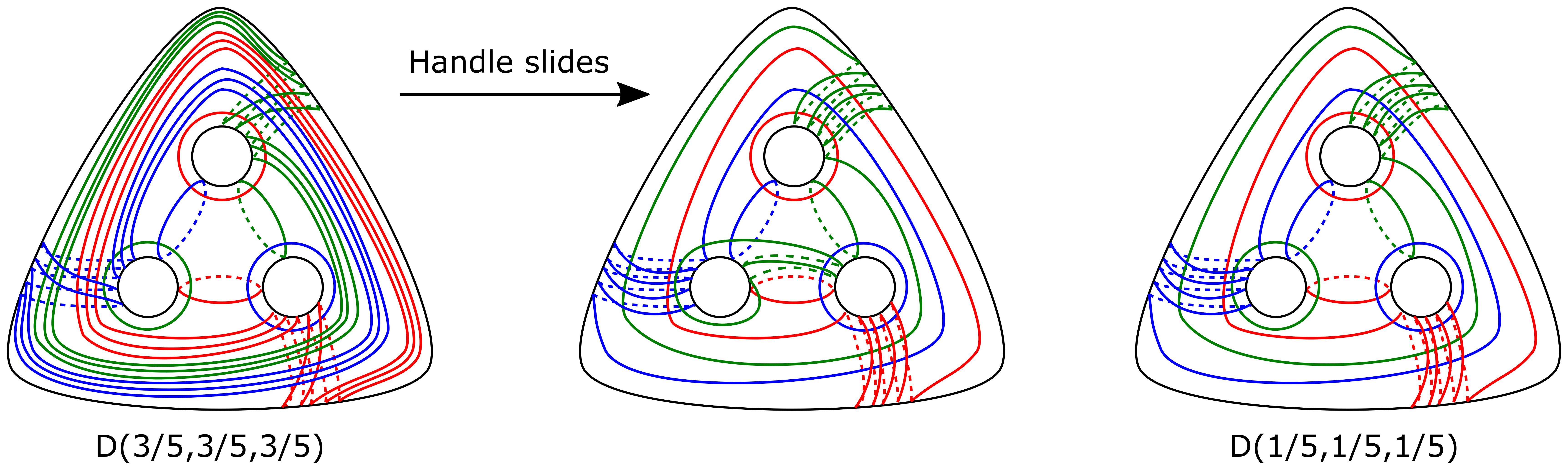}
\caption{How the handle slides described in Proposition \ref{prop_genus_one} change the diagram $D(\frac{3}{5},\frac{3}{5},\frac{3}{5})$. The middle and right diagrams difeer by removing an embedded $S^2\times D^2$ and regluing it using a power of the Gluck twist map. Question \ref{question_Gluck} asks if whether these two diagrams depict diffeomorphic trisections for even powers the Gluck twist map.}
\label{fig_spun_lens_space}
\end{figure}
\end{question}

\section{Surface Surgery} 
\label{section_surface_surgery}
The main goal of this section is to use $\star$-trisections to draw diagrams for closed 4-manifolds obtained by various kinds of surgery. 
Drawings for these 4-manifolds could be theoretically derived using previous work on relative trisections. 
However, the genus of  these diagrams will often be large. 
For example, in \cite{trisections_via_lefschetz}, a genus seven trisection of $T^2\times S^2$ was obtained by taking the double of a genus 3 relative trisection for $T^2\times D^2$. 
In Figure \ref{fig_trisection_T2xS2}, we used a genus one $\star$-trisection for $T^2\times D^2$ to draw a genus four trisection for $T^2\times S^2$. 
This section provides diagrams for the Cacime Surface (Section \ref{section_cacime}), and algorithms to perform Fintushel-Stern knot surgery (Section \ref{section_knot_surgery}) and torus surgeries (Section \ref{section_torus_surgery}) such as Logarithmic transforms and Luttinger transforms. 
The careful reader might observe that the diagrams for these transformations change by concatenating a fixed picture or by changing some loops in a high enough stabilization of the original trisection diagram. Thus to study the behavior of 4-manifold invariants under surface surgery, it could be worthwhile to explore each local modifications in detail. 


\subsection{Embedded Surfaces and their Complements}\label{section_surface_complement}
Let $X=X_1\cup X_2\cup X_3$ be a $\star$-trisected 4-manifold and let $F\subset X$ be an embedded closed surface. Following Meier and Zupan \cite{bridge_trisections_4M}, we say that $F$ is in \textbf{$(g;c_1,c_2,c_3)$-bridge position} with respect to the $\star$-trisection if, for each $i \neq j$, $\mathcal{D}_i=F\cap X_i$ is a collection of $c_i$ trivial disks in $X_i$, and the arcs $\mathcal{D}_i\cap \mathcal{D}_j$ form a trivial $b$-tangle in the compression body $C_{ij}=X_i\cap X_j$. As a consequence, the intersection $F\cap (X_1\cap X_2\cap X_3)$ is a collection of $2b$ points. 

Given a $\star$-trisection diagram $(\Sigma;\alpha,\beta, \gamma)$, we can decode\footnote{This is a consequence of Lemma \ref{lem_unique_trivial_disks}.} a $(b;c_1, c_2, c_3)$-bridge trisection of $F$ by three sets of $b$ embeded arcs $s_{\alpha}$, $s_{\beta}$, $s_{\gamma}$ in $\Sigma$ corresponding to the shadows of the trivial tangles $F\cap C_{\varepsilon}$, $\varepsilon\in\{\alpha,\beta,\gamma\}$. The shadow arcs have $2b$ common endpoints $t=F\cap \Sigma$. We consider the arcs in $s_{\varepsilon}$ to be disjoint from the loops in $\varepsilon$. Thus, isotopy of the arcs $F\cap C_{\varepsilon}$ relative to their boundaries corresponds to sliding the shadows $s_{\varepsilon}$ over $\varepsilon$. 
For each pair $(\eps, \mu)\in \{(\alpha, \beta), (\beta, \gamma), (\gamma, \alpha)\}$, the arcs $s_{\eps}$, $s_\mu$ in $\Sigma$ determine a $c_{i}$-component unlink in bridge position with respect to the Heegaard splitting $C_\eps\cup_{\Sigma} C_\mu$, where $i\in \{1,2,3\}$ is the index corresponding to the pair $(\eps, \mu)$. 
For more details and examples of bridge trisections see \cite{bridge_trisection_S4,bridge_trisections_4M,rational_surfaces}. 

Given a bridge trisected surface $F\subset X$, there is an obvious $\star$-trisection for the complement $X-\eta(F)$ given by $\wt{X}_i=X_i-\eta(\mathcal{D}_i)$, $i=1,2,3$. Let $s_{\alpha}$, $s_{\beta}$, $s_{\gamma}$ be a set of shadows for $F$ in the $\star$-trisection diagram $(\Sigma;\alpha,\beta,\gamma)$. 
A $\star$-trisection diagram for $X-\eta(F)$ is given by $(\wt\Sigma;\wt\alpha,\wt\beta,\wt\gamma)$ where $\wt \Sigma=\Sigma-\eta(t)$ is a copy of $\Sigma$ with $|t|$ disks removed, and for each $\eps \in \{\alpha,\beta,\gamma\}$, $\wt\eps=\eps \cup \eps'$ where the extra loops in $\eps'$ are obtained from the non-boundary parallel components of $\partial \eta\left(s_\eps \cup \eta(t)\right)$. 
Using the notation of Remark \ref{remark_diagrams_standard_piece}, the curves in $\eps'$ correspond to new curves of type 2. 
See Figure \ref{fig_surface_complement} for a concrete example. 
In the rest of the section we will see how we can use $\star$-trisection diagrams to draw diagrams for surgeries along bridge trisected surfaces.

\begin{remark}
It is important to mention that this decomposition was previously discussed by Kim and Miller in \cite{surface_complements}. The authors of \cite{surface_complements} observed that the above $\star$-trisection is a classic trisection only when $F$ is a 2-sphere. To obtain a classical relative trisection, Kim and Miller performed a sequence of ``boundary-stabilizations" to the $\star$-trisection above. 
This procedure increases the complexity of the trisection surface in a controlled way, and in principle can also be used to perform surgery along surfaces following the methods in this section.
\end{remark}
\begin{figure}[!h]
\centering
\includegraphics[scale=.3]{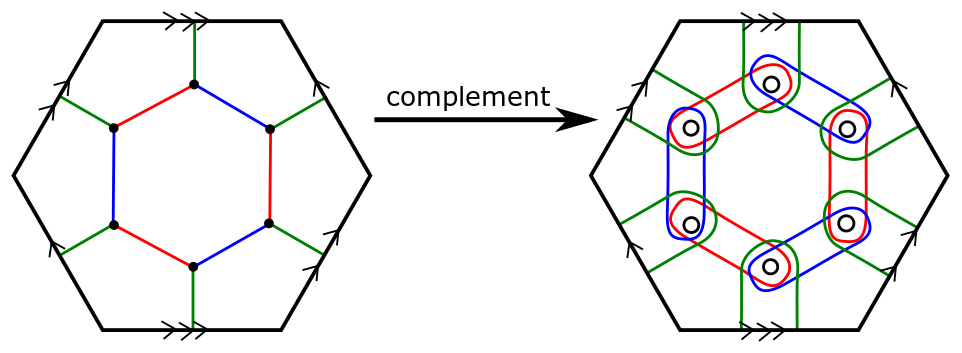}
\caption{(left) A bridge trisection for $T^2\times\{pt\}$ inside $T^2\times D^2$. (right) A $\star$-trisection for $T^2\times S^1\times[0,1]=T^3\times [0,1]$.}
\label{fig_surface_complement}
\end{figure} 
\begin{examp}[Trisecting the 4-torus]
Figure \ref{fig_surface_complement} shows a $\star$-trisection diagram for $T^3\times[0,1]$ obtained from removing an open neighborhood of $T^2\times\{0\}$ inside $T^2\times D^2$. We can poke this diagram in order to satisfy the boundary conditions of Lemma \ref{pasting_lemma_star}. To build $T^4$, we glue two copies of $T^3\times[0,1]$ along the identity in their boundaries. Diagram-wise, this corresponds to `double' the poked diagram for $T^3\times[0,1]$ and add some extra loops using Remark \ref{remark_diagramatics_pasting_lemma}. For each color $\eps \in \{\alpha, \beta, \gamma\}$, we draw embedded arcs disjoint from $\eps$ cutting the compressed surface $\Sigma_\eps$ into a disjoint union of disks. The new curves are given by gluing two copies of the arcs, one on each surface, along their endpoints as in Figure \ref{fig_trisecting_T4_2}. The resulting diagram represents a (10;4)-trisection for the 4-torus. 
\end{examp}
\begin{figure}[!h]
\centering
\includegraphics[scale=.3]{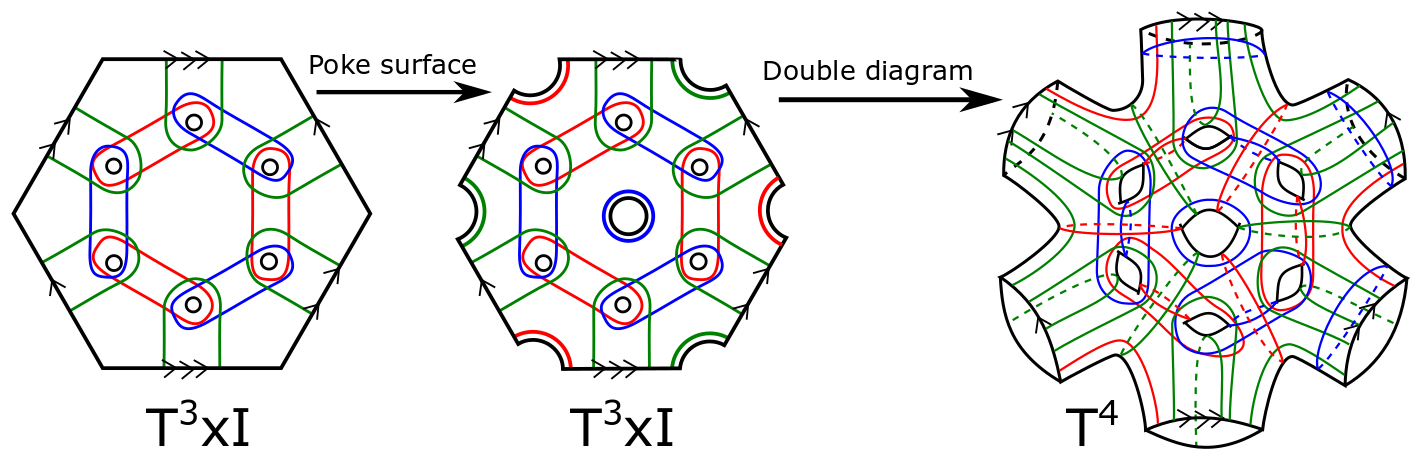}
\caption{Trisecting the 4-torus.}
\label{fig_trisecting_T4_1}
\end{figure} 
\begin{figure}[!h]
\centering
\includegraphics[scale=.25]{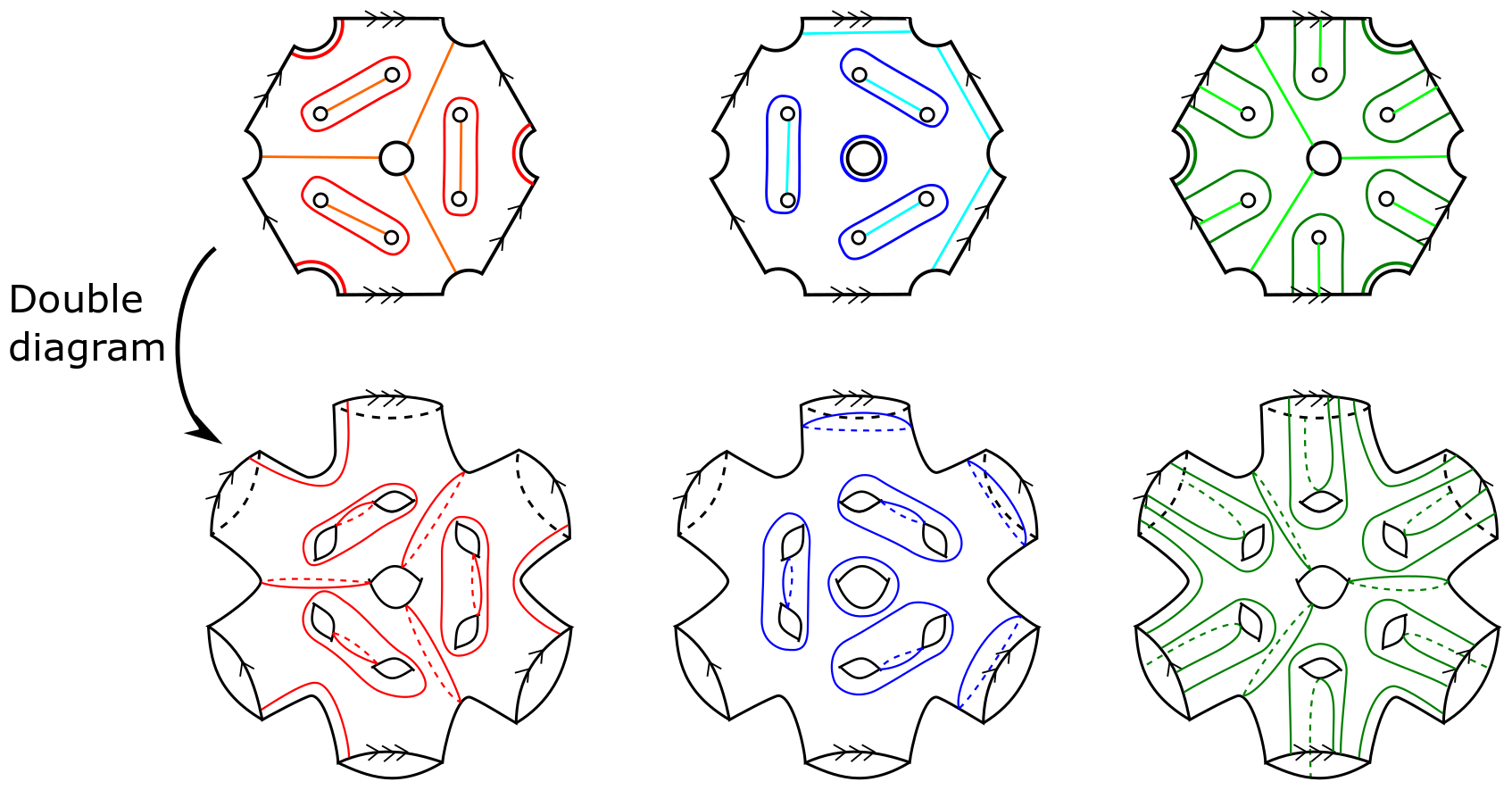}
\caption{How to double the diagram for $T^3\times[0,1]$.}
\label{fig_trisecting_T4_2}
\end{figure} 
The following proposition explains how obtain a trisection for fiber sums of two 4-manifolds. 
\begin{prop}\label{prop_fiber_sum}
Let $X$ and $X'$ be two closed connected 4-manifolds. Let $\tau=W_1\cup W_2 \cup W_3$ (resp. $\tau'$) be a $(g;k_1,k_2,k_3)$-trisection for $X$ (resp. $X'$). Suppose $F\subset X$ (resp. $F'\subset X'$) is a closed embedded surface in $(b;c_1,c_2,c_3)$-bridge position with respect to $\tau$ (resp. $\tau'$). Suppose $F\cdot F = F'\cdot F'$ and that the cell decompositions in $F$ and $F'$ induced by the bridge trisections agree; in other words, $b=b'$ and $c_i=c'_i$ for $i=1,2,3$. Then the fiber sum of $X$ and $X'$ along $F$ and $F'$, denoted by $X \natural X'$, admits a $(G; K_1, K_2, K_3)$-trisection where $G=g+g'+2b-1$ and $K_i=k_i + k'_i + c_i$.
\end{prop}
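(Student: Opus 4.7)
The plan is to realize $X \natural X'$ as the pasting of the two surface complements $X - \eta(F)$ and $X' - \eta(F')$, each carrying the $\star$-trisection induced by the bridge trisection as in Section \ref{section_surface_complement}, and then to apply Theorem \ref{pasting_lemma_star}. Applied to $F \subset X$ this yields $\wt \tau = \wt W_1 \cup \wt W_2 \cup \wt W_3$ on $X - \eta(F)$, with trisection surface $\wt\Sigma = \Sigma \setminus \eta(t)$ of genus $g$ and $2b$ boundary components (where $t = F \cap \Sigma$), and four-dimensional pieces $\wt W_i = W_i \setminus \eta(\mathcal{D}_i)$ that are $1$-handlebodies of genus $k_i + c_i$: indeed, removing a trivially embedded $2$-disk from a $4$-dimensional $1$-handlebody adds exactly one $1$-handle, because the complement of an unknotted disk in $B^4$ is $S^1 \times B^3$. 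The analogous construction on $X' - \eta(F')$ yields $\wt\tau'$ with pieces of genus $k'_i + c_i$ and trisection surface $\wt\Sigma'$ of genus $g'$ with $2b$ boundary components.

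The fiber sum is defined as $X \natural X' = (X - \eta(F)) \cup_\phi (X' - \eta(F'))$ for an orientation-reversing diffeomorphism $\phi$ between the two boundary circle bundles. Because $F \cdot F = F' \cdot F'$, the bundles have matching Euler numbers and such a $\phi$ exists at the bundle level; because the induced cell decompositions of $F$ and $F'$ agree ($b = b'$, $c_i = c'_i$), one may choose $\phi$ to send each $\wt W_i \cap \partial(X - \eta(F))$ onto $\wt W'_i \cap \partial(X' - \eta(F'))$ and to respect the induced Heegaard splittings. In the $\star$-trisection of a surface complement, the pairwise-intersection compression bodies have negative boundary consisting of planar surfaces (punctured bridge spheres for the tangle, by Remark \ref{remark_diagrams_standard_piece}), so every component has non-empty boundary in $\partial\wt\Sigma$; Theorem \ref{pasting_lemma_star} therefore produces a $\star$-trisection $\hat\tau = \hat W_1 \cup \hat W_2 \cup \hat W_3$ of $X \natural X'$ with $\hat W_i = \wt W_i \cup_\phi \wt W'_i$ and trisection surface $\hat\Sigma = \wt\Sigma \cup_\phi \wt\Sigma'$. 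Since $\phi$ identifies all $2b$ boundary circles of $\wt\Sigma$ with those of $\wt\Sigma'$, the new surface is closed and $\hat\tau$ is a classical trisection.

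To verify the numerics, an Euler characteristic calculation gives $\chi(\hat\Sigma) = (2 - 2g - 2b) + (2 - 2g' - 2b)$, hence $G = g + g' + 2b - 1$. For $K_i$, note that $\wt W_i \cap \partial(X - \eta(F))$ is a disjoint union of $c_i$ meridional solid tori $D^2 \times S^1$, one around each disk of $\mathcal{D}_i$, whose $S^1$-cores generate $c_i$ of the $k_i + c_i$ free generators of $\pi_1(\wt W_i)$. The map $\phi$ identifies these with the analogous generators of $\pi_1(\wt W'_i)$, so a Mayer--Vietoris computation yields $H_1(\hat W_i; \Z)$ of rank $(k_i + c_i) + (k'_i + c_i) - c_i = k_i + k'_i + c_i$; since $\hat\tau$ is classical, $\hat W_i$ is a $1$-handlebody, necessarily of genus $K_i = k_i + k'_i + c_i$. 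The main obstacle is step two: one must produce a bundle map $\phi$ that not only matches Euler numbers but also faithfully transports the $\star$-trisection structure on one boundary to that of the other, which should reduce to combining the matching cell decompositions on $F$ and $F'$ with the uniqueness of trivial disks (Lemma \ref{lem_unique_trivial_disks}) and standard lifting arguments for bundle automorphisms.
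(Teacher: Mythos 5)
Your outline matches the paper's argument closely: trisect the complements $X-\eta(F)$ and $X'-\eta(F')$ via their bridge positions, check the boundary hypotheses of Theorem \ref{pasting_lemma_star} (the relevant surfaces are planar, hence have boundary), and glue. The genus count for the new surface coincides with the paper's. The one substantive departure is your derivation of $K_i$: where the paper dissects the proof of Lemma \ref{pasting_lemma_star} and the $\natural$-decomposition from Remark \ref{remark_diagrams_standard_piece}, you attempt a Mayer--Vietoris argument.

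That Mayer--Vietoris step has a gap. You correctly identify $\wt W_i\cap\wt W'_i$ as $c_i$ disjoint solid tori and correctly note that the induced map on $H_1$ is injective, but the expression $(k_i+c_i)+(k'_i+c_i)-c_i$ for $\operatorname{rank}H_1(\hat W_i)$ tacitly assumes the intersection is \emph{connected}. For $c_i>1$ the group $\tilde H_0(\wt W_i\cap\wt W'_i)\cong\Z^{\,c_i-1}$ feeds into the sequence through the connecting homomorphism, and the full computation gives $\operatorname{rank}H_1(\hat W_i)=k_i+k'_i+2c_i-1$. An Euler-characteristic check supports this: the cell structure a bridge trisection puts on $F$ has $2b$ vertices, $3b$ edges, and $\sum c_i$ faces, so $\chi(F)=-b+\sum c_i$; combining $\chi(X\natural X')=\chi(X)+\chi(X')-2\chi(F)$ with $G=g+g'+2b-1$ and $\chi=2+G-\sum_i K_i$ forces $\sum_i K_i=\sum_i\bigl(k_i+k'_i+2c_i-1\bigr)$, which agrees with the stated $K_i=k_i+k'_i+c_i$ only when every $c_i=1$. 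So your Mayer--Vietoris as written does not justify the number it reports, and you should either restrict to $c_i=1$ or repair the sequence and re-examine the constant. (The same discrepancy appears to be present in the paper's own derivation, where the $c_i-1$ cycles created in the boundary-connect-sum graph by pairing the planar pieces of $\wt W_i$ with those of $\wt W'_i$ seem to go unaccounted for.)
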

\begin{proof}
The condition $F\cdot F = F'\cdot F'$ is so that the gluing map $f: \partial \eta F \ra \partial \eta F'$ exists. 
To trisect the fiber sum of $X$ and $X'$ one must trisect the complements $X-\eta(F)$ and $X'-\eta(F')$ using the bridge trisections. Since $b=b'$ and $c_i=c'_i$ for $i=1,2,3$, the handle decompositions in $\partial \eta (F)$ and $\partial \eta (F')$ induced by the $\star$-trisections agree. Thus we can apply Lemma \ref{pasting_lemma_star} and obtain a trisection for $X\natural X'$. 
The $\star$-trisection surface for $X-\eta(F)$ is connected of genus $g$ and $2b$ boundary components; similarly for $X'-\eta(F')$. By Remark \ref{remark_diagramatics_pasting_lemma}, the trisection surface for $X\natural X'$ is a closed surface of genus $G=g+g'+ 2b-1$. 

To determine the value of each $K_i$ we need to dive into the proof of Lemma \ref{pasting_lemma_star}. Fix $i\in \{1,2,3\}$, the 4-manifold piece $W_i-\eta(F)$ is determined by the tuple $(\Sigma; \Delta^1, \Delta^0, \Delta_{all})$ as in Remark \ref{remark_diagrams_standard_piece}; similarly for $W'_i-\eta(F')$. By definition of bridge trisection, $F\cap \partial W_i$ is a $c_i$ component unlink in bridge position. 
By construction, $\Delta_{all}$ contains separating curves breaking $\Sigma$ as the connected sum of a genus $g$ closed surface and a $c_i$ spheres, each of them with positive even number of boundary components. Each sphere corresponds to a component of the unlink $F\cap \partial W_i$. 
The curves $(\Delta^0, \Delta^1)$ in the genus $g$ surface determine a 1-handlebody with $k_i$ 1-handles. For each planar surface, the curves $(\Delta^0,\Delta^1)$ are only of type 2 which determine 4-dimensional pieces that get glued with other 4-dimensional pieces coming from $W'_i-\eta(F')$. After gluing each planar surface determines a copy of $S^1\times B^3$ (see paragraph 4 of the proof of Lemma \ref{pasting_lemma_star}).
Thus, as explained in paragraph 3 of the proof of Lemma \ref{pasting_lemma_star}, the glued 4-manifold piece $W_i\cup W'_i$ is diffeomorphic to $\natural_{K_i} S^1\times B^3$ where $K_i=k_i+k'_i + c_i$. 
\end{proof}
\subsection{Cacime Surface}\label{section_cacime}
Let $F_2$, $F_3$ be oriented surfaces of genus two and three, respectively. Define $\tau_i:F_i\ra F_i$ to be involutions as in Figure \ref{fig_involutions}. Define the Cacime Surface to be the quotient $C=F_2\times F_3/\tau_2\times \tau_3$. 
Following Chapter 4.2 of \cite{Akbulut_notes}, $C$ is diffeomorphic to a fiber sum of $F_2$ bundles over $T^2$ \[C=\left(F_2\times T^2\right) \natural \left( F_2\times S^1\times [0,1]\right)/\wt \tau_2,\]
here $\wt \tau_2(x,t) = (\tau_2(x),t)$ is a diffeomorphism of $F_2\times S^1$.
\begin{figure}[h]
\centering
\includegraphics[scale=.3]{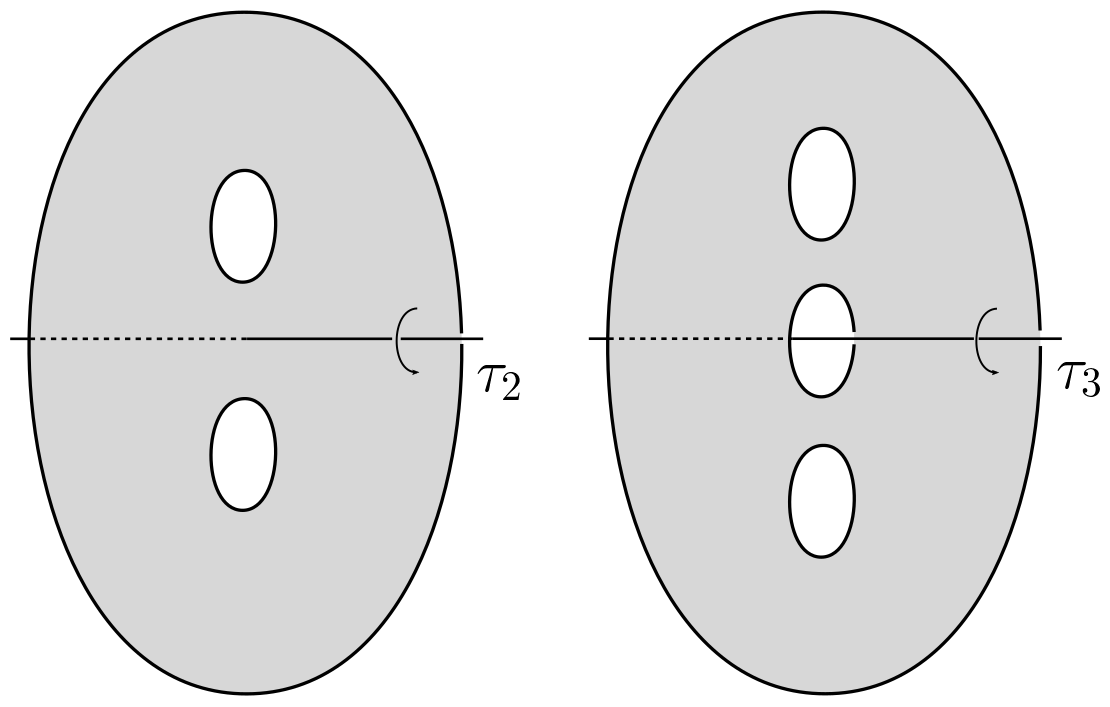}
\caption{The involutions $\tau_2$ an $\tau_3$.}
\label{fig_involutions}
\end{figure}
To obtain a trisection diagram for $C$ we follow the following three steps drawn in figures \ref{Cacime_1_fig}, \ref{Cacime_2_fig}, \ref{Cacime_3_fig}, \ref{Cacime_4_fig}, \ref{Cacime_5_fig} and \ref{Cacime_6_fig}
\begin{enumerate}
\item Draw a genus 5 Heegaard splitting for $F_2\times S^1$ and perform Koenig's algorithm \cite{Dale_thesis} to draw a $(21;6,6,11)$-trisection diagram for $X_f=(F_2\times S^1)\times[0,1]/f$ for $f\in\{id,\wt \tau_2\}$.
\item Notice that $F_2\times\{pt\}$ can be seen in the Heegaard splitting for $F_2\times S^1$ and use this to draw a system of shadows of a $(5;1,1,1)$-bridge trisection for $F_2\times\{pt\}\times\{pt\}$.
\item Apply the method in Section \ref{section_surface_complement} to $\star$-trisect the complements $X_f-\eta(F_2)$ and tube the corresponding boundaries using the Pasting Lemma to obtain a $(51;13,13,23)$-trisection diagram for $C$ (See Proposition \ref{prop_fiber_sum}).
\end{enumerate} 
Koenig showed in \cite{Dale_thesis} that his algorithm always results in a trisection which can be destabilized. Here, we can destabilize ten times (five on each $X_f$) and get a $(41;13)$-trisection for the Cacime Surface. This trisection will yield a handle decomposition with one 0-handle, 13 1-handles, 28 2-handles, 13 3-handles and one 4-handles. This picture resembles the handle diagram in Figure 4.17 of \cite{Akbulut_notes}. Thus we think of the Pasting Lemma as the trisection analog of the Roping Method for handle decompositions. 
\begin{question} 
Is there an interpretation for the phrase ``upside-down trisection"? If so, is there a different method of gluing two $\star$-trisected 4-manifolds besides the one in Lemma \ref{pasting_lemma_star}? 
\end{question}
\begin{figure}[h!]
\centering
\includegraphics[scale=.55]{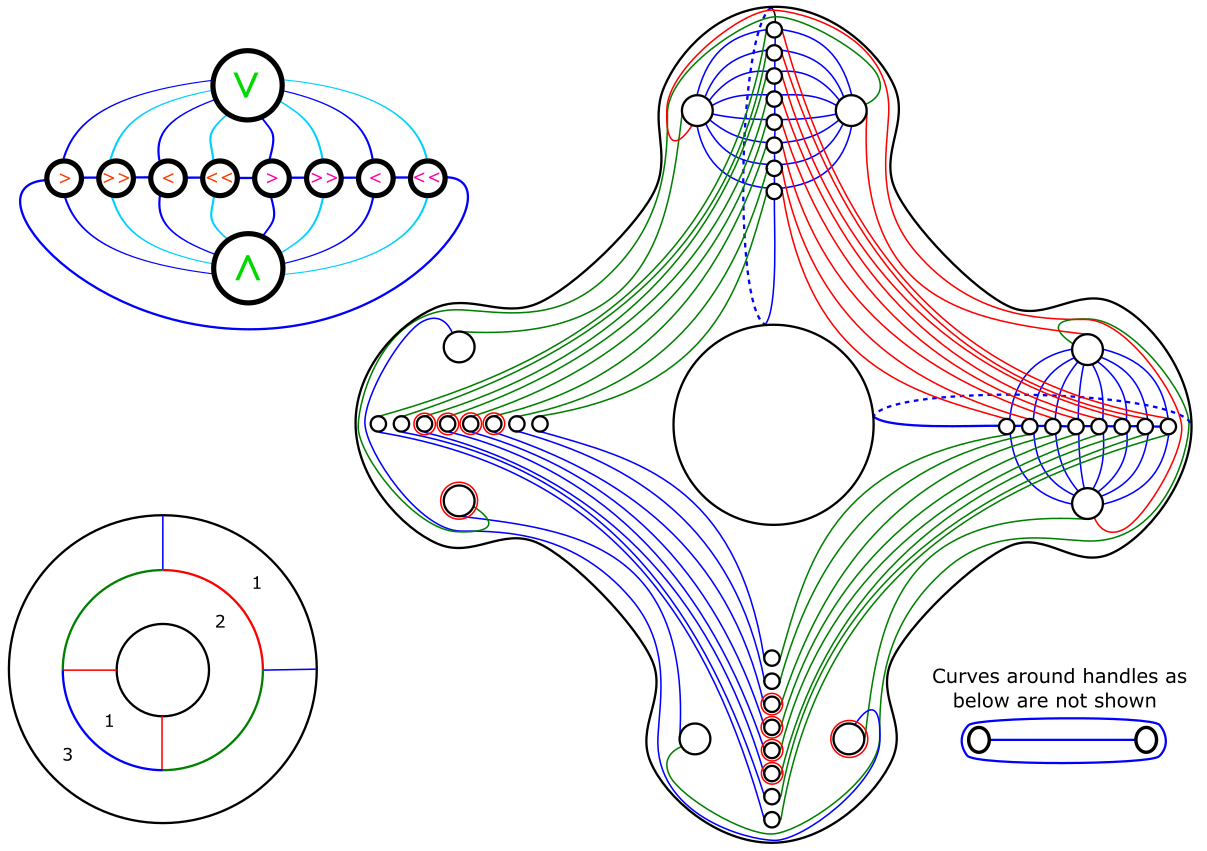}
\caption{Genus five Heegaard splitting for $F_2\times S^1$ and a genus 21 trisection diagram for $X_{id}=F_2\times T^2$. Both diagrams are drawn in punctured surfaces with the correct identifications on the boundaries. The bottom left annulus is a diagrammatic representation of the trisection for $X_{id}$. The colored arcs in the core of the annulus correspond to thickened punctured Heegaard surfaces, and the rest of the arcs are copies of the 3-dimensional handlebodies of the original Heegaard splitting. For more detailes see \cite{Dale_thesis}.}
\label{Cacime_1_fig}
\end{figure}

\begin{figure}[h!]
\centering
\includegraphics[scale=.55]{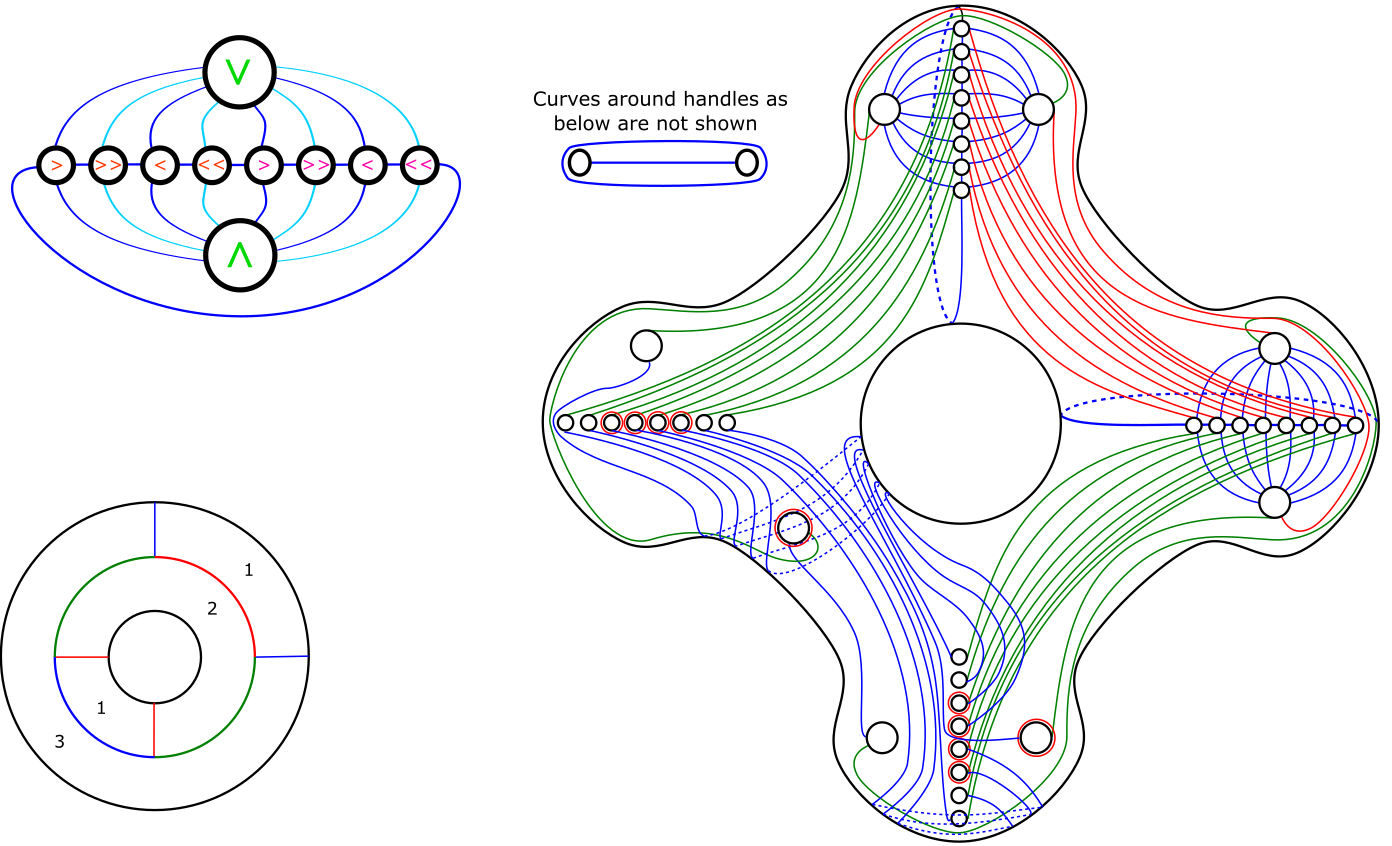}
\caption{A genus 21 trisection diagram for $X_{\tau_2}$. Notice the twist on some of the blue arcs is decoding the action of $\tau_2$ in the Heegaard splitting of $F_2\times S^1$.}
\label{Cacime_2_fig}
\end{figure}

\begin{figure}[h!]
\centering
\includegraphics[scale=.05]{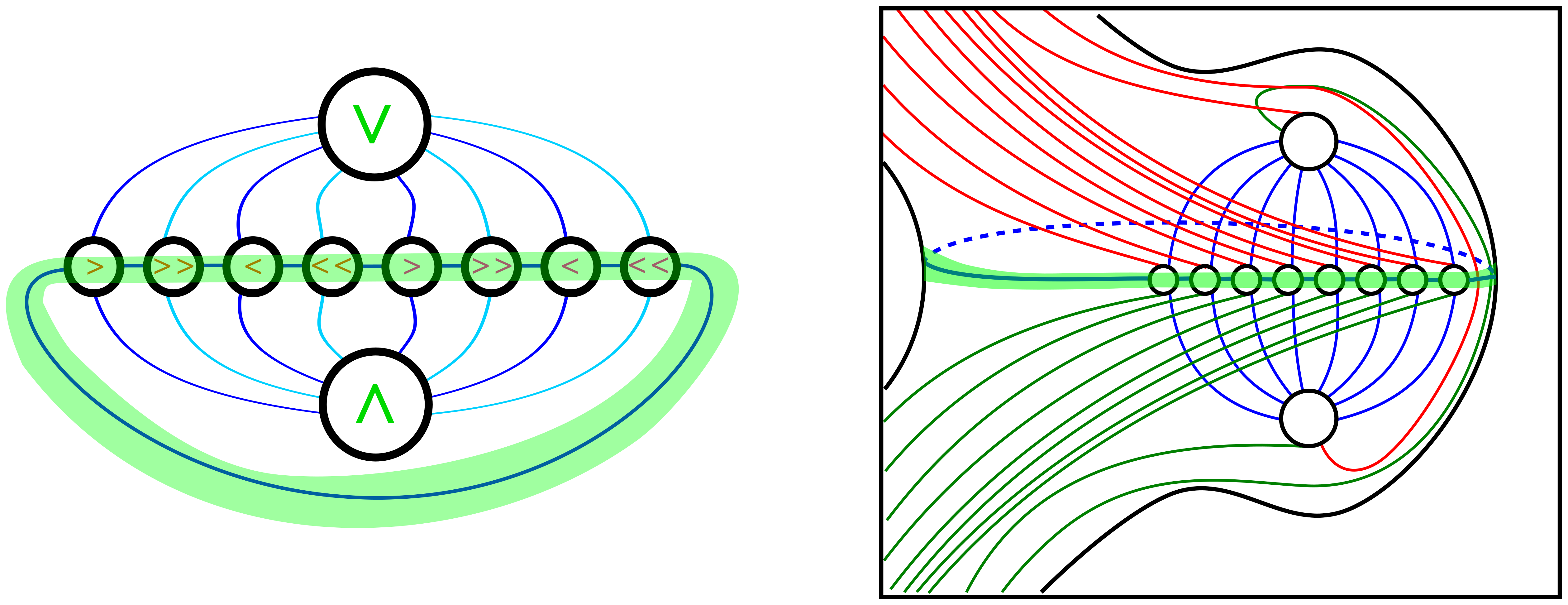}
\caption{The green shaded area corresponds to a copy of $F_2\times\{pt\}$ inside the Heegaard splitting for $F_2\times S^1$ (left) and $F_2\times \{pt\}\times\{pt\}$ in $X_f$.}
\label{Cacime_3_fig}
\end{figure}

\begin{figure}[h!]
\centering
\includegraphics[scale=.06]{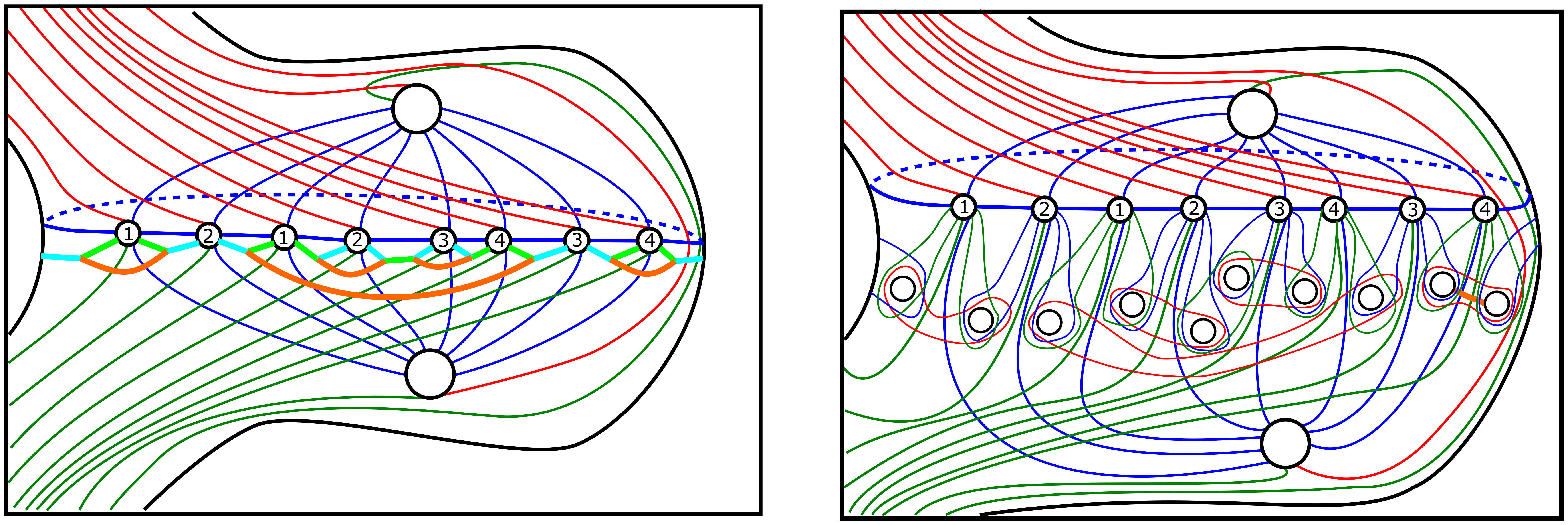}
\caption{Local models for the bridge trisection of $F_2\times\{pt\}\times\{pt\}$ in $X_f$ (left), and the $\star$-trisection diagram for its complement (right).}
\label{Cacime_4_fig}
\end{figure}

\begin{figure}[h!]
\centering
\includegraphics[scale=.05]{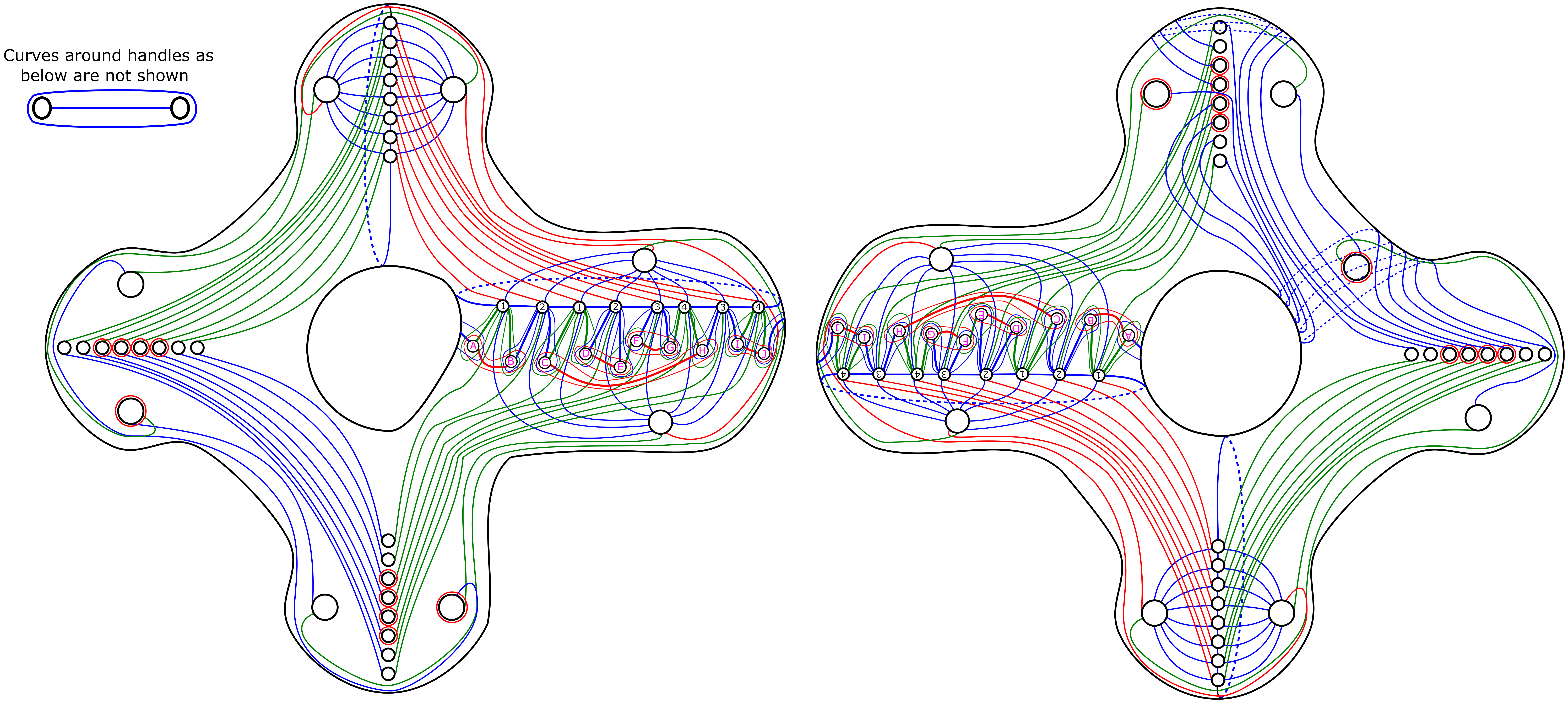}
\caption{Genus 51 trisection diagram of the Cacime surface.}
\label{Cacime_5_fig}
\end{figure}

\begin{figure}[h!]
\centering
\includegraphics[scale=.2]{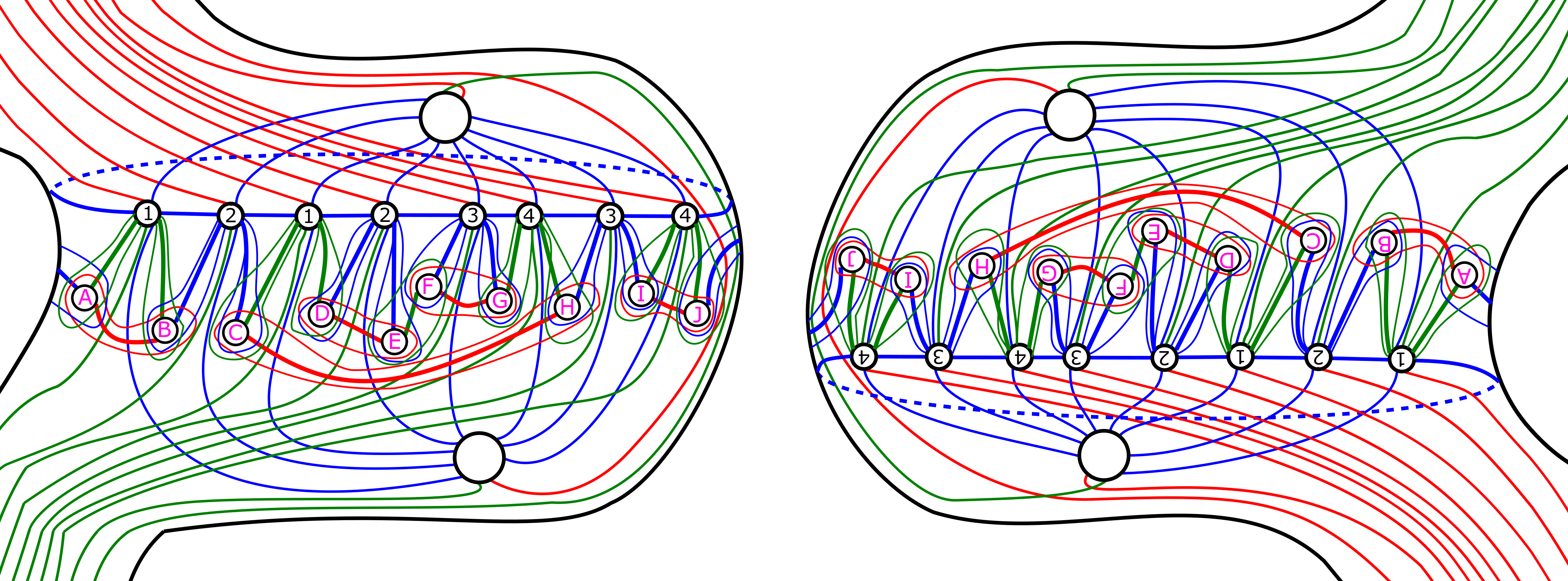}
\caption{A close-up of the above trisection. The disks with common labels are identified as shown. Notice that some curves can be erased as in Figure \ref{fig_knot_surgery_5}.}
\label{Cacime_6_fig}
\end{figure}

\subsection{Knot Surgery} \label{section_knot_surgery}
Let $K$ be a knot in $S^3$, and let $m$ denote a meridian of $K$. Let $M_K$ be the 3-manifold obtained by 0-surgery along $K$. Notice that $m$ can be viewed as a circle in $M_K$ and that the torus $T_m=m\times S^1\subset M_K\times S^1$ has self-intersection zero. Let $X$ be a 4-manifold containing an embedded torus $T$ with self-intersection zero. Denote by $X_K$ the fiber sum 
\[ X_K = X\natural_{T=T_m} (M_K\times S^1).\] 
Here, we glue the complement of the corresponding thickened tori along a diffeomorphism preserving $\{pt\}\times \partial D^2$. 
Fintushel and Stern introduced the knot surgery operation in \cite{FS_knot_surgery} to build exotic copies of smooth 4-manifolds by controlling the change of the Seiberg-Witten invariants using the Alexander polynomial of $K$. 

The goal of this subsection is to describe how to draw trisection diagrams for $X_K$. Figures \ref{fig_knot_surgery_1}, \ref{fig_knot_surgery_2}, \ref{fig_knot_surgery_3}, \ref{fig_knot_surgery_4} and \ref{fig_knot_surgery_5} show the steps taking $K$ to be the trefoil knot.

Let $K$ be a knot in $S^3$. Find a Heegaard splitting for $S^3$ such that $K$ can be isotoped to be inside one of the handlebodies intersecting one meridian disk exactly in one point. 
In order to do this one can consider a tunnel system for $K$ as in Figure \ref{fig_knot_surgery_1}. 
Project $K$ onto the Heegaard surface $F$ in such a way that is an embedded circle in $F$ and the framing induced by the surface is the 0-framing on $K$. 
By construction, we can find a Heegaard diagram $(F;a,b)$ such that an isotopic copy of $m$ belongs to $a$ and $K$ is disjoint from all other elements of $a$. 
The pink circle in Figure \ref{fig_knot_surgery_1} are possible choices for the meridian $m$ as subsets of $F$.
A Heegaard diagram for $M_K$ is given by $(F;a',b)$ where $a'=(a-m)\cup K$. Furthermore, the loop $m$ as a subset of $F$ corresponds to the meridian of $K$ inside $M_K$ as in Figure \ref{fig_knot_surgery_1}.

Now perform Koenig's algorithm \cite{Dale_thesis} to draw a trisection diagram $(\Sigma;\alpha,\beta,\gamma)$ for $M_K\times S^1$ using $(F;a',b)$. Our choice of $m$ as a subset of $F$ allows us to see a bridge position for $T_m=m\times S^1$. To see this recall that $\Sigma$ is obtained by four copies of $F$ tubed as in Figure \ref{fig_knot_surgery_2}. Draw $m$ on each copy of $\Sigma$ and pick four distinct points on each circle. Then push-off $T_m$ away from $F\times S^1$ fixing the 16 selected points. This procedure gives us the bridge trisection of $T_m$ with $8$ bridges as in Figure \ref{fig_knot_surgery_2}. 

Now let $T$ be a torus with self-intersection zero embedded in a 4-manifold $X$. Suppose $T$ is in bridge position with respect to some trisection of $X$. 
There are two approaches we can take in order to draw a trisection diagram for $X_K$. 

\textbf{The first approach to trisect $X_K$} is to perturb\footnote{See \cite{bridge_trisections_4M}.} both bridge trisections for $T_m$ and $T$ until the new bridge trisections induce the same cell decomposition on both $T_m$ and $T$. Then, to draw a trisection for $X_K$ we have to draw the $\star$-trisection diagrams for the corresponding surface complements following Section \ref{section_surface_complement} and tube them using the Pasting Lemma as we did for the Cacime Surface. 

\textbf{The second approach to trisect $X_K$} is to glue a copy of $T^3\times [0,1]$ in such a way that the new boundary has a nice $S^1$-fibration with fiber a copy of the surface $T_m\times \{pt\}$ (similarly for $T$). In order to do this, draw the cell decomposition induced by the bridge trisection on the torus $T_m$ (see Figure \ref{fig_knot_surgery_3}). This picture can be thought as a bridge trisection for $T_m\times \{0\}$ inside $T_m\times D^2$. 
Thus we can draw a $\star$-trisection diagram for $T_m\times S^1\times[0,1]$ with one boundary having the same handle decomposition as the $\star$-trisection in Figure \ref{fig_knot_surgery_2} and other boundary a $S^1$-fibration with fiber $T_m\times\{pt\}$. This new trisection is drawn in Figure \ref{fig_knot_surgery_3}. 
Now tube this new $\star$-trisection with the trisection for the complement of $T_m$ in $M_K\times S^1$ to obtain a classical relative trisection (with empty binding) with a copy of $T_m$ as the fiber on its boundary, as desired. Notice the appearance of sphere components in the compressed surfaces $\Sigma_{\alpha}$, $\Sigma_{\beta}$ and $\Sigma_{\gamma}$, thus some curves are redundant (see Figure \ref{fig_knot_surgery_4}). The final trisection diagram is depicted in Figure \ref{fig_knot_surgery_5}. After performing a similar process to the bridge trisection of $T$ in $X$ one, in theory, can perform Pasting Lemma one last time to draw a trisection for $X_K$. 

\textbf{The advantage of the second method} is that any diffeomorphism of the form $f\times id_{S^1}:T_m\times \partial D^2\ra T\times \partial D^2$ can be used to perform the fiber sum.

\begin{figure}[h]
\centering
\includegraphics[scale=.1]{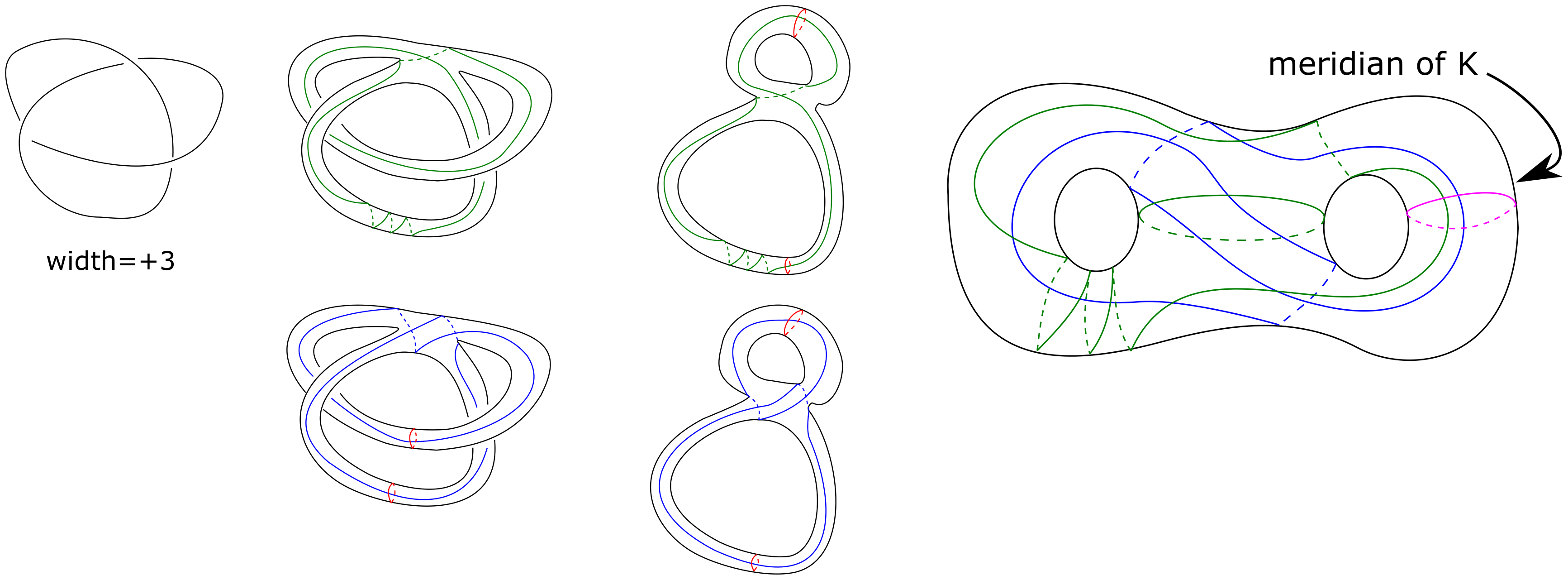}
\caption{Heegaard diagram for $M_K$ with $K$ a trefoil knot.}
\label{fig_knot_surgery_1}
\end{figure}
\begin{figure}[h]
\centering
\includegraphics[scale=.04]{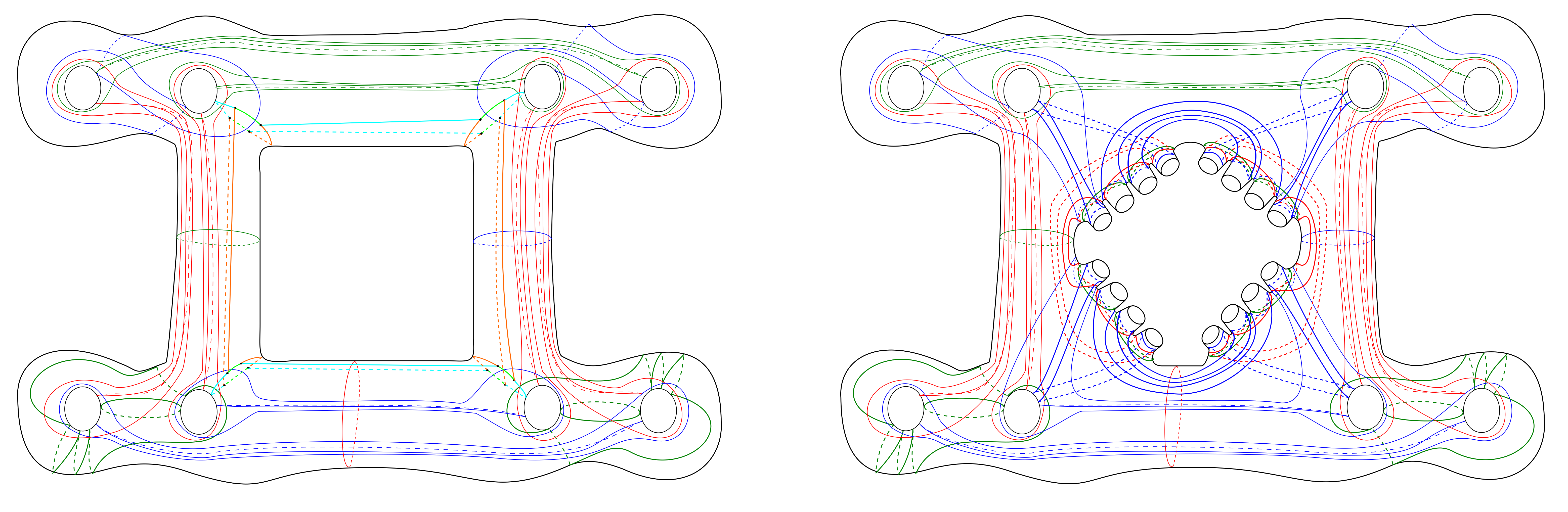}
\caption{Bridge trisection diagram for $T_m\subset M_K\times S^1$. After an isotopy, we draw a $\star$-trisection diagram for the complement $(M_K\times S^1)-(T_m\times D^2)$.}
\label{fig_knot_surgery_2}
\end{figure}
\begin{figure}[h]
\centering
\includegraphics[scale=.075]{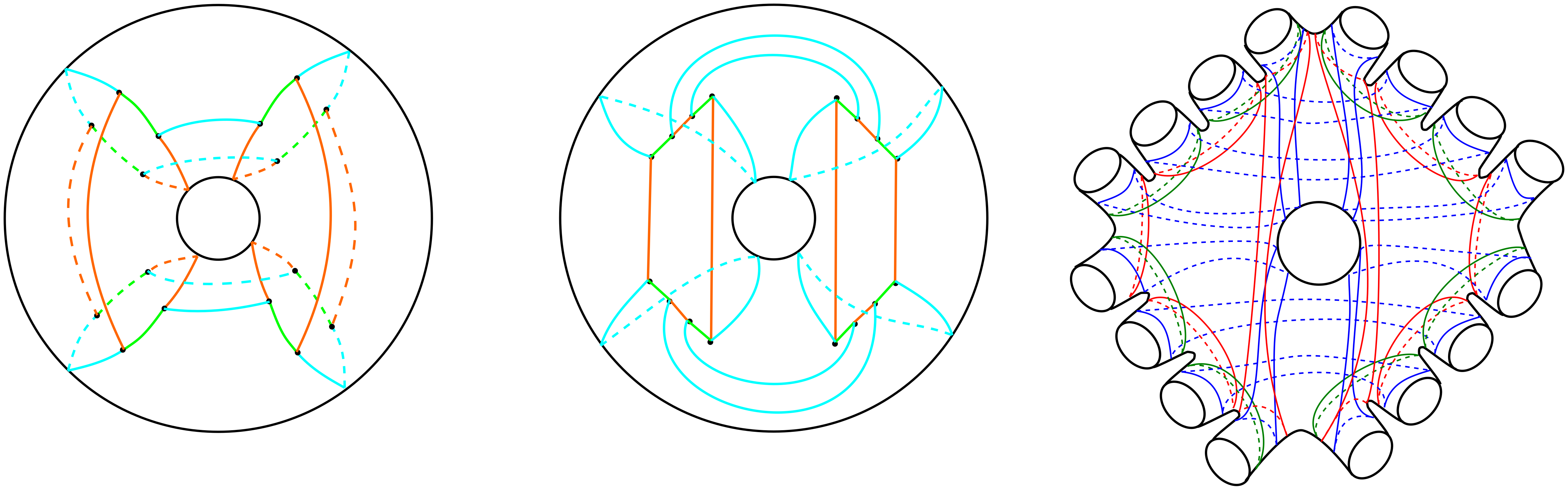}
\caption{(left and middle) The cell decomposition induced by the bridge trisection on $T_m$, notice that also describes a bridge trisection for $T_m\times\{0\}\subset T_m\times D^2$. (right) The associated $\star$-trisection for the complement of this bridge trisected surface.}
\label{fig_knot_surgery_3}
\end{figure}
\begin{figure}[h]
\centering
\includegraphics[scale=.04]{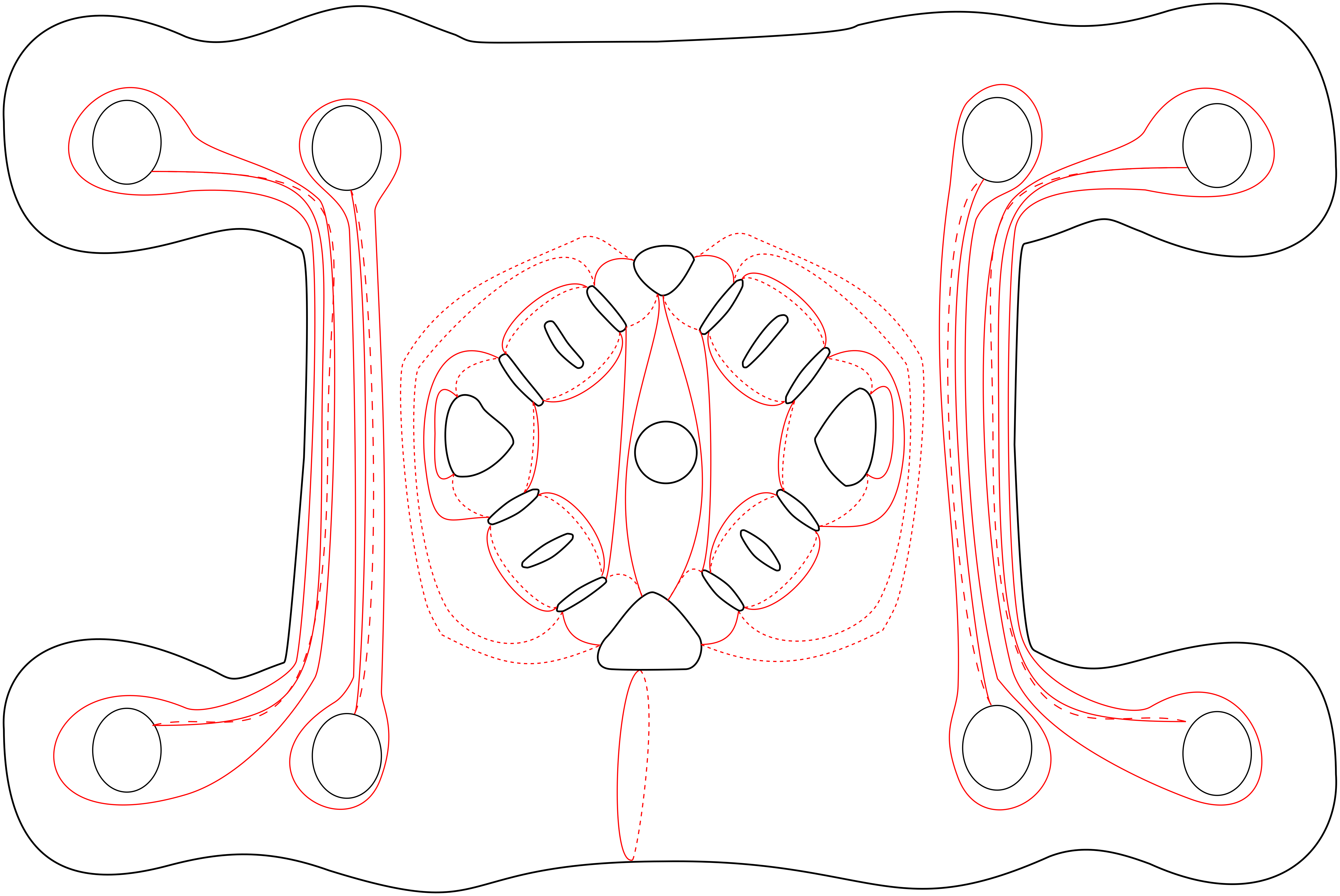}
\caption{Red loops after performing the pasting lemma. Notice that they are some redundancies.}
\label{fig_knot_surgery_4}
\end{figure}
\begin{figure}[h]
\centering
\includegraphics[scale=.075]{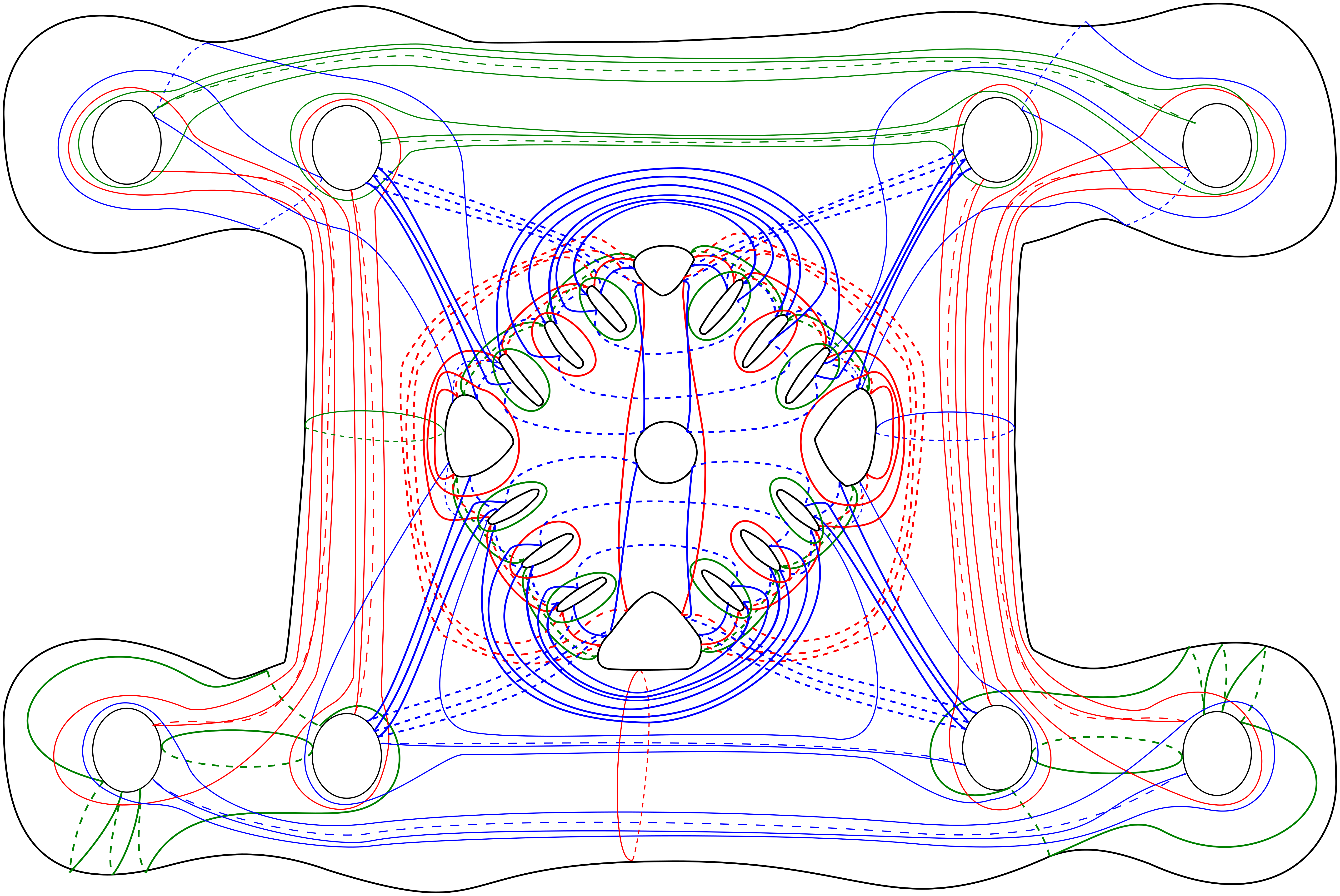}
\caption{A trisection diagram for the complement of $T_m\times S^1$ inside $M_K\times S^1$ with boundary admitting a $S^1$-foliation with fiber $T_m\times\{pt\}$. To perform knot surgery, we must attach this diagram to the complement of a torus in $X$.}
\label{fig_knot_surgery_5}
\end{figure}


\subsection{Torus Surgery} \label{section_torus_surgery}
Let $F$ be an embedded torus with trivial tubular neighborhood in a $\star$-trisected 4-manifold $X$. The goal of this subsection is to describe how to draw trisection diagrams for all the 4-manifolds $X_{F,g}:=\left(X-\eta(F)\right)\cup_g (T^2\times D^2)$ where $g:\partial \eta(F) \ra T^3$ is an isotopy class of homeomorphism of $T^3$. In principle, there are $SL_3(\mathbb{Z})$ many such maps. 

Suppose $F$ is in $(b;c_1,c_2,c_3)$-bridge position with respect to the given trisection of $X$. 
Begin by performing the construction of a $\star$-trisection for the complement of $F$ in $X$ as in Section \ref{section_surface_complement}. 
The handle decomposition on the boundary of the new $\star$-trisection for $X-\eta(F)$ is rather complicated. 
To overcome this problem, we can paste our diagram for $X-\eta(F)$ with a special diagram for $T^3\times [0,1]$. 
The bridge trisection on $F$ induces a cell decomposition on the 2-dimensional torus. Given any such cell decomposition, we can draw a bridge position for $T^2\times\{0\}$ in $T^2\times D^2$ inducing the same cell decomposition. We can then draw a $\star$-trisection diagram for $T^3\times[0,1]$ with one boundary having the same handle decomposition as the trisection for $X-\eta(F)$. We will refer to such $\star$-trisections by $\tau_0$. 
Figures \ref{fig_surface_complement} and \ref{fig_knot_surgery_3} show examples of the $\tau_0$ $\star$-trisections for specific cell decompositions of the 2-torus. 

Gluing $\tau_0$ to the $\star$-trisection of $X-\eta(F)$ maintains the diffeomorphism type of the complement fixed, and replaces the restrictive handle decomposition in the boundary with the $S^1$-foliation $F\times S^1$ with fiber an isotopic copy of $F\times\{pt\}$. We can now apply Pasting Lemma to this new trisection of $X-\eta(F)$ using diffeomorphisms of the form $g=f\times id_{S^1}$ for some homeomorphism $f:F\ra F$. 
We think of this kind of gluing $g$ as acting on the standard basis for $H_1(T^3,\mathbb{Z})$ via the matrix $\left( \begin{smallmatrix}a&b&0\\c&d&0\\0&0&1\end{smallmatrix}\right)$.

Denote by $\tau_{23}$ the relative trisection diagram of Example \ref{examp_cob}. This diagram yields a relative trisection for $T^3\times [0,1]$ with closed trisection surface such that the $S^1$-foliation induced in $\partial (T^3\times[0,1])$ has page $S^1\times S^1\times\{pt\}$ in $T^3\times\{0\}$ and page $S^1\times \{pt\}\times S^1$ in $T^3\times\{1\}$. We think of $\tau_{23}$ as acting on the standard basis for $H_1(T^3,\mathbb{Z})$ via a permutation matrix $\sigma_{23}=\left( \begin{smallmatrix}1&0&0\\0&0&-1\\0&1&0\end{smallmatrix}\right)$. By modifying the labelings in $\tau_{23}$, we can draw trisections diagrams $\tau_{31}$ and $\tau_{12}$ of $T^3\times [0,1]$ corresponding to 2-cycles $(3,1)$ and $(1,2)$ respectively. 

Any matrix in $SL_3(\mathbb{Z})$ can be written as a product of permutation matrices ($\tau_{ij}$) and matrices of the form $\left( \begin{smallmatrix}1&1&0\\0&1&0\\0&0&1\end{smallmatrix}\right)$ (special kind $f\times id_{S^1}$). 
Hece, equipped with the trisections $\tau_0$ and $\tau_{ij}$ we can, in theory, draw $\star$-trisection diagrams for all the 4-manifolds $X_{F,g}$. 
The procedure of performing torus surgery becomes then a linear algebra problem. We will explain this explicitly by showing methods to trisect Logarithmic transforms and Luttinger Transforms. 

\begin{examp}[Logarithmic Transform]
Let $F$ be an embedded torus in $X$ with self-intersection number zero. Fix a basis for $H_1(F,\mathbb{Z})$ and a trivialization $\eta(F)\cong T^2\times D^2$. Following \cite{round_handles}, given a matrix $A=\big(\begin{smallmatrix}a_{11}&a_{12}\\a_{21}&a_{22}\end{smallmatrix}\big)\in SL_2(Z)$, we will denote as $A$-Logarithmic transform the 4-manifold $X_{F,A}=(X-\eta(F))\cup_{g}(T^2\times D^2)$, where $g:T^2\times D^2\ra T^2\times D^2$ is a homeomorphism given by the matrix $\Big(\begin{smallmatrix}1&0&0\\0&a_{11}&a_{12}\\0&a_{21}&a_{22}\end{smallmatrix}\Big)$.
Suppose that $F$ is in bridge position with respect to some trisection $\tau$ of $X$. Notice that 
\[ A =\left(\begin{matrix}0&0&1\\0&1&0\\1&0&0\end{matrix}\right)\cdot \left(\begin{matrix}a_{22}&a_{21}&0\\a_{12}&a_{11}&0\\0&0&1\end{matrix}\right) \cdot \left(\begin{matrix}0&0&1\\0&1&0\\1&0&0\end{matrix}\right).\]
Thus to perform $A$-logarithmic transform along $F$ we can take the $\star$-trisection $\wt \tau$ for the complement of $F$ in $X$, and concatenate it with specific trisections as follows,
\[X_{F,A}\quad:\quad \wt \tau \bigcup_{id} \tau_0 \bigcup_{id} \tau_{31} \bigcup_{A\times id_{S^1}} \tau_{31} \bigcup_{id} \tau_{\emptyset}.\] 
Here $\tau_{\emptyset}$ is the relative trisection for $T^2\times D^2$ given by the empty diagram on a closed torus.

Integral logarithmic transform is given by the matrix $A_p=\big(\begin{smallmatrix}0&1\\-1&p\end{smallmatrix}\big)$. In particular $0$-logarithmic transform is given by the map gluing map corresponding with the permutation $(2,3)$ and so the trisection diagram for the $0$-logarithmic transform $X_{F,0}$ can be simplified as follows: 
\[X_{F,0}\quad:\quad \wt \tau \bigcup_{id} \tau_0 \bigcup_{id} \tau_{23} \bigcup_{id} \tau_{\emptyset}.\] 
\end{examp}

\begin{examp}[Luttinger Surgery]
For an embedded torus $F$ in $X$ with self-intersection zero, a Luttinger surgery is an operation $X \mapsto X_{m,n}$ where $X_{m,n}$ is torus surgery along $F$ via a homeomorphism given by the matrix $A_{m,n}=\left(\begin{smallmatrix}1&0&m\\0&1&n\\0&0&1\end{smallmatrix}\right)$. Since $A_{m,n}$ factors as follows, 
\[ A =\left(\begin{matrix}1&0&0\\0&0&1\\0&1&0\end{matrix}\right)\cdot \left(\begin{matrix}1&m&0\\0&1&0\\0&0&1\end{matrix}\right) \cdot \left(\begin{matrix}0&0&1\\0&1&0\\1&0&0\end{matrix}\right)\cdot \left(\begin{matrix}1&n&0\\0&1&0\\0&0&1\end{matrix}\right)\cdot \left(\begin{matrix}0&0&1\\0&1&0\\1&0&0\end{matrix}\right) \cdot \left(\begin{matrix}1&0&0\\0&0&1\\0&1&0\end{matrix}\right).\]
We can preform Luttinger surgery along $F$, after fixing a trivialization $\eta(F)\cong T^2\times D^2$ and a basis for $H_1(F,\mathbb{Z})$, by concatenating the following trisections. 
\[ X_{F,m,n} \quad : \quad \wt \tau \bigcup_{id} \tau_0 \bigcup_{id} \tau_{23} \bigcup_{ \left(\begin{smallmatrix}1&m\\0&1\end{smallmatrix}\right) \times id} \tau_{13} \bigcup_{\left(\begin{smallmatrix}1&n\\0&1\end{smallmatrix}\right) \times id}\tau_{13} \bigcup_{id} \tau_{23} \bigcup_{id} \tau_{\emptyset}.\]
Here, $\wt \tau$ is the $\star$-trisection for the complement of $F$ and $\tau_{\emptyset}$ is the empty trisection diagram for $T^2\times D^2$. 
\end{examp}

\appendix
\section{Classic Diagrams}
\label{section_classic_diagrams}
Let $X$ be a $\star$-trisected 4-manifold with non-empty boundary. If the trisection is a classic relative trisection, the compression bodies given by the pairwise intersection satisfy  $C_i^0=C_i^1$ for all $i=1,2,3$. In particular, any diagram $(\Sigma; \alpha, \beta, \gamma)$ of such trisections satisfies that each pair of loops is slide equivalent to the loops like in Figure \ref{standard_rel_picture}.
In this case, there is a open book decomposition on $\partial X$ induced by the trisection with binding having $b=|\partial \Sigma|$ components. In the existing literature algorithms have only been developed when the trisection surface has boundary ($b>0$). The goal of this appendix is to extend these results to the case $b=0$. 
\begin{figure}[h]
\centering
\includegraphics[scale=.1]{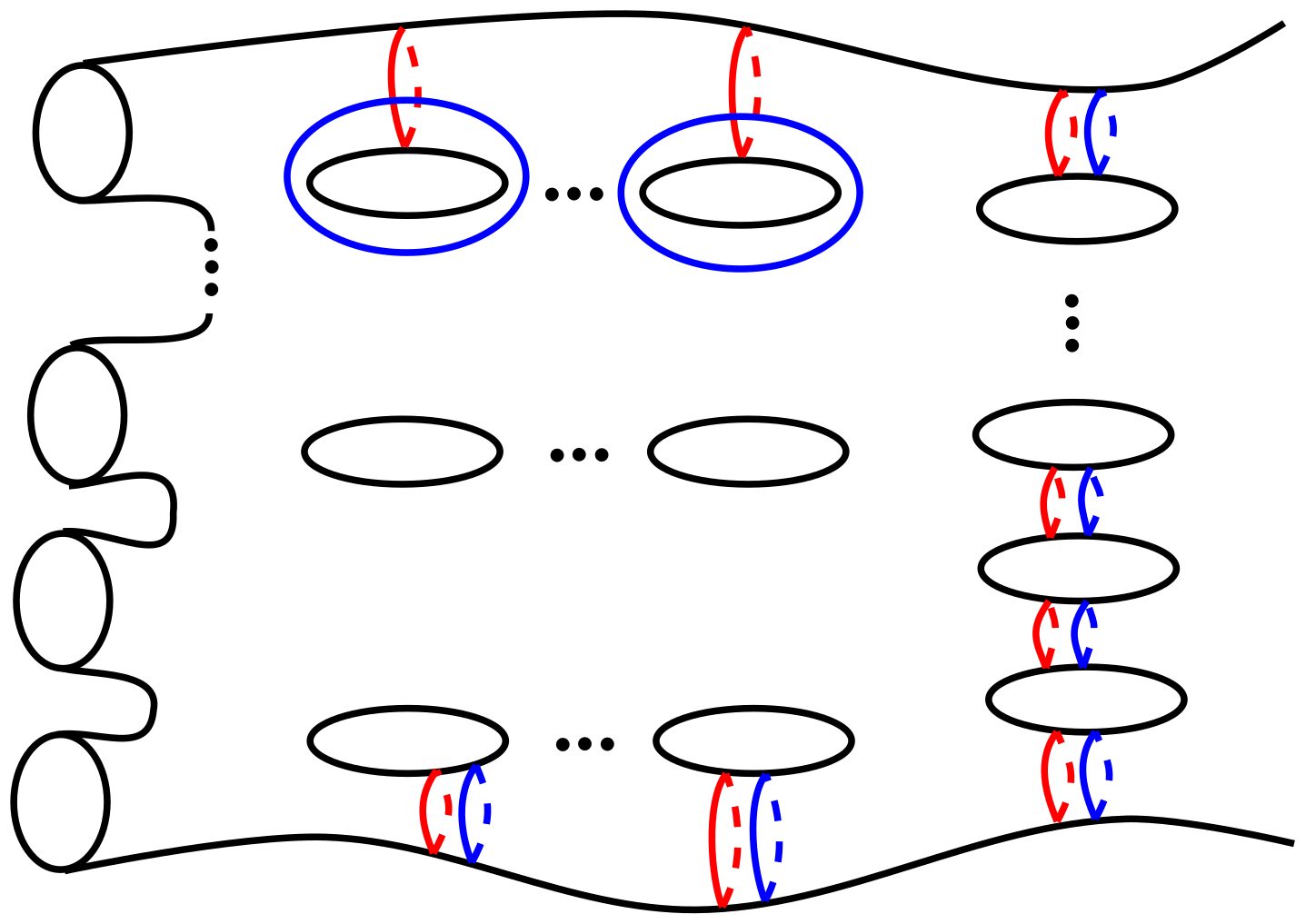}
\caption{The standard picture for ``classic" relative trisection diagrams.}
\label{standard_rel_picture}
\end{figure}

\subsection{Relative Trisections from Kirby Diagrams} \label{section_rel_trisections_from_Kirby}

Let $X$ be a connected 4-manifold with connected boundary. In \cite{relative_trisections_2} the authors showed how to draw a trisection diagram from a Kirby diagram of $X$ if a page $P$ for an open book decomposition of $\partial X$ is given in the Kirby diagram. As expected, the proper modification of this result holds if $P$ is the page of a fibration of $\partial X$ over $S^1$; i.e., if $\partial P=\emptyset$.  
We will state the result in full generality. 

\begin{thm} [Adaptation from Main Theorem of \cite{relative_trisections_2}]
Take a handle decomposition of $X$ with one 0-handle, some 1-handles, 2-handles and 3-handles described explicitly in the form of a Kirby diagram. Let $P$ be the page of an open book decomposition \textbf{or} a fibration over $S^1$ of $\partial X$. 
Suppose that $P$ is explicitly drawn in the Kirby diagram. Then there is an algorithm to draw a trisection diagram for $X$ described as follows: 
\begin{enumerate} 

\item Isotope $P$ in the diagram so that $P$ has a 2-dimensional handle decomposition induced by the 0-handle and some 1-handles \textbf{and 2-handles} of $X$. You might need to add 1/2-cancelling pairs to do so. 
\item If not all the 1-handles of $X$ were used to build $P$, add genus to $P$ by tubing it as in Figure \ref{tubing}. Call this new surface $\wt{\Sigma}$. 
\begin{figure}[h]
\centering
\includegraphics[scale=.05]{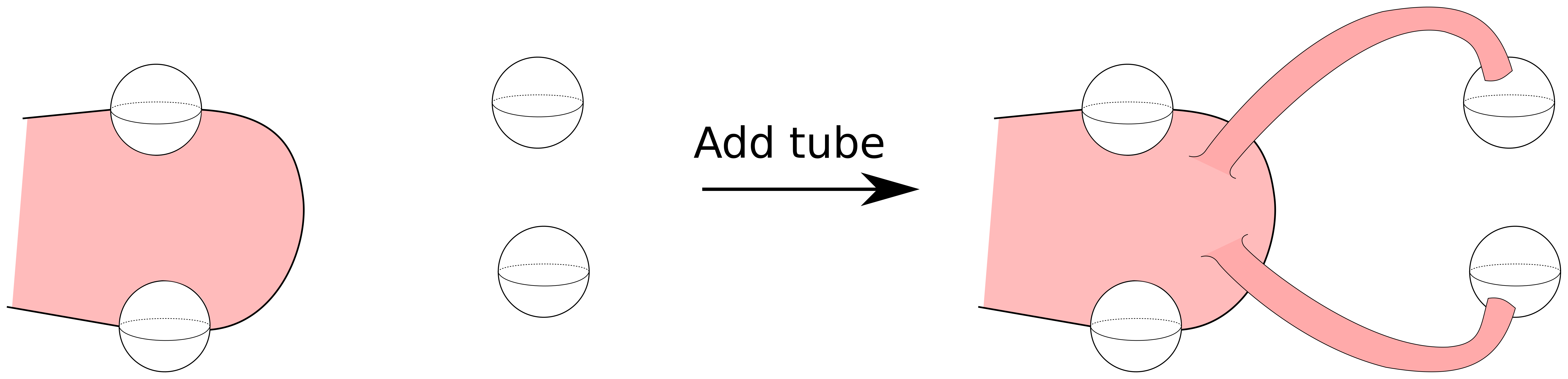}
\caption{Tubing the page.}
\label{tubing}
\end{figure}
\item Project the attaching regions of the rest of the 2-handles onto $\wt\Sigma$. With the help of Reidemeister I moves ensure that the framing of the handles is given by the surface; and use Reidemeister II moves to ensure that every loop has at least one overcrossing in the link projection. 
\item Stabilize $\wt \Sigma$ so that the link above has no crossing following Figure \ref{fix_crossing}. Call this new surface $\Sigma$. Let $\gamma$ be the loops in $\Sigma$ arising from the link projection, let $\alpha$ and $\beta$ be the red and blue curves in $\Sigma$ coming from the stabilizations.
\begin{figure}[h]
\centering
\includegraphics[scale=.05]{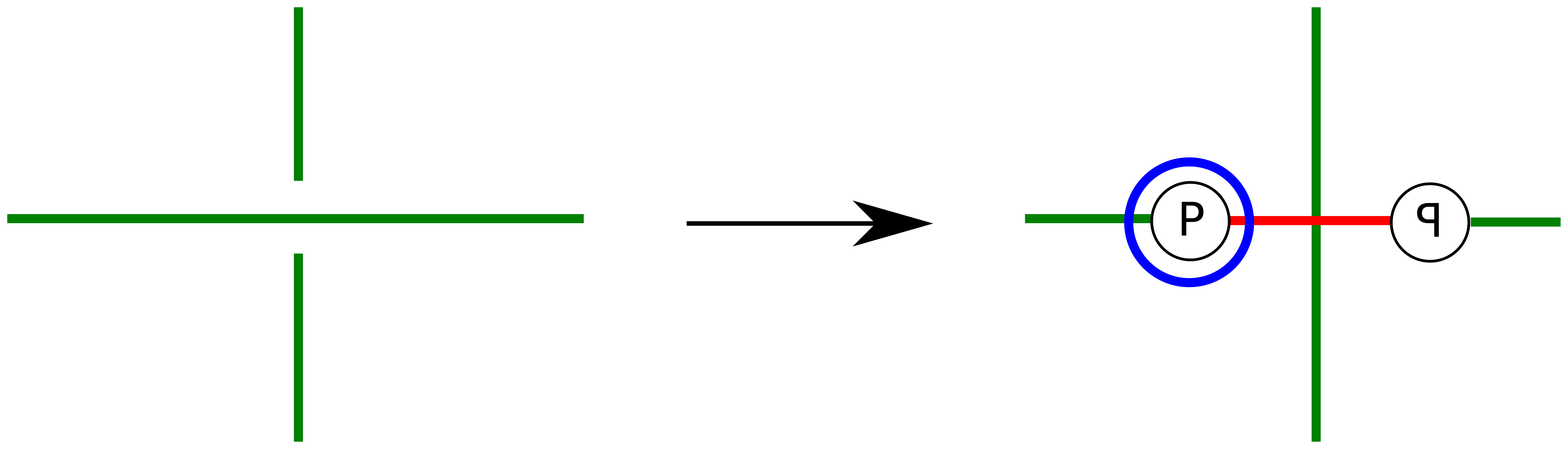}
\caption{How to fix a crossing.}
\label{fix_crossing}
\end{figure}
\item By construction $|\alpha|=|\beta|\geq |\gamma|$. If the inequality is strict, we do the following: 
For each component $\gamma_i$, by construction we can pick a loop $\beta_{J_i}$ intersecting $\gamma_i$ transversely in one point and disjoint from other $\gamma$ curves. Take a $\beta_j$ not in the selected set $\{ \beta_{J_i}\}_i$; $\beta_j$ intersects a unique $\gamma$ curve in one point, say $\gamma_{i_0}$. Slide $\beta_j$ over $\beta_{J_{i_0}}$ using an arc of $\gamma_{i_0}$; denote the resulting curve by $\gamma_j$. 
\end{enumerate} 
The tuple $(\Sigma;\alpha, \beta, \gamma)$ is a relative trisection diagram for $X$ inducing the given fibration on the boundary. 
\end{thm}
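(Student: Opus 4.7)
The plan is to closely follow the proof of the main theorem of \cite{relative_trisections_2}, paying attention to the new flexibility (and new rigidity) introduced when the page $P$ is closed rather than bounded. The essential adaptation is that in Step 1 we are allowed (and in fact required, when $\partial P=\emptyset$) to include $2$-handles of $X$ in the construction of $P$: a closed surface cannot be built from $0$- and $1$-handles alone, so any realization of $P$ inside the $4$-dimensional Kirby diagram necessarily uses at least $g(P)$ of the attached $2$-handles to cap off sphere boundary components. After Step 1, the handles used to build $P$ --- namely the $0$-handle, some $1$-handles, and some $2$-handles --- form a regular neighborhood $N(P)\subset X$ with $P\subset \partial N(P)$, and the remaining handles of $X$ lie on the other side of $N(P)$. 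The three pieces of the trisection will then be recovered as: (i) the thickened handlebody built from the $0$- and $1$-handles of $X$, (ii) the piece coming from the remaining $2$-handles and $3$-handles, and (iii) a collar on the boundary fibered by copies of $P$ arising from the given fibration.

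\textbf{Producing the three curve systems.} Once $\wt\Sigma$ is obtained by tubing the $1$-handles not already used in building $P$ (Step 2), it bounds a $3$-dimensional handlebody $H_\alpha$ containing the entire $0$- and $1$-skeleton of $X$ together with the $2$-handles chosen to build $P$. The remaining $2$-handles of $X$ are then attached to $\wt\Sigma$ along knots whose projections, after the Reidemeister adjustments of Step 3 and the stabilizations of Step 4, become the curves $\gamma$ on the enlarged surface $\Sigma$. Each stabilization, as in Figure \ref{fix_crossing}, adds a dual pair of $\alpha,\beta$ curves precisely arranged so that the projected link becomes a disjoint union of embedded loops with surface-induced framing; this is the analogue of the ``standard stabilization'' step of \cite{relative_trisections_2} and is what forces $|\alpha|=|\beta|\geq |\gamma|$ with each $\gamma_i$ transverse to exactly one $\beta_{J_i}$ and disjoint from the remaining $\beta$'s. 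Step 5 then manufactures, by handle slides of excess $\beta$-curves over the dual $\beta_{J_{i_0}}$ along arcs of $\gamma_{i_0}$, the ``duplicate'' $\gamma$-curves parallel to existing $\gamma$'s needed so that the final pair $(\beta,\gamma)$ is handle-slide equivalent to the model pair of Figure \ref{standard_rel_picture}. At this point the triple $(\Sigma;\alpha,\beta,\gamma)$ satisfies the combinatorial requirements of a classic relative trisection diagram.

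\textbf{Main obstacle: matching the boundary fibration.} The hardest part of the adaptation is ensuring that the trisection built from $(\Sigma;\alpha,\beta,\gamma)$ actually induces the given fibration (not merely \emph{some} fibration) on $\partial X$. Once the diagram is shown to determine a relative trisection of a $4$-manifold diffeomorphic to $X$, one must verify that the closed page $P$ persists correctly through Steps 2--5. This is done by tracking $P$ as a properly embedded subsurface of $\partial H_\alpha=\wt\Sigma=\Sigma$ that survives each tubing and stabilization unchanged up to isotopy; concretely, $P$ is recovered from $\Sigma$ by compressing along the $\alpha$-curves arising in Steps 2 and 4, which are dual to the $1$-handles and $2$-handles of $X$ \emph{not} used to build $P$. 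Because the handle slides of Step 5 are performed in the complement of these $\alpha$-curves, the page $P$ is preserved and the monodromy algorithm of Appendix \ref{section_classic_diagrams} applied to the output of our algorithm returns the monodromy of the given fibration. This completes the verification.
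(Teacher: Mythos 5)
Your overall outline correctly identifies the key new ingredient of the adaptation: when the page $P$ is closed, some $2$-handles of $X$ must participate in building $P$, since a closed surface cannot be constructed from $0$- and $1$-handles alone. But your description of the resulting piece decomposition drops this exactly where it matters. In the paper's proof the first piece is $X_1 = B^4[h^1 \cup h^2_P]$: the $0$-handle, \emph{all} the $1$-handles, \emph{and} the $2$-handles $h^2_P$ that were used to build $P$. You instead describe piece (i) as ``the thickened handlebody built from the $0$- and $1$-handles of $X$,'' which omits $h^2_P$. Without those $2$-handles, $\partial\bigl(B^4[h^1]\bigr) \cong \#^k(S^1\times S^2)$ cannot carry a circular handle decomposition with closed page $P$ of positive genus, so the argument does not go through; the $h^2_P$ are precisely what create the nonempty negative boundary of the $\alpha$- and $\beta$-compression bodies. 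Related to this, ``thickened handlebody'' is the wrong type of object: when $P$ is closed of positive genus, $X_1$ is a thickened compression body with negative boundary $P$, and the pairwise intersections $C_\alpha$, $C_\beta$ (what you call $H_\alpha$) are compression bodies, not handlebodies. Your sentence describing $H_\alpha$ as a $3$-dimensional handlebody ``containing the entire $0$- and $1$-skeleton of $X$ together with the $2$-handles chosen to build $P$'' also mixes dimensions --- those are $4$-dimensional handles that live in $X_1$, not in a $3$-manifold.

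Your description of the third piece as ``a collar on the boundary fibered by copies of $P$'' likewise does not match the paper, which simply sets $X_2 = \eta_X(C_\beta)[h^2_r]$ and $X_3 = X - \mathrm{int}(X_1 \cup X_2)$, as in the relative case. The paper's proof is deliberately short: it defines $X_1, X_2, X_3$ as above, embeds $\Sigma$ into $\partial X_1$ by stabilizing the circular decomposition of Subsection \ref{subsection_boundary_Z}, and then defers the remaining verification (including that the induced boundary fibration has page $P$) entirely to Theorem 1 of \cite{relative_trisections_2}. Your extended discussion of tracking $P$ through Steps 2--5 and then invoking the monodromy algorithm is a plausible sanity check but is not part of what is needed once the pieces are defined correctly; as written, the incorrect definition of $X_1$ is the real gap.
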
 

\begin{proof} 
The decomposition of $X$ will be given as follows: 
Divide the 2-handles of $X$ by $h^2=h^2_P\cup h^2_r$ where $h^2_P$ are the ones used to build $P$ and $h^2_r$ the rest of the 2-handles. Define $X_1=B^4[h^1\cup h^2_P]$. We can see $\Sigma$ as embedded in $\partial X_1$ by stabilizing the standard circular decomposition in $\partial X_1$ as in Subsection \ref{subsection_boundary_Z}, say 
\[\partial X_1= \bigslant{\Big( C_\alpha \cup_{\Sigma} C_\beta\Big)}{\Big(\partial_-C_\alpha =_{id} \partial_-C_\beta\Big)}.\]
Define $X_2 = \eta_X(C_\beta)[h^2_r]$ and $X_3 = X-int(X_1\cup X_2)$. 
The proof that $X=X_1\cup X_2\cup X_3$ is indeed a relative trisection is the same as in Theorem 1 of \cite{relative_trisections_2}. 
\end{proof} 

\begin{examp}
Figures \ref{Kirby_part_1}, \ref{Kirby_part_2} and \ref{Kirby_part_3} describe how to draw a relative trisection diagram for the complement of an unknotted torus in $S^4$.
\end{examp} 
\begin{figure}[h]
\centering
\includegraphics[scale=.045]{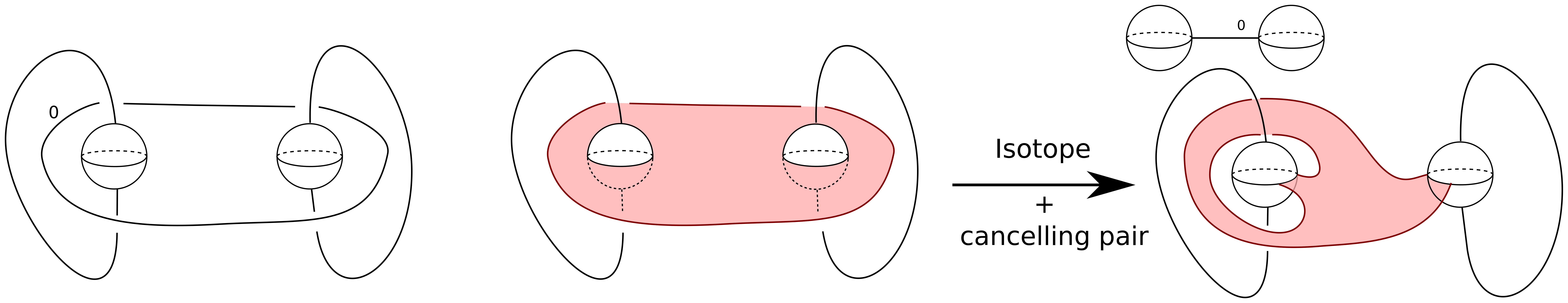}
\caption{Left: A Kirby diagram for the complement of the unknotted torus in $S^4$. Right: The shaded surface is an embedding of a torus page for the fibration of $T^3$.}
\label{Kirby_part_1}
\end{figure}
\begin{figure}[h]
\centering
\includegraphics[scale=.05]{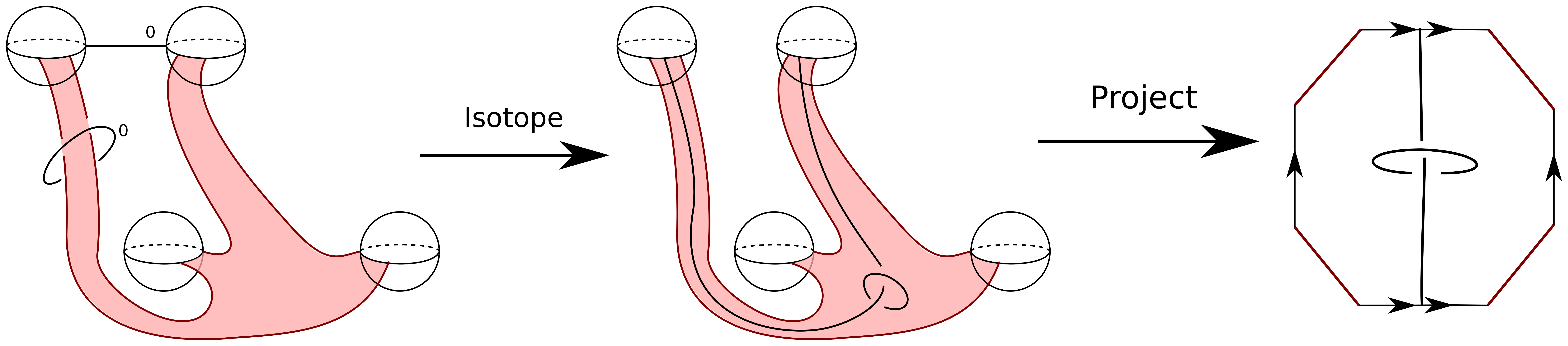}
\caption{After sliding, notice that the page has a handle decomposition induced by the 0-handle, the pair of 1-handles and one 2-handle of the 4-manifold.}
\label{Kirby_part_2}
\end{figure}

\begin{figure}[h]
\centering
\includegraphics[scale=.05]{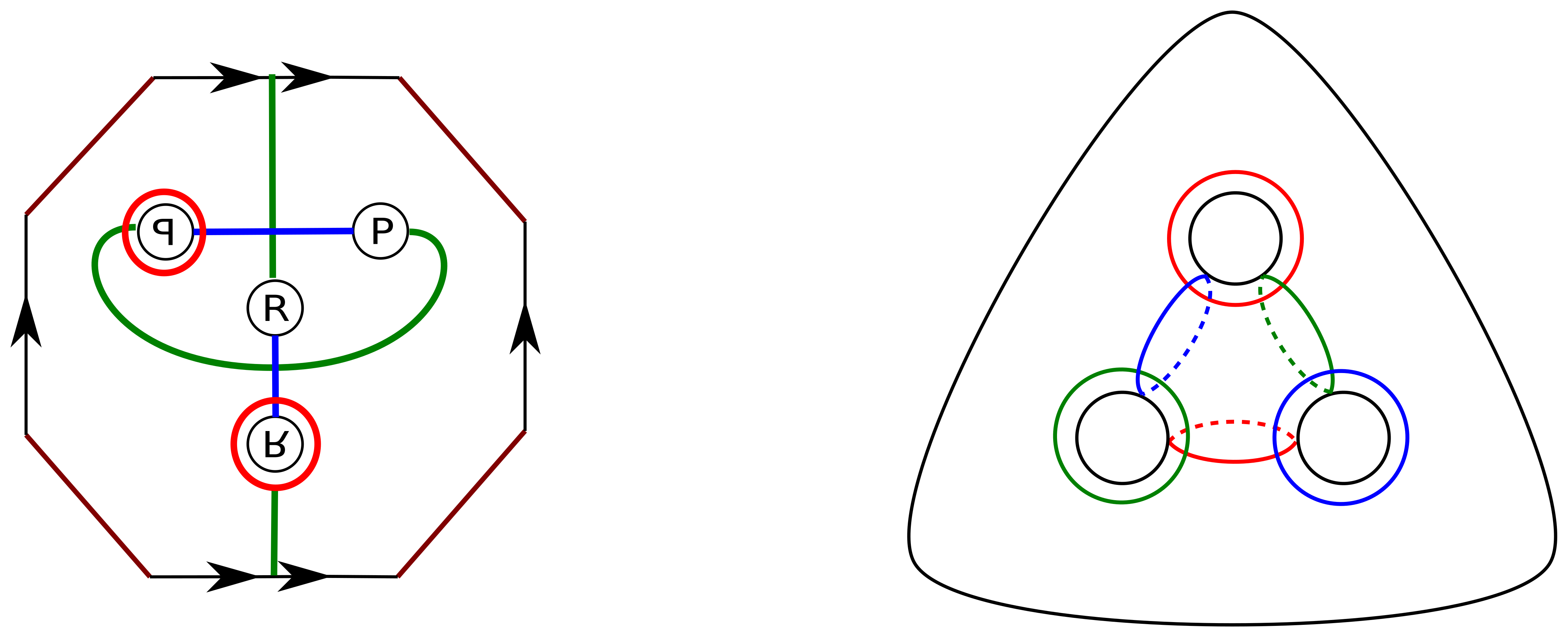}
\caption{By resolving the crossings as in Step 4, since we have the same number of loops of each color, we obtain a relative trisection diagram for $X$ (left). We get the diagram in the right by a diffeomorphism of the surface.}
\label{Kirby_part_3}
\end{figure}
\begin{examp}\label{examp_cob}
Figures \ref{fig_cob_1} and \ref{fig_cob_2} 
show how to trisect the thickened 3-torus $T^3\times[0,1]$ in such a way that on one side the $S^1$-foliation has fiber $S^1\times S^1\times\{pt\}$ and in the other boundary the fiber is $S^1\times \{pt\}\times S^1$. 
\end{examp}

\begin{figure}[h]
\centering
\includegraphics[scale=.14]{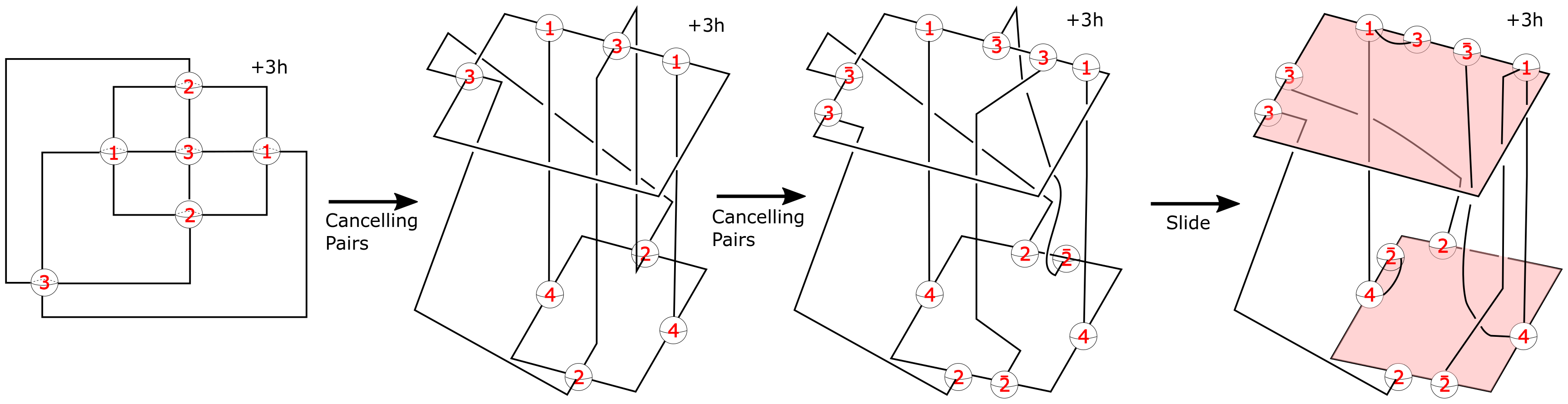}
\caption{A Kirby diagram for $T^3\times [0,1]$ obtained by thickening a Heegaard diagram for the 3-torus. After adding a 1/2-cancelling pair, you can see two embedded tori (shaded in pink) corresponding to $S^1$-fibers $T_{12}=S^1\times S^1\times\{pt\}\times \{0\}$ and $T_{13}=S^1\times\{pt\}\times S^1\times\{1\}$. Notice that the pages $T_{12}$ and $T_{13}$ have handle decompositions induced by the 0-handle, the 1-handles and some 2-handle of the 4-manifold.}
\label{fig_cob_1}
\end{figure}

\begin{figure}[h]
\centering
\includegraphics[scale=.17]{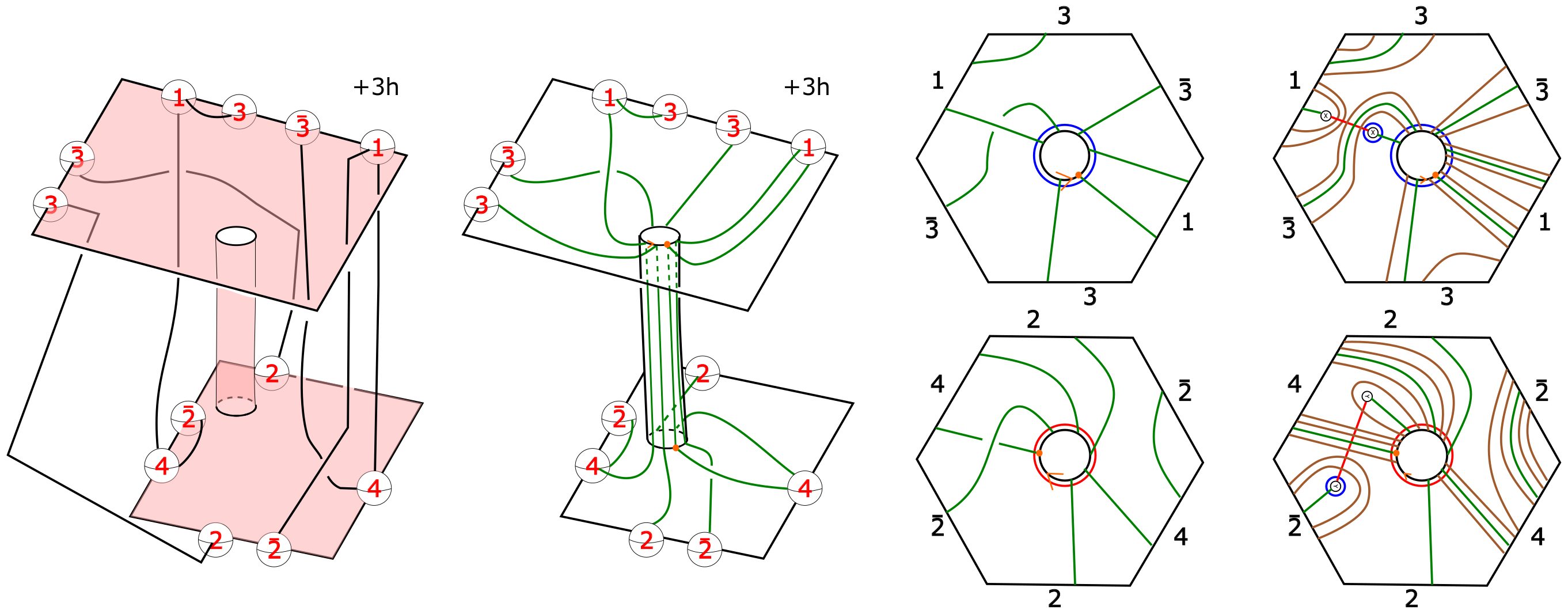}
\caption{After tubing the two tori, we draw a link diagram for the rest of the 2-handles in a genus two surface. By resolving the crossings as in Step 4, we obtain a diagram with fewer $\gamma$-curves (green). We perform Step 5 in order to find $\gamma_3$ (brown). This final result is a relative trisection diagram for $T^3\times[0,1]$ with $S^1$-fibers on its boundary given by $T_{12}=S^1\times S^1\times\{pt\}\times \{0\}$ and $T_{13}=S^1\times\{pt\}\times S^1\times\{1\}$.}
\label{fig_cob_2}
\end{figure}

\subsection{The Monodromy Induced on $\partial X$} \label{section_monodromy}
In \cite{relative_trisections}, the authors described an algorithm to compute the monodromy of an open book decomposition induced by a trisection when the diagram has boundary ($b>0$). If the trisection surface is closed, the trisection will induce a fibration over $S^1$ and the monodromy can also be computed following a suitable modification of the original algorithm. We now describe the algorithm in general. The key idea is to take properly embedded 1-manifolds in the trisection surface that cut a page into a disk and traverse the boundary of the trisection using the correct handle slides. 

\begin{thm}[Adaptation from of Theorem 5 of \cite{relative_trisections}]
A relative trisection diagram encodes an open book decomposition or a fibration over $S^1$ on $\partial X$ with page given by $\Sigma_\alpha$, the surface resulting from $\Sigma$ by compressing along the $\alpha$ curves, and monodromy $\mu:\Sigma_\alpha \ra \Sigma_\alpha$ determined as follows: 
\begin{enumerate} 
\item Choose an ordered collection of properly embedded arcs or\footnote{We could have both arcs and curves simultaneously.} simple closed curves $a$ on $\Sigma$, disjoint from $\alpha$ and such that the corresponding 1-manifolds in $\Sigma_\alpha$ cut $\Sigma_\alpha$ into a disk\footnote{As many disks as boundary components of $X$.}. 

\item There exists a collection of properly embedded 1-manifolds $a_1$ and simple closed curves $\beta'$ in $\Sigma$ such that $(\alpha,a_1)$ is handle slide equivalent to $(\alpha,a)$, $\beta'$ is handle slide equivalent to $\beta$, and $a_1$ and $\beta'$ are disjoint. We claim that in this step we do not need to slide $\alpha$ curves over $\alpha$ curves, only $a$ 1-manifolds over $\alpha$ curves and $\beta$ curves over $\beta$ curves. Choose such an $a_1$ and $\beta'$. 

\item There exists a collection of properly embedded 1-manifolds $a_2$ and simple closed curves $\gamma'$ in $\Sigma$ such that $(\beta',a_2)$ is handle slide equivalent to $(\beta',a_1)$, $\gamma'$ is handle slide equivalent to $\gamma$, and $a_2$ and $\gamma'$ are disjoint. Again we claim that we do not need to slide $\beta'$ curves over $\beta'$ curves. Choose such an $a_2$ and $\gamma'$. 

\item There exists a collection of properly embedded 1-manifolds $a_3$ and simple closed curves $\alpha'$ in $\Sigma$ such that $(\gamma',a_3)$ is handle slide equivalent to $(\gamma',a_2)$, $\alpha'$ is handle slide equivalent to $\alpha$, and $a_3$ and $\alpha'$ are disjoint. Again we claim that we do not need to slide $\gamma'$ curves over $\gamma'$ curves. Choose such an $a_3$ and $\alpha'$. 

\item The pair $(\alpha',a_3)$ is handle slide equivalent to $(\alpha,a_*)$ for some collection of 1-manifolds. Choose such an $a_*$. Note that now $a$ and $a_*$ are both disjoint from $\alpha$ and thus we can compare the corresponding 1-manifolds in $\Sigma_\alpha$. 

\item The monodromy $\mu$ is the unique map (up to isotopy) such that \[ \mu  (\varphi_\alpha(a))=\varphi_\alpha(a_*),\] respecting the ordering of the 1-manifolds. 
\end{enumerate}
\end{thm}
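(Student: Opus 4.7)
My plan is to adapt the proof of Theorem 5 of \cite{relative_trisections} so that the cut system $a$ is allowed to contain simple closed curves, which is exactly what is needed when $\partial \Sigma = \emptyset$ and the boundary of $X$ fibers over $S^1$ rather than supporting an open book with non-empty binding. The geometric input is the circular handle decomposition on $\partial X$ recorded in Subsection \ref{subsection_boundary_Z}: cyclically around $S^1$, one sees three copies of the pages $\Sigma_\alpha, \Sigma_\beta, \Sigma_\gamma$ (obtained by compressing $\Sigma$ along the respective curves) separated by the compression bodies $C_\beta, C_\gamma, C_\alpha$. When $\partial \Sigma \ne \emptyset$ the binding $B = \partial \Sigma$ is non-empty and this is an open book; when $\partial \Sigma = \emptyset$ all of $\partial X$ participates in the fibration. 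The monodromy is, by definition, the map on $\Sigma_\alpha$ obtained by traveling once around this $S^1$.

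The first step is to fix a cut system $a$ on $\Sigma$, disjoint from $\alpha$, whose image $\varphi_\alpha(a)$ cuts $\Sigma_\alpha$ into disks. I would then show that transporting $a$ across $C_\beta$ to land on $\Sigma_\beta$ corresponds, on the diagram side, to replacing $a$ by a slide-equivalent collection $a_1$ disjoint from a slide-equivalent collection $\beta'$. This is because traversing a compression body handle-by-handle deforms a 1-manifold on the positive boundary into a 1-manifold on the negative boundary via isotopies past 2-handle cocores, and such an isotopy is exactly what a handle slide records on the diagram side. The same reasoning handles the transitions through $C_\gamma$ and $C_\alpha$ (Steps 3 and 4). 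At each stage, the curves that play the role of the meridian system of the compression body being crossed (i.e.\ $\alpha$, then $\beta'$, then $\gamma'$) need not be slid over themselves; this is the content of the three ``we do not need to slide'' remarks in the statement.

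After the three traverses I have a pair $(\alpha', a_3)$ with $\alpha'$ slide-equivalent to $\alpha$. Step 5 asserts that this is in fact slide-equivalent to a pair of the form $(\alpha, a_*)$: because $\alpha'$ and $\alpha$ determine the same handlebody $H_\alpha$, one can undo the sequence of slides converting $\alpha$ into $\alpha'$ while correspondingly adjusting $a_3$ into $a_*$ disjoint from $\alpha$. For Step 6, because $\varphi_\alpha(a)$ fills $\Sigma_\alpha$, any self-homeomorphism of the page is determined up to isotopy by its image on this cut system; hence $\mu$ is the unique map (up to isotopy) with $\mu(\varphi_\alpha(a)) = \varphi_\alpha(a_*)$ respecting the ordering.

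The main obstacle I anticipate is checking that the slide-equivalence operations do not interact badly with the closed-curve components of $a$. For arcs, the argument in \cite{relative_trisections} is essentially combinatorial and local near each $\beta$ handle; for closed curves I need to verify that no component is ever destroyed, duplicated, or forced to intersect itself. I expect to resolve this by realizing the slides one at a time in a Morse-theoretic model of the compression body and noting that the homology class of each closed-curve component of $a$ in the relevant page is preserved throughout the traverse, since it is dual to the core of the corresponding cocompressing disk. Once this step is verified, the remainder of the argument is essentially formal and identical to the $b > 0$ case.
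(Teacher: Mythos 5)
Your high-level strategy — adapt the proof of Theorem 5 of \cite{relative_trisections} to allow the cut system $a$ to contain closed curves, with the geometric picture of transporting $a$ across the three compression bodies — is exactly the paper's approach, and you correctly locate the difficulty at the closed-curve components of $a$. Where you diverge is in how that difficulty is actually resolved. The paper's proof is a citation: it observes that the single point of failure is Lemma 13 of \cite{relative_trisections}, whose argument uses the disk $a\times[-1,1]$ (which is only a disk when $a$ is an arc), and that when $a$ is a closed curve one simply replaces that disk with the corresponding annulus $a\times[-1,1]$ and runs the same argument. That is a sharp, local modification of one lemma, not a global re-verification of the slide bookkeeping.

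Your proposed fix — realizing the slides in a Morse-theoretic model of the compression body and tracking that the homology class of each closed-curve component is preserved ``since it is dual to the core of the corresponding cocompressing disk'' — is both vaguer and not obviously sufficient. Preserving a homology class of a loop under slides does not by itself guarantee that the loop stays embedded, stays disjoint from the new meridian system, and that the slide sequence exists at each stage; those are exactly the things Lemma 13 establishes for arcs via the disk $a\times[-1,1]$, and they need the annulus version for loops. Also, the duality claim is suspect: a closed-curve component of a cut system for a closed page $\Sigma_\alpha$ is generally \emph{not} dual to a compressing disk of $C_\alpha$ (it is disjoint from $\alpha$ and descends to $\Sigma_\alpha$), so that sentence as stated does not ground the argument. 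I would recommend replacing that paragraph by explicitly pointing to Lemma 13 of \cite{relative_trisections}, noting that its proof uses the product $a\times[-1,1]$, and observing that when $a$ is a loop this product is an annulus rather than a disk and the isotopy/slide argument in the lemma goes through verbatim with the annulus. That pins down the one place where closedness of $\Sigma_\alpha$ actually matters and keeps the rest of the proof a word-for-word copy of the $b>0$ case.
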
 

\begin{proof} 
The proof is the same as in Theorem 5 on \cite{relative_trisections}. The only observation is that the proof of Lemma 13 of \cite{relative_trisections}, a key lemma for this result, does not apply when $\Sigma_\alpha$ is closed. This problem can be solved by considering the annulus $a\times [-1,1]$ for any loop in $P$ instead of the disk in the proof of Lemma 13. The proof then works. 
\end{proof}

\begin{examp}
Figures \ref{fig_monodromy_1}, \ref{fig_monodromy_2} and \ref{fig_monodromy_3} show how to run the algorithm for the monodromy in the concrete case of the trisection of the complement of the unknotted torus in $S^4$. 
\end{examp}

\begin{figure}[h]
\centering
\includegraphics[scale=.045]{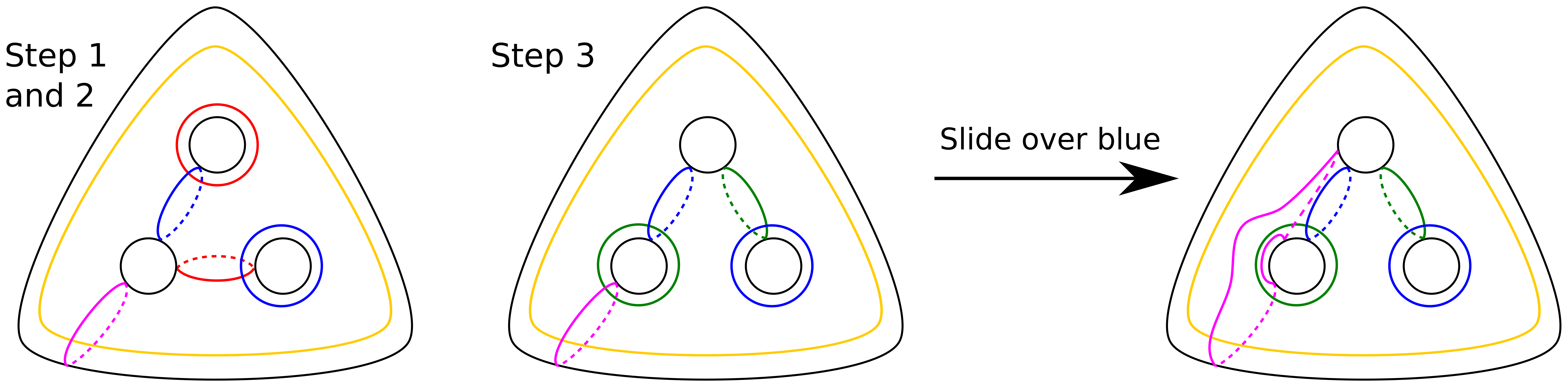}
\caption{Left: The outer pair of curves (yellow and pink) correspond to the 1-manifolds $a=a_1$ disjoint from the $\alpha$ (red) and $\beta$ (blue) loops. Right: After switching to the $\beta$, $\gamma$ (green) pair, we need to slide $a_1$ over $\beta$ to get 1-manifolds disjoint from $\gamma$, we call those $a_2$.}
\label{fig_monodromy_1}
\end{figure}

\begin{figure}[h]
\centering
\includegraphics[scale=.045]{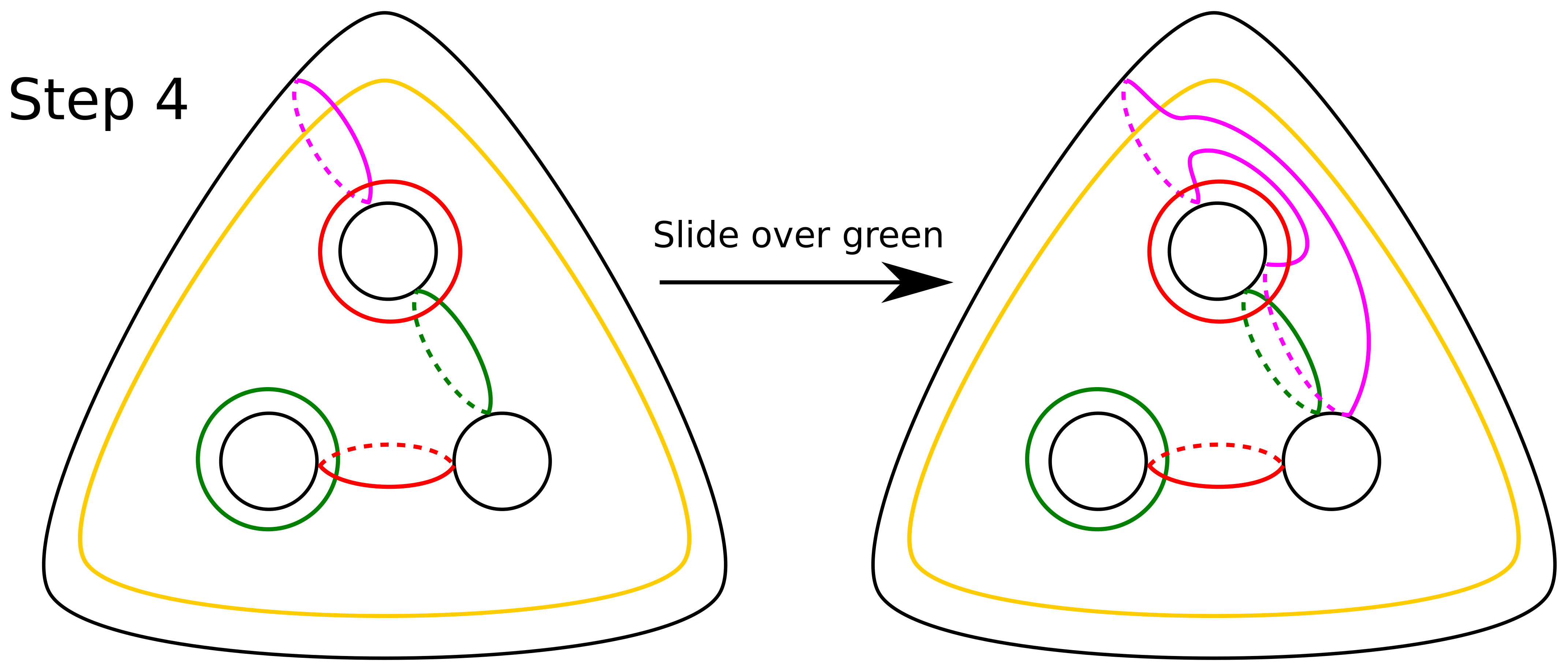}
\caption{After isotopy of $a_2$ and drawing now the pair $(\gamma,\alpha)$, we slide $a_2$ over $\gamma$ to get 1-manifolds disjoint from $\alpha$, we call those $a_3=a_*$.}
\label{fig_monodromy_2}
\end{figure}

\begin{figure}[h]
\centering
\includegraphics[scale=.045]{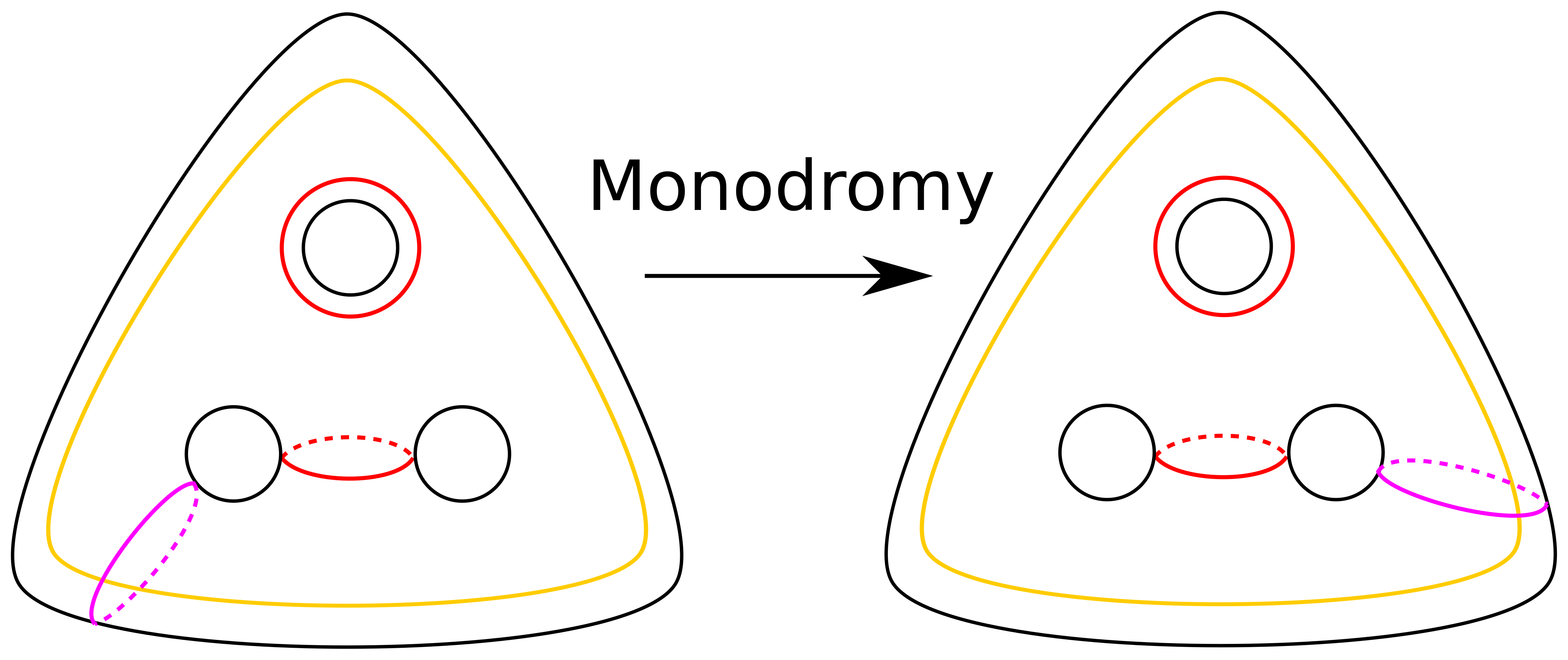}
\caption{The orientation preserving monodromy is defined in the torus obtained by compressing along the $\alpha$ loops and its determined by $a\mapsto a_*$. Notice that in this case we obtained the identity map, as expected.}
\label{fig_monodromy_3}
\end{figure}
\clearpage


\begin{flushright}
\texttt{email: jesse.moeller@huskers.unl.edu}\\
\texttt{email: jose-arandacuevas@uiowa.edu}
\end{flushright}
\end{document}